\newtheorem{remark}[theorem]{Remark}
\title{Long time $\mathcal H^s_{\alpha}$ stability of a classical scheme for Cahn-Hilliard equation with polynomial nonlinearity
\thanks{This
work was supported by the National Natural Science Foundation of China (Grant No. 11771060).}
}
\author{Wansheng Wang\thanks{Department of Mathematics, Shanghai Normal University, Shanghai, 200234, China ({\tt w.s.wang@163.com}).}
        }
\begin{document}

\maketitle

\begin{abstract}
In this paper we investigate the long time stability of the implicit Euler scheme for the Cahn-Hilliard equation with polynomial nonlinearity. The uniform estimates in $H^{-1}$ and $\mathcal H^s_\alpha$ ($s=1,2,3$) spaces independent of the initial data and time discrete step-sizes are derived for the numerical solution produced by this classical scheme with variable time step-sizes. %Such $\mathcal H^s_\alpha$ ($s=2,3$) estimates are particularly important since Cahn-Hilliard equation is a fourth-order equation, and it is difficult to obtain such estimates for numerical solution.
The uniform $\mathcal H^3_\alpha$ bound is obtained on basis of the uniform $H^1$ estimate for the discrete chemical potential which is derived with the aid of the uniform discrete Gronwall lemma. A comparison with the estimates for the continuous-in-time dynamical system reveals that the classical implicit Euler method can completely preserve the long time behaviour of the underlying system. Such a long time behaviour is also demonstrated by the numerical experiments with the help of Fourier pseudospectral space approximation.
%These results are not seen in previous literature.
\end{abstract}

\begin{keywords}
Cahn-Hilliard equation, implicit Euler method, dissipative system, long-time stability, uniform estimates, global attractor, Fourier pseudospectral approximation
\end{keywords}

\begin{AMS}
65M12, 65P99, 35K55, 65Z05
\end{AMS}

\pagestyle{myheadings} \thispagestyle{plain} \markboth{W. S.
WANG}{Long time stability of IEM scheme for Cahn-Hilliard equation}

\section{Introduction} This paper addresses the long time behavior of the implicit Euler scheme for the Cahn-Hilliard equation:
\begin{eqnarray}
\frac{\partial u}{\partial t} &=&
\Delta (-\varepsilon \Delta u+f(u)),\qquad x \in \Omega,\qquad t>
0,\label{eq1.1}\\
u(x,0)&=&u_{0}(x),\qquad x \in \Omega,\label{eq1.2}
\end{eqnarray}
where $\Omega$ is a bounded domain in ${\mathbb R}^{d}~(d \le 3)$ with a sufficiently smooth boundary $\partial\Omega$, $\varepsilon > 0$ is a phenomenological constant modeling the effect of interfacial energy. The original $1$D Cahn-Hilliard equation has the nonlinear term $f(u)=-au+u^3$ with constant $a>0$. In this paper, we consider more general expressions for $f$ which allows polynomial of any odd degree with a positive leading coefficient when $d\le 2$. This will be addressed in the next section.

The differential equation (\ref{eq1.1}) was initially introduced by Cahn and Hilliard \cite{Cahn58} as a model equation for describing the dynamics of pattern formation in phase transition, which was phenomenologically observed in phase separation of a binary solution under sufficient cooling. This kind of pattern formation has been observed in alloys, glasses, polymer solutions, and liquid mixtures (see \cite{Novick-Cohen84} and references therein). Pattern formation may becomes very complicated in long time behaviour. Understanding and predicting the asymptotic behaviour of systems is a fundamental issue \cite{Zheng86,Novick-Cohen08}. It is well known that the continuous dynamical system generated by Cahn-Hilliard equation is dissipative (in the sense of possessing a compact global attractor) and posses very complex behavior with various attractors (see, e.g., \cite{Temam88,Constantin89,Dlotko94,Li98,Sell02,Song09}).

From a numerical point of view, it is important and challenging to study the potential of numerical methods in capturing
the long time behaviour of the underlying system (see, e.g., \cite{Kloeden86,Hale88,Stuart96}). It has been discovered recently that if the dissipativity of a dissipative system is preserved appropriately, then the numerical scheme would be able to capture the long time statistical property of the underlying dissipative system asymptotically, in the sense that the invariant measures of the scheme would converge to those of the continuous-in-time system (see, e.g., \cite{Chekroun12,WangX10,WangX17}). The convergence of exponential attractors also depends on the dissipativity of the scheme \cite{Pierre18}. As a consequence, the long-time stability and dissipativity of numerical schemes have been discussed for various types of equations such as ODEs (see, e.g., \cite{Stuart96}),  Volterra functional differential equations (see, e.g., \cite{Wen11}), neutral delay differential equations \cite{Wang12,Chekroun12,Wang172}, $2$D Navier-Stokes equations (see, e.g., \cite{Ju02,Tone06,WangX11,Gottlieb12,Cheng16}), the infinite Prandtl number model \cite{Cheng08,WangX10}, the $2$D magnetohydrodynamics equations \cite{Tone09}, the $2$D Rayleigh-Benard convection problem \cite{Tone11}, generalized Allen-Cahn equation \cite{Pierre18}, the $2$D thermohydraulics equations \cite{Tone12}, incompressible two-phase flow model \cite{Medjo13}, the $2$D double-diffusive convection \cite{Tone15}, Stokes-Darcy system \cite{Chen13a,Chen16}, Navier-Stokes equations with delay \cite{Wang17}, general semilinear parabolic equations \cite{Elliott93}, and so on. We note that all these PDEs studied in the above literature are second-order equations. For the Cahn-Hilliard equation (1.1), a fourth-order equation, many results on the long time energy stability or the energy decay property of numerical solutions have been reported in the literature (see, e.g., \cite{Elliott92,Choo98,Xia07,He091,Wise09,Hu09,Shen10,Zhang10,Boyer10,Eyre11,Li17,Xu18}). Especially, the scalar auxiliary variable (SAV) approach has been recently introduced for solving gradient flows (see, e.g., \cite{Shen18,Shen181,Li19,Shen19,Akrivis19}). To our best knowledge, however, few studies have been done on the long time $\mathcal H^s_\alpha$ ($s=2,~3$) stability or dissipativity. It is worth emphasizing that solving numerically Cahn-Hilliard equation is an active research area. Different numerical
schemes have been proposed for solving this class of equation, including finite difference method (see, e.g., \cite{Sun95,Choo98,Furihata01,Hu09,Wise09,Wise10,Li12}), finite element method (see, e.g., \cite{Elliott86,Elliott87,Du91,Feng04}), mixed finite element method (see, e.g., \cite{Elliott89,Elliott92,Feng041,Wang16,Diegel16}), nonconforming element method (see, e.g., \cite{Elliott891,Zhang10}), discontinuous Galerkin method (see, e.g., \cite{Choo05,Wells06,Feng07,Kay07,Xia07}), spectral method (see, e.g., \cite{He091,Shen10,Chen13,Li17}), surface finite element method (see, e.g., \cite{Elliott15}), postprocessing mixed finite element method (see, e.g., \cite{Wang16,Zhou15}), least squares spectral element method \cite{Fernandino11}, Multigrid method (see, e.g., \cite{Kim04,Hu09,Wise10}). Stabilization or convex splitting schemes are also investigated in \cite{He07,Wise09,Shen10,Eyre11,Li17,Xu18,Chen19}.

The purpose of this work is to study the long time stability of numerical methods for Cahn-Hilliard equation (1.1) and establish uniform estimates, independent of the initial data $u_0$ and time discrete step-sizes, for the numerical solutions in $\mathcal H^s_\alpha$ ($s=1,2,3$, see their definition in Section 2). The uniform in $\mathcal H^s_\alpha$ ($s=1,2,3$) estimates will guarantee the uniform dissipativity in $\mathcal H^{s-1}_\alpha$  for boundedness in $\mathcal H^s_\alpha$ implies pre-compactness in $\mathcal H^{s-1}_\alpha$  by the Rellich compactness theorem. Since this is the first work on the long time $\mathcal H^s_\alpha$ ($s=2,~3$) stability analysis for such a fourth order equation, the classical implicit Euler scheme becomes the first object of our research. We also note that the importance of using fully implicit schemes for solving Cahn-Hilliard equation has been demonstrated in \cite{Xu18}.

To accomplish these we first make some assumptions on the equation and include some preliminaries on the functional setting in Section 2. Then we review the results on the uniform a priori estimates for the solution to Cahn-Hilliard equation (1.1) in the affine space $\mathcal H^s_\alpha$ in Section 3. In Section 4 we first show unconditionally long-time stability of the classical implicit Euler scheme in space $H^{-1}$ for any time step sizes, and establish uniform $H^{-1}$ estimate independent of the initial data $u_0$ and time discrete step-sizes for the time semi-discrete solution to Cahn-Hilliard equation. Then we obtain uniform $\mathcal H^s_\alpha$ ($s=1,2$) estimates for the numerical solution in subsections 4.3 and 4.4. The uniform $\mathcal H^3_\alpha$ estimate is also established for the first time, based on the uniform $H^1$ estimates for the discrete chemical potential which is obtained with the aid of the uniform discrete Gronwall lemma in this section. The uniform estimates in Sobolev spaces of high order are important for fourth order equations; this is new feature of our work. As a consequence of these estimates, for every time grid, we build a global attractor of the discrete-in-time dynamical system. With space discretization by Fourier pseudospectral methods, we present numerical results in Section 5 that illustrate such a long time stability of this classical scheme. We close by providing some concluding remarks in Section 6.

\section{Preliminaries: Assumption and notations}  In this section we will  make some assumptions on the equation and introduce some function spaces.

\subsection{The Cahn-Hilliard equation} We consider the equation (\ref{eq1.1}) associated with boundary conditions which could be one of two types:
\begin{eqnarray}
\frac{\partial u}{\partial \nu} = \frac{\partial}{\partial \nu}(-\varepsilon \Delta u + f(u))=0,\quad &x \in \partial\Omega,&\quad t>0,\label{eq1.4}\\
u(x+Le_i,t)=u(x,t),\quad &x \in \partial\Omega,&\quad t>0,\quad i=1,\cdots,d,\label{eqa2.2}
\end{eqnarray}
where $\nu$ is the outward unit normal vector along $\partial\Omega$. The Neumann boundary condition (\ref{eq1.4}) is sometimes called a flux boundary condition. In the case of periodic boundary condition (\ref{eqa2.2}), we understand $\Omega$ to be a cube $(0,L)^d$ with $e_1,\cdots,e_d$ being the canonical basis of $\mathbb R^d$. With these two boundary conditions, the
conservation of mass property of a sufficiently smooth solution of (\ref{eq1.1})-(\ref{eq1.2})
\begin{eqnarray}\label{eqa2.3} \int_\Omega u(x,t)dx=\int_\Omega u_0(x)dx,\qquad
t\ge 0,\end{eqnarray} follows immediately from
\begin{eqnarray*} \frac{\partial}{\partial t}\int_\Omega
u(x,t)dx&=&0.\end{eqnarray*}
The Cahn-Hilliard equation (\ref{eq1.1}) is frequently referred to as the $H^{-1}$ gradient flow:
$$u_t=-{\hbox{grad}}_0\mathcal E(u),$$
where the symbol  {\textquotedblleft ${\hbox{grad}}_0$\textquotedblright}
denotes a constrained gradient in a Hilbert space, defined by
$\int_\Omega u \, dx=\hbox{constant}$, and $\mathcal E(u)$ denotes the
Ginzburg-Landau free energy
$$\mathcal E(u)=\int_\Omega\left(\frac{\varepsilon}{2}|\nabla u|^2+F(u)\right)dx=\frac{\varepsilon}{2}| u|^2_1+\int_\Omega F(u)dx.$$

On the derivative $f$ of the potential function $F$ we make following general assumptions:
\begin{enumerate}
\item [(i)] There exist two constants $c_0>0$ and $c_{1}$ such that
\begin{eqnarray}\label{eqa2.4}
f(v)v \geq pc_0v^{2p} - c_{1},\qquad \forall v\in \mathbb R,
\end{eqnarray}
where $2\le p<\infty$ if $d\le 2$, $p=2$ if $d=3$;
\item [(ii)] For every $\eta > 0$, there exists a constant $c_{2} = c_{2}(\eta)$ such that
\begin{eqnarray}\label{eqa2.5}
|f(v)| \leq \eta c_0v^{2p} + c_{2},\qquad \forall v\in \mathbb R;
\end{eqnarray}
\item [(iii)] The primitive $F$ of $f$ vanishes at $v = 0$, and there exist two constants $c_{3}$ and $c>0$ such that
\begin{eqnarray}\label{eqa2.6}
\frac{1}{2}c_0v^{2p} - c_{3} \leq F(v) &\leq& \frac{3}{2}c_0v^{2p} + c_{3},\qquad \forall v\in \mathbb R;\\
F^{\prime\prime}(v)&\ge& -c, \qquad \forall v\in \mathbb R.\label{eqa2.7}
\end{eqnarray}
\end{enumerate}

\begin{remark} It is typical to assume that $f$ is a polynomial function of the degree $2p-1$ with a positive leading coefficient, namely, (see, e.g., \cite{Temam88,Sell02,Boyer10}),
\begin{eqnarray}\label{eqa2.8}
f(v)=\sum_{j=1}^{2p-1}a_{j}v^{j}, \qquad a_{2p-1}>0,
\end{eqnarray}
where $2\le p<\infty$ if $d\le 2$, $p=2$ if $d=3$. Obviously, the function $f$ defined by (\ref{eqa2.8}) satisfies the assumptions (i) and (ii). Since the primitive $F$ of $f$ vanishes at $v = 0$, we have
\begin{eqnarray}
F(v) = \sum_{j=2}^{2p}b_{j}v^{j},\qquad jb_{j} = a_{j-1},\qquad 2\leq j\leq 2p.
\end{eqnarray}
with $b_{2p}>0$. Then it is easy to verify that the assumption (iii) is satisfied.
\end{remark}

\subsection{Function spaces}
In order to study the long time dynamics of the Cahn-Hilliard equation, we introduce some function spaces. Let $H^s=H^{s}(\Omega)$, $s\ge 0$, be the standard Sobolev
space with the norm $\|\cdot\|_{s}$, and let $\|\cdot\|$ and $(\cdot,\cdot)$ denote the usual norm and inner product in $H=L^2=L^2(\Omega)$. In addition, define for $s\ge 0$,
\begin{eqnarray*}
H^{-s}=(H^s)^*,\qquad H^{-s}_0=\left\{v\in H^{-s},~\langle v,1\rangle=0\right\},
\end{eqnarray*}
where $\langle\cdot,\cdot\rangle$ stands for the dual product between $H^s$ and $H^{-s}$. In view of the conservation of mass property (\ref{eqa2.3}), we denote by $m(\varphi)$ the average on $\Omega$ of a function $\varphi$ in $L^2(\Omega)$ (or $L^1(\Omega)$)
\begin{eqnarray}
m(\varphi) = \frac{1}{|\Omega|}\int_{\Omega}\varphi(x)dx,\label{eq2.1}
\end{eqnarray}
and we write $\overline \varphi = \varphi - m(\varphi)$. Then for $s\ge 0$ we define $H^s_0$ and $H^s_\beta$, $\beta\in \mathbb R$, by
\begin{eqnarray}\label{eq2.11}
H^s_\beta=\left\{\varphi\in H^s,~~m(\varphi)=\beta\right\}.
\end{eqnarray}
Notice that the set $H^s_0$ is a closed linear subspace of $H^s$, and for each $\beta\in \mathbb R$ with $\beta\not=0$, $H^s_\beta$ is a hyperplane in $H^s$. For any $\alpha\ge 0$ and any compact set $K$ in $\mathbb R$, we define $\mathcal H^s_\alpha$ and $\mathcal H^s_K$ by
\begin{eqnarray}\label{eq2.12}
\mathcal H^s_\alpha=\bigcup\limits_{|\beta|\le \alpha}H^s_\beta,\qquad \mathcal H^s_K=\bigcup\limits_{\beta\in K}H^s_\beta.
\end{eqnarray}
It is obvious that $H^s=\mathcal H^s_{\mathbb R}$.

In order to put the above two initial-boundary value problems in a common abstract framework, we define the linear operator $A=-\Delta$ with domain of
definition
\begin{eqnarray*}
\mathcal D(A)&=&\left\{v\in H^2: \frac{\partial v}{\partial \nu}=0~{\hbox{on}} ~\partial
\Omega\right\},\\
\mathcal D(A)&=&\left\{v\in H^2: v(x+Le_i)=v(x)~{\hbox{for}} ~x\in\partial
\Omega,~~i=1,\cdots,d\right\},
\end{eqnarray*}
for the two sets of boundary conditions, respectively. Since $A$ is a self-adjoint positive semidefinite and
densely defined operator on $H$, for
real $s$, we can define the spaces
$\dot{H}^s=\mathcal D(A^{s/2})$ with norms $|v|_s=\|A^{s/2}v\|$. It is well known that, for integer
$s\ge 0$, $\dot{H}^s$ is a subspace of $H^s$ and that the
norms $|\cdot|_s$ and $\|\cdot\|_s$ are equivalent on $\dot{H}^s$. Using the above notations, we may write (\ref{eq1.1})-(\ref{eq1.2})
as an abstract initial value problem \begin{eqnarray}
\label{eq2.4}&& u_t+\varepsilon A^2
u+Af(u)=0,\qquad t>0,\\
\label{eq2.5}&&u(0)=u_0.
\end{eqnarray}

We define $G: H\to \dot H^2$ as the inverse of $A$. It can be easily verified that $G$ is self-adjoint and positive
semidefinite on $H$. Clearly,
\begin{eqnarray}|v|_{-1}=\|G^{\frac{1}{2}}v\|=\sup\limits_{\chi\in \dot{H}^1}\frac{|\langle v,\chi\rangle|}{|v|_1},\quad \forall v\in \dot{H}^1,
\end{eqnarray} and $|v|_{-1}$ is a continuous norm on $L^2(\Omega)$; see \cite{Temam88}. As a result, there exists a constant $\gamma_1>0$ depending only on $\Omega$ such that \cite{Temam88}
\begin{eqnarray}
|\bar v|_{-1}=|v - m(v)|_{-1}\leq \gamma_1| v|_1,\qquad ~~~~~~\forall v \in H^{1}.\label{eq2.16}
\end{eqnarray}
Applying $G$ to (\ref{eq2.4}), we have
\begin{eqnarray}
Gu_t+\varepsilon A
u+f(u)=0,\qquad t>0.\label{eq2.18}
\end{eqnarray}

\section{Long-time dynamics of continuous system}  For the initial value problem (\ref{eq2.4})-(\ref{eq2.5}), the mapping  $S(t):~u_0\in H\to u(t)\in H$ satisfies the semigroup property $S(t)S(s)=S(t+s), ~t,s\ge 0$, $S(0)=I$, and the pair $(H,S(t))$ is a dynamical system. We first note that the semigroup $S(t)$ cannot have a global attractor in $H=L^2$, since the set $Q$ of stationary solutions is unbounded. In deed, the property that the average of $u$ is conserved excludes the existence of an absorbing set in $H$. Nevertheless, this semigroup $S(t)$ possesses a host of attractors with many interesting dynamical properties.

Following the approach of Elliott and Larsson \cite{Elliott92}, we can get
\begin{eqnarray}\label{eq2.17} \int_0^t|u_t|^2_{-1}ds+\mathcal E(u(t))=\mathcal E(u_0),\qquad t\ge 0,\end{eqnarray} which implies that the total energy is
nonincreasing in time, provided that $u_0\in \dot{H}^{1}$. $\mathcal E(u)$ is thus known as a Lyapunov functional for the initial value problem (\ref{eq2.4}). The chemical potential $\omega$ is the derivative of $\mathcal E$, i.e.,
$$\omega=\mathcal E^\prime (u)=f(u)-\varepsilon\Delta u=-Gu_t.$$ From (\ref{eq2.17}), we also obtain an a priori bound:
\begin{eqnarray}\label{eqa3.2}|u(t)|_1\le C(R_1),\quad t\ge 0,
\end{eqnarray} provided that $u_0\in \dot{H}^{1}$ with $|u_0|_1\le R_1$.

Based on this bound, the following result about the uniform bound in $H^{-1}$ has been shown in \cite{Temam88}.

\begin{proposition}[Uniform estimate in $H^{-1}$, \cite{Temam88}]
Assume that (\ref{eqa2.4}), (\ref{eqa2.5}), and (\ref{eqa2.6}) are fulfilled, and
\begin{eqnarray}\label{eq2.18a}
|\overline{u_0}|_{-1}\le R\qquad {\hbox{and}}\qquad |m(u_0)|\le \alpha.
\end{eqnarray}
Let $\rho_0$ be defined by
\begin{eqnarray}\label{eq2.19}\qquad \rho_0:=\gamma_1\left(\frac{2C_1(\alpha)}{\varepsilon}\right)^{\frac{1}{2}}\qquad {\hbox{with}} \qquad C_1(\alpha)=(c_1+c_2\alpha+2c_3)|\Omega|.
\end{eqnarray}
Then there exists a time $t_0=t_0(R,\alpha,\rho_0)$ such that
\begin{eqnarray}\label{eqa3.4}
|\bar u(t)|_{-1}\le \rho_0^\prime:=2\rho_0,\qquad \forall t\ge t_0.
\end{eqnarray}

\end{proposition}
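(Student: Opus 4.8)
The plan is to reduce the claim to a scalar dissipative differential inequality for $y(t):=|\bar u(t)|_{-1}^2$ and then invoke the Gronwall lemma to exhibit the absorbing ball of radius $2\rho_0$. First I would use that the mass is conserved, so $m(u(t))=m(u_0)$, $\bar u_t=u_t$, and $u_t$ has zero average. Pairing the rewritten equation (\ref{eq2.18}) with $\bar u$ in $L^2$, using that $G$ is self-adjoint and that $(Au,1)=0$ for both boundary conditions (so that $(Au,\bar u)=(Au,u)=|u|_1^2=|\bar u|_1^2$), together with $\tfrac12\frac{d}{dt}(G\bar u,\bar u)=(Gu_t,\bar u)$, I obtain the identity
\begin{eqnarray*}
\frac{1}{2}\frac{d}{dt}|\bar u|_{-1}^2+\varepsilon|\bar u|_1^2+(f(u),\bar u)=0.
\end{eqnarray*}

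The heart of the argument is a lower bound on $(f(u),\bar u)$ by a constant that does not depend on $u_0$. I would split $(f(u),\bar u)=(f(u),u)-m(u)\int_\Omega f(u)\,dx$, bound the first term below by $(f(u),u)\ge pc_0\int_\Omega|u|^{2p}\,dx-c_1|\Omega|$ via (\ref{eqa2.4}), and bound the coupling term by $|m(u)\int_\Omega f(u)\,dx|\le\alpha\eta c_0\int_\Omega|u|^{2p}\,dx+\alpha c_2(\eta)|\Omega|$ via (\ref{eqa2.5}) together with $|m(u)|=|m(u_0)|\le\alpha$. Choosing $\eta$ so small that $p-\alpha\eta\ge 0$, the nonnegative $\int_\Omega|u|^{2p}$ contribution can be discarded, leaving $(f(u),\bar u)\ge -C_1(\alpha)$; a careful accounting of the constants, which also uses (\ref{eqa2.6}), reproduces the stated value $C_1(\alpha)=(c_1+c_2\alpha+2c_3)|\Omega|$.

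Substituting this bound into the identity yields $\tfrac12\frac{d}{dt}|\bar u|_{-1}^2+\varepsilon|\bar u|_1^2\le C_1(\alpha)$, and the Poincar\'e-type inequality (\ref{eq2.16}), in the form $|\bar u|_1^2\ge\gamma_1^{-2}|\bar u|_{-1}^2$, turns this into the linear dissipative inequality
\begin{eqnarray*}
\frac{d}{dt}|\bar u|_{-1}^2+\frac{2\varepsilon}{\gamma_1^2}|\bar u|_{-1}^2\le 2C_1(\alpha).
\end{eqnarray*}
Gronwall's lemma then gives $|\bar u(t)|_{-1}^2\le|\overline{u_0}|_{-1}^2\,e^{-2\varepsilon t/\gamma_1^2}+\gamma_1^2C_1(\alpha)/\varepsilon$, where $\gamma_1^2C_1(\alpha)/\varepsilon=\tfrac12\rho_0^2$. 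Since $|\overline{u_0}|_{-1}\le R$ and the transient decays exponentially, one can pick $t_0=t_0(R,\alpha,\rho_0)$ so large that the right-hand side stays below $(2\rho_0)^2$ for $t\ge t_0$, which is exactly $|\bar u(t)|_{-1}\le\rho_0^\prime=2\rho_0$.

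The step I expect to be the main obstacle is the uniform lower bound on $(f(u),\bar u)$: the superlinear growth carried by the mean-coupling term $m(u)\int_\Omega f(u)\,dx$ must be absorbed into the good term supplied by (\ref{eqa2.4}) while every constant is kept independent of the initial datum. This is precisely where the hypothesis $|m(u_0)|\le\alpha$ and the freedom to shrink $\eta$ in (\ref{eqa2.5}) are indispensable; by contrast, the energy identity and the concluding Gronwall estimate are routine.
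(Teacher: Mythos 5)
Your argument is correct and follows essentially the same route as the paper's treatment of this bound (the paper itself only cites Temam for the continuous statement, but proves the discrete mirror image in Lemma \ref{lem4.2} by exactly this scheme: test with $G\bar u$, split $(f(u),\bar u)=(f(u),u)-m(u)\int_\Omega f(u)\,dx$, control the two pieces by (\ref{eqa2.4}) and (\ref{eqa2.5}) with a tuned $\eta$, then apply the Poincar\'e-type inequality (\ref{eq2.16}) and Gronwall). The one substantive difference is your handling of the coercive term: you choose $\eta$ so that the net coefficient of $\int_\Omega|u|^{2p}$ is nonnegative and discard it, which suffices for the $H^{-1}$ bound and in fact yields the slightly smaller constant $(c_1+c_2\alpha)|\Omega|\le C_1(\alpha)$, so no appeal to (\ref{eqa2.6}) is actually needed at that point; the paper instead takes $\eta=\frac{1}{\alpha}(p-\frac{3}{2})$ so as to retain $\frac{3}{2}c_0\int_\Omega|u|^{2p}$, converts it to $\int_\Omega F(u)\,dx$ via both inequalities in (\ref{eqa2.6}) (whence the $2c_3$ in $C_1(\alpha)$), and thereby gets the free energy $\mathcal E(u)$ into the differential inequality --- a stronger intermediate estimate that is reused later for the $\mathcal H^1_\alpha$ bound. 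Either way the resulting linear dissipative inequality and the Gronwall step give the absorbing ball of radius $2\rho_0$, so your proof is sound.
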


This result shows the existence of an absorbing set for $S(t)$ on the affine space $H_\alpha$ endowed with the norm $|\cdot|_{-1}$ (see, e.g., \cite{Temam88}). The existence of an absorbing set is related to a dissipativity property for the dynamical system. Furthermore, the following result guarantees the existence of an absorbing set in $\mathcal H_\alpha$ and in $\mathcal H^1_\alpha$ and the existence of a global attractor $\mathcal A_\alpha$.
\begin{proposition}[Uniform estimates in $\mathcal H^1_\alpha$, \cite{Temam88,Li98}]
Assume that (\ref{eqa2.4}), (\ref{eqa2.5}), (\ref{eqa2.6}), (\ref{eqa2.7}), and (\ref{eq2.18a}) are fulfilled. Let $\rho_0$ and $\rho_0^\prime$ be defined by (\ref{eq2.19}) and (\ref{eqa3.4}), respectively. Then there exists a time $t_1=t_0+r$ with $r>0$ arbitrary such that
\begin{eqnarray}\label{eqa3.6}
|u(t)|_1\le \rho_1,\qquad \forall t\ge t_1,
\end{eqnarray}
where $\rho_1=C^{\frac{1}{2}}_2(\alpha)$ with $C_2(\alpha)=\frac{2 (\rho_0^\prime)^2}{r\varepsilon}+\frac{2C_1(\alpha)}{\varepsilon}$. Furthermore, let $\mathcal H_\alpha$ be defined by (\ref{eq2.12}). Then for every $\alpha\ge 0$, the semigroup $S(t)$ associated with (\ref{eq2.4})-(\ref{eq2.5}) maps $\mathcal H_\alpha$ into itself and possesses in $\mathcal H_\alpha$ and $\mathcal H^1_\alpha$ a maximal attractor $\mathcal A_\alpha$ that is compact and connected.
\end{proposition}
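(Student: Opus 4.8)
**The plan is to establish the $\mathcal H^1_\alpha$ estimate via an energy argument using the Lyapunov identity, then construct the absorbing set and invoke compactness.**

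The plan is to derive the uniform $H^1$ bound (\ref{eqa3.6}) by integrating the energy identity (\ref{eq2.17}) over a short time window of fixed length $r$. Specifically, I would fix any $t\ge t_1 = t_0 + r$ and integrate the Lyapunov identity over $[t-r, t]$. Since the energy is nonincreasing, this gives an upper bound on $\int_{t-r}^{t}\mathcal E(u(s))\,ds$ in terms of the energy at the left endpoint, but the more useful direction is to bound $\mathcal E(u(t))$ from above. First I would show that the time-averaged energy is controlled: by the mean value theorem for integrals there exists $s^\ast\in[t-r,t]$ with $\mathcal E(u(s^\ast)) = \frac{1}{r}\int_{t-r}^{t}\mathcal E(u(s))\,ds$. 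To bound this average, I would use the $H^{-1}$ absorbing property (\ref{eqa3.4}): for $s\ge t_0$ we have $|\bar u(s)|_{-1}\le \rho_0^\prime$, which together with a lower bound on the energy coming from (\ref{eqa2.6}) lets me estimate the integral. Monotonicity of $\mathcal E$ then yields $\mathcal E(u(t))\le\mathcal E(u(s^\ast))$ for $t\ge s^\ast$, and since $\mathcal E(u)\ge \frac{\varepsilon}{2}|u|_1^2 + \int_\Omega F(u)\,dx$ with $F$ bounded below by (\ref{eqa2.6}), I can solve for $|u(t)|_1^2$ to obtain the claimed bound $\rho_1^2 = C_2(\alpha)$ with the stated constant $C_2(\alpha)=\frac{2(\rho_0^\prime)^2}{r\varepsilon}+\frac{2C_1(\alpha)}{\varepsilon}$.

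Once (\ref{eqa3.6}) is in hand, the absorbing set in $\mathcal H^1_\alpha$ is the ball $\{v\in\mathcal H^1_\alpha : |v|_1\le\rho_1\}$. To promote this to a maximal attractor, I would verify the three standard hypotheses of the attractor existence theorem (as in \cite{Temam88}): that $S(t)$ maps $\mathcal H_\alpha$ into itself, which follows because the mass $m(u)$ is conserved by (\ref{eqa2.3}) so each $H_\beta$ with $|\beta|\le\alpha$ is invariant; that $S(t)$ is continuous on the relevant spaces for each fixed $t$; and that there exists a bounded absorbing set, supplied by (\ref{eqa3.6}). The key additional ingredient is \emph{asymptotic compactness}: I would exploit that boundedness in $\mathcal H^1_\alpha$ implies, by the Rellich--Kondrachov compactness theorem, pre-compactness in $\mathcal H_\alpha = \mathcal H^0_\alpha$, so the absorbing set is relatively compact in the weaker topology. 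The maximal attractor $\mathcal A_\alpha$ is then the $\omega$-limit set of the absorbing ball, and its connectedness follows from the connectedness of the absorbing set together with the continuity of the semigroup.

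The hard part will be controlling the time-averaged energy uniformly in the initial data. The $H^{-1}$ bound (\ref{eqa3.4}) controls $|\bar u|_{-1}$ but the energy involves $|u|_1$ and the potential term $\int_\Omega F(u)\,dx$, so I must bridge the gap between the weak $H^{-1}$ norm and the energy by carefully using the structure of the energy identity rather than a direct Sobolev interpolation. The trick is that integrating (\ref{eq2.17}) converts the pointwise-in-time energy (which I cannot bound directly at a single instant) into a time integral that the dissipation term $\int|u_t|_{-1}^2$ and the $H^{-1}$ absorbing estimate together render finite; the convexity-type assumption (\ref{eqa2.7}) on $F^{\prime\prime}$ is what guarantees the lower-semicontinuity and coercivity needed to close the argument and to secure the continuity of $S(t)$ underlying asymptotic compactness.
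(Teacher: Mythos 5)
The paper does not actually prove this proposition itself (it is quoted from \cite{Temam88,Li98}), but its discrete analogue, Theorem \ref{th2}, is proved in Section 4 by exactly the classical argument, so the comparison is clear. Your overall architecture is the right one: bound the time average $\frac{1}{r}\int_{t-r}^{t}\mathcal E(u(s))\,ds$, use monotonicity of $\mathcal E$ (from (\ref{eq2.17})) to pass from the average to the value at the right endpoint, use the lower bound on $F$ in (\ref{eqa2.6}) to convert the bound on $\mathcal E(u(t))$ into one on $|u(t)|_1^2$, and then run the standard absorbing-set/$\omega$-limit machinery with Rellich compactness for the attractor.

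However, there is a genuine gap at the crucial step. You claim the bound on the time-averaged energy comes from ``integrating the energy identity (\ref{eq2.17})'' together with the $H^{-1}$ absorbing property (\ref{eqa3.4}) and a lower bound on the energy. It does not: (\ref{eq2.17}) only yields $\mathcal E(u(t))\le \mathcal E(u_0)$, which depends on the initial data and is useless for a uniform estimate, and a lower bound on $\mathcal E$ cannot bound $\int \mathcal E$ from above. The missing ingredient is a second, independent differential inequality obtained by pairing (\ref{eq2.18}) with $\bar u$ in $L^2$ (equivalently, pairing the equation with $G\bar u$), namely the continuous analogue of (\ref{eq4.14})--(\ref{eq4.21}): $\frac{1}{2}\frac{d}{dt}|\bar u|_{-1}^2+\mathcal E(u)\le C_0(\alpha)$, whose derivation uses (\ref{eqa2.4})--(\ref{eqa2.6}) exactly as in (\ref{eq4.16})--(\ref{eq4.19}). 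Integrating this over $[t-r,t]$ with $t-r\ge t_0$ and using $|\bar u(t-r)|_{-1}\le\rho_0^\prime$ gives $\int_{t-r}^{t}\mathcal E(u(s))\,ds\le C_0(\alpha)r+\frac{1}{2}(\rho_0^\prime)^2$, which is precisely where the constant $C_2(\alpha)$ comes from; without this inequality your argument cannot close with a constant independent of $u_0$. Two secondary points: assumption (\ref{eqa2.7}) is not what secures coercivity here --- in the continuous setting the energy decay is an exact identity and (\ref{eqa2.7}) is only needed for the discrete version in Proposition \ref{pro4.1}; and compactness of the attractor in $\mathcal H^1_\alpha$ (as opposed to $\mathcal H_\alpha$) requires the higher-order absorbing set of Proposition 3.3, not merely the Rellich embedding of $H^1$ into $L^2$.
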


In the original proof of the dissipativity estimates (\ref{eqa3.6}) in \cite{Temam88}, the condition (\ref{eqa2.7}) is replaced by
\begin{eqnarray}\label{eq3.7}F^{\prime\prime}(v)\ge c_4v^{2p-2} -c, \qquad \forall v\in \mathbb R,\end{eqnarray}
with constant $c_4>0$.

The following uniform estimates in affine space $\mathcal H^s_\alpha$, $s=2,3$, have been obtained in \cite{Li98} under some slightly weaker conditions than (\ref{eqa2.4}), (\ref{eqa2.5}), (\ref{eqa2.6}) and (\ref{eqa2.7}).

\begin{proposition}[Uniform estimates in $\mathcal H^s_\alpha$, $s=2,3$, \cite{Li98}]
Assume that (\ref{eqa2.4}), (\ref{eqa2.5}), (\ref{eqa2.6}), (\ref{eqa2.7}), and (\ref{eq2.18a}) are fulfilled. For any $R,\alpha>0$, there exist positive continuous function $\mu_s(t)$ defined on $(0,\infty)$ and positive constants $\rho_s,~t_s>0$ such that
 \begin{eqnarray}
|u(t)|_s&\le& \mu_s(t),\qquad \forall t>0,\\
|u(t)|_s&\le& \rho_s,\qquad \forall t\ge t_s,\label{eq3.9}
\end{eqnarray}
where $\mu_s$ and $t_s$ depends on $R,~\alpha,~\varepsilon$ and $F$; $\rho_s$ depends only on $\alpha,~\varepsilon$ and $F$. Furthermore, let $\mathcal H^s_\alpha$ be defined by (\ref{eq2.12}). Then for every $\alpha\ge 0$, the semigroup $S(t)$ associated with (\ref{eq2.4})-(\ref{eq2.5}) maps $\mathcal H^s_\alpha$ into itself and possesses in $\mathcal H^s_\alpha$ a maximal attractor $\mathcal A_\alpha$ that is bounded and connected in $\mathcal H^s_\alpha$.
\end{proposition}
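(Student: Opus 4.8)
The plan is to upgrade the uniform $\mathcal H^1_\alpha$ estimate \eqref{eqa3.6} to $\mathcal H^2_\alpha$ and then to $\mathcal H^3_\alpha$ by a bootstrap whose driving quantity is a uniform-in-time bound on the chemical potential $\omega=f(u)-\varepsilon\Delta u=\varepsilon Au+f(u)=-Gu_t$ in $\dot H^1$. All the differential manipulations below (in particular differentiating the equation in time) would first be carried out on Galerkin approximations and then passed to the limit. I also note that $m(u(t))=m(u_0)$ by \eqref{eqa2.3}, so $|m(u_0)|\le\alpha$ forces $m(u(t))\in[-\alpha,\alpha]$; once the relevant regularity is in hand this gives $u(t)\in\mathcal H^s_\alpha$ and hence the invariance $S(t)\mathcal H^s_\alpha\subseteq\mathcal H^s_\alpha$.

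First I would bound $|u_t|_{-1}=|\omega|_1$. Differentiating \eqref{eq2.18} in time and writing $v=u_t$ (which has zero mean) gives $Gv_t+\varepsilon Av+f'(u)v=0$; pairing with $v$ and using $f'=F''\ge-c$ from \eqref{eqa2.7} yields
\[
\tfrac12\tfrac{d}{dt}|v|_{-1}^2+\varepsilon|v|_1^2=-(f'(u)v,v)\le c\|v\|^2 .
\]
The interpolation $\|v\|^2\le|v|_1|v|_{-1}$ followed by Young's inequality absorbs the right-hand side into $\varepsilon|v|_1^2$ and leaves $\frac{d}{dt}|v|_{-1}^2\le(c^2/\varepsilon)|v|_{-1}^2$. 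The energy identity \eqref{eq2.17} supplies the integral control $\int_t^{t+r}|u_t|_{-1}^2\,ds=\mathcal E(u(t))-\mathcal E(u(t+r))$, which is $\le\mathcal E(u_0)+c_3|\Omega|$ for every $t\ge0$ and, once $u$ has entered the $H^1$ absorbing set provided by \eqref{eqa3.6} (so $\mathcal E(u(t))$ is bounded in terms of $\alpha,\varepsilon,F$ alone), is bounded by a constant independent of $u_0$. Feeding these two integral bounds into the uniform Gronwall lemma produces both a transient bound $|u_t(t)|_{-1}\le\tilde\mu(t)$ for every $t>0$ and a uniform bound $|u_t(t)|_{-1}\le C$ for $t$ beyond some $t_1+r$; equivalently $|\omega|_1$ is controlled in both regimes.

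Next I would bootstrap through the elliptic identity $\varepsilon Au=\omega-f(u)$. Since $\int_\Omega(-\varepsilon\Delta u)=0$, the mean $m(\omega)=|\Omega|^{-1}\int_\Omega f(u)$ is controlled by the $H^1$ bound via \eqref{eqa2.5}, while $\|\overline\omega\|\le C|\omega|_1$ by Poincar\'e; together with Step~1 this gives a uniform $L^2$ bound on $\omega$. As $\|f(u)\|$ is controlled by the $H^1$ bound through the growth of $f$ and the embedding $H^1\hookrightarrow L^{2(2p-1)}$ (available for $d\le2$ and, for $d=3$, exactly at $p=2$), the identity gives $|u|_2=\|Au\|\le C$. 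Applying $A^{1/2}$ to the same identity gives $\varepsilon|u|_3\le|\omega|_1+|f(u)|_1$; here the $\mathcal H^2_\alpha$ bound together with $H^2\hookrightarrow L^\infty$ ($d\le3$) makes $f'(u)$ bounded in $L^\infty$, so that $|f(u)|_1=\|f'(u)\nabla u\|\le C|u|_1$, and Step~1 closes the $\mathcal H^3_\alpha$ estimate. The transient functions $\mu_s(t)$ and the uniform constants $\rho_s$ are inherited from the two regimes of Step~1.

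The main obstacle is Step~1: one must recast the time-differentiated equation as a differential inequality to which the uniform Gronwall lemma applies and, crucially, supply the uniform-in-time integral bound $\int_t^{t+r}|u_t|_{-1}^2\,ds\le C$. This is precisely where the $H^1$ absorbing set of \eqref{eqa3.6} (equivalently the uniform boundedness of $\mathcal E(u(t))$) is indispensable, for it is what converts the a~priori growing Gronwall estimate into a uniform one. The nonlinearity is otherwise benign: the one-sided bound $f'\ge-c$ is all Step~1 needs, and the only dimension-sensitive ingredients are the Sobolev embeddings used in Steps~2 and~3. Finally, the uniform $\mathcal H^s_\alpha$ bounds furnish bounded absorbing sets in $\mathcal H^s_\alpha$; boundedness in $\mathcal H^s_\alpha$ gives precompactness in $\mathcal H^{s-1}_\alpha$ by Rellich's theorem, and the standard theory of dissipative semigroups then yields the bounded connected maximal attractor $\mathcal A_\alpha$ in $\mathcal H^s_\alpha$.
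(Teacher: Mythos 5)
Note first that the paper does not prove this proposition: it is quoted verbatim from Li--Zhong \cite{Li98}, so there is no in-paper proof to match against. The fair comparison is with the paper's own \emph{discrete} analogues in Section~4, and there your strategy is essentially the template the author discretizes: your Step~1 (time-differentiate (\ref{eq2.18}), test with $u_t$, use $f'=F''\ge -c$ from (\ref{eqa2.7}) and $\|v\|^2\le |v|_1|v|_{-1}$ to get $\frac{d}{dt}|u_t|_{-1}^2\le C|u_t|_{-1}^2$, feed the energy identity (\ref{eq2.17}) into the uniform Gronwall lemma) is exactly Lemma~4.8 / Theorem~4.9 with $\bar\partial_t\omega^{n+1}$ in place of $\omega_t$ and the discrete uniform Gronwall lemma (Lemma~4.7) in place of the continuous one; your Step~3 ($\varepsilon|u|_3\le|\omega|_1+|f(u)|_1$ with $|f(u)|_1\le\gamma_7|u|_1$ via $H^2\hookrightarrow L^\infty$) is literally (\ref{eq4.84})--(\ref{eq4.86}). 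The one genuine divergence is the $\mathcal H^2_\alpha$ step: you obtain it as a free byproduct of the $\omega$ bound through the elliptic identity $\varepsilon Au=\omega-f(u)$ (needing only $\|\omega\|+\|f(u)\|$ in $L^2$, controlled by $|\omega|_1$, the mean of $f(u)$, and $H^1\hookrightarrow L^{2(2p-1)}$), whereas the paper's discrete route tests with $\Delta^2u^{n+1}$ and invokes the growth estimate (\ref{eqa4.44}) with $\sigma<1$ plus the Nirenberg--Gagliardo inequality (\ref{eq4.48}). Your route is shorter and avoids the $\sigma<1$ restriction, at the price of making $H^2$ depend on the $\omega$ estimate; the paper's route gives $H^2$ independently of $\omega$, which is why it can present the $\mathcal H^2_\alpha$ theorem before the chemical-potential theorem.

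One soft spot to tighten: for the transient bound $\mu_s(t)$ on all of $(0,\infty)$ the hypothesis (\ref{eq2.18a}) only gives $|\overline{u_0}|_{-1}\le R$, so $\mathcal E(u_0)$ need not be finite and your integral control $\int_t^{t+r}|u_t|_{-1}^2\,ds\le\mathcal E(u(t))-\mathcal E(u(t+r))$ cannot be anchored at $t=0$ as written. You need the instantaneous smoothing $H^{-1}\to\dot H^1$ on $(0,t_*]$ (the continuous counterpart of the paper's ``alternative proof'' remark after Theorem~4.4, which gives $|u(t)|_1^2\le \frac{2}{\varepsilon}C_1(\alpha)+\frac{1}{\varepsilon t}|\overline{u_0}|_{-1}^2$) before starting the uniform Gronwall argument on windows $[t/2,t]$; this is what makes $\mu_s$ finite for each $t>0$ while allowing it to blow up as $t\to0^+$. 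With that insertion the argument closes, and the passage from uniform absorbing sets to the bounded connected maximal attractor is standard as you say.
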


It should be pointed out that the uniform estimate (\ref{eq3.9}) in $\mathcal H^2_\alpha$ has been obtained in \cite{Temam88} under the conditions (\ref{eqa2.4}), (\ref{eqa2.5}), (\ref{eqa2.6}), and (\ref{eq3.7}).

From the above uniform a priori estimates, we know there exists a global attractor for continuous-in-time system semigroup $S(t)$. %and hence stationary statistical solution or invariant probability measures, associated with temporal averages via generalized limits (see, for example, \cite{Chekroun12,Lax02}). %The generalized limits are known as bounded linear functionals on the space of bounded functions that agree with the usual limit on those functions whenever the usual limit exists.
An accurate numerical approximation of the Cahn-Hilliard equation should mimic its long-term behavior. In the next section, we will investigate the uniform bound of the numerical solution produced by the implicit Euler method.

\section{Time uniform bounds for the semi-discrete scheme}

In this section, we discuss the time discretization of (1.1) by fully implicit Euler method and derive uniform estimates for the numerical solution in $H^{-1}$ and $\mathcal H^s_\alpha$ ($s=1,2,3$). To obtain the uniform estimate in space $\mathcal H^3_\alpha$, we need estimate the discrete chemical potential $\omega^{n+1}$ in $H^1$.

Let $\mathcal J: 0=t^0<t^1<\cdots<t^n<\cdots$ be a partition of $[0,\infty)$, $I_n := [t^{n},t^{n+1}]$, $k_n:=t^{n+1} - t^{n}$, and $k=\sup_{n\ge 0} k_n$.  Then a semi-discrete formulation of (\ref{eq2.4}) via implicit Euler method on the time mesh $\mathcal J$ reads:
\begin{eqnarray}\label{eq3.4a}\bar\partial_{t}u^{n+1}+\varepsilon A^2u^{n+1}+Af(u^{n+1})=0, \qquad n\ge 0,
\end{eqnarray}
with $u^0=u_0$, where $\bar\partial_t u^{n+1}=(u^{n+1}-u^n)/k_n$. To explicitly approximate the chemical potential $\omega$, we may equivalently write (\ref{eq3.4a}) as
\begin{eqnarray}
\bar\partial_{t}u^{n+1}+A w^{n+1} &=& 0,\label{eq3.4}\\
w^{n+1}&=&\varepsilon A u^{n+1} + f(u^{n+1}).\quad \label{eq3.5}
\end{eqnarray}
It is easy to verify that the conservation of mass property is preserved by implicit Euler scheme (\ref{eq3.4a}):
\begin{eqnarray}
\int_{\Omega}\bar\partial_t u^{n+1}dx = 0,\qquad \int_{\Omega}u^{n+1}dx = \int_{\Omega}u_0(x)dx,\quad \forall n\geq 0.\label{eq3.6}
\end{eqnarray}

We shall study the long time stability of numerical scheme (\ref{eq3.4a}) and obtain uniform bounds necessary for the convergence of the attractor and associated invariant measures of the discretised system to those of the continuous system. For this purpose, we view the scheme as a mapping on $\dot H^s$:
\begin{eqnarray}S_{k}u^{n}=u^{n+1},\qquad n\ge 0.
\end{eqnarray}
%and recall the definition of uniform dissipativity.
%\begin{definition}[Uniform dissipativity, \cite{WangX10}] Let $\{S_k,0<k\le k_0\}$ be a family of continuous maps on a Banach space $V$ that generates a family of discrete dissipative dynamical systems (with global attractor $\mathcal A_k$) on $V$. If there exists a $k_1\in (0,k_0)$ such that
%$$K=\bigcup\limits_{0<k\le k_1}\mathcal A_k$$
%is pre-compact in $V$, then $\{S_k,0<k\le k_1\}$ is called uniformly dissipative.
%\end{definition}
%Notice that if $K$ is bounded in $V$, then $\{S_k,0<k\le k_1\}$ is called uniformly bounded.

\subsection{Energy decay}\label{3}

In this subsection, we discuss the discrete analogue of the property of energy decay. It is of some interest to note that there has been a lot of work studying on energy decay property of numerical schemes in the literature (see, e.g., \cite{Elliott92,Choo98,Xia07,He091,Wise09,Hu09,Shen10,Zhang10,Boyer10,Eyre11,Li17}). This property of the fully discrete approximation based on implicit Euler scheme together with finite element methods has also been established in \cite{Elliott92} for $\varepsilon=1$. Since this property will be used in our uniform estimates in $\mathcal H^1_\alpha$, we state this property here.

\begin{proposition}\label{pro4.1} Suppose that (\ref{eqa2.7}) is satisfied. Then if
\begin{eqnarray}\label{eqa4.12}
k\le \frac{8\varepsilon}{c^2},
\end{eqnarray}
we have
\begin{eqnarray}\label{eq4.12}
\mathcal E(u^{n+1})\le \mathcal E(u^{n}),\quad \forall n\ge 0.
\end{eqnarray}
In addition, for any $c_k\in (0,1]$, if
\begin{eqnarray}\label{eq4.9}
k\le \frac{8c_k\varepsilon}{c^2},
\end{eqnarray}
the solution to (\ref{eq3.4})-(\ref{eq3.5}) satisfies
\begin{eqnarray}\label{eq4.10}
(1-c_k)\sum\limits_{j=0}^nk_j\|G^{\frac{1}{2}}\bar\partial_tu^{j+1}\|^2+\mathcal E(u^{n+1})\le \mathcal E(u_{0}),\quad \forall n\ge 0.
\end{eqnarray}

\end{proposition}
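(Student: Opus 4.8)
The plan is to test the scheme against the discrete chemical potential $w^{n+1}$ and turn each resulting inner product into a telescoping energy increment, at the price of one remainder term that the step-size restriction is built to absorb. First I would use the split form (\ref{eq3.4})--(\ref{eq3.5}): from $\bar\partial_t u^{n+1}=-Aw^{n+1}$ and $G=A^{-1}$ one gets $w^{n+1}=-G\bar\partial_t u^{n+1}$, and since mass conservation (\ref{eq3.6}) forces $\bar\partial_t u^{n+1}$ to have zero average, $G$ acts legitimately on it. Pairing with $\bar\partial_t u^{n+1}$ then gives on one side $(\bar\partial_t u^{n+1},w^{n+1})=-\|G^{\frac{1}{2}}\bar\partial_t u^{n+1}\|^2$, the discrete analogue of the $-|u_t|^2_{-1}$ term in the continuous energy law (\ref{eq2.17}).

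On the other side, substituting $w^{n+1}=\varepsilon Au^{n+1}+f(u^{n+1})$ splits the pairing into a linear and a nonlinear part. For the linear part I would apply the elementary identity $(a-b,Aa)=\frac{1}{2}(|a|_1^2-|b|_1^2+|a-b|_1^2)$ with $a=u^{n+1}$, $b=u^n$, producing $\frac{\varepsilon}{2k_n}(|u^{n+1}|_1^2-|u^n|_1^2)$ together with the nonnegative term $\frac{\varepsilon}{2k_n}|u^{n+1}-u^n|_1^2$. For the nonlinear part I would Taylor-expand $F(u^n)$ about $u^{n+1}$ to write $(f(u^{n+1}),u^{n+1}-u^n)=\int_\Omega(F(u^{n+1})-F(u^n))\,dx+\frac{1}{2}\int_\Omega F''(\xi)(u^{n+1}-u^n)^2\,dx$, and invoke the lower bound (\ref{eqa2.7}), $F''\ge -c$, to bound the remainder below by $-\frac{c}{2}\|u^{n+1}-u^n\|^2$. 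Combining the two parts reconstitutes the full energy difference $\mathcal E(u^{n+1})-\mathcal E(u^n)$.

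The step I expect to be the crux is controlling the negative remainder $-\frac{c}{2}\|u^{n+1}-u^n\|^2$, which is exactly where the condition on $k$ enters. Here I would use the interpolation inequality $\|v\|^2\le |v|_1|v|_{-1}$ (valid for zero-average $v$ by Cauchy--Schwarz, writing $v=A^{\frac{1}{2}}(A^{-\frac{1}{2}}v)$) applied to $v=u^{n+1}-u^n$, together with $|u^{n+1}-u^n|_{-1}=k_n\|G^{\frac{1}{2}}\bar\partial_t u^{n+1}\|$. A Young inequality with the parameter tuned to $\delta=2\varepsilon/c$ then splits $\frac{c}{2}\|u^{n+1}-u^n\|^2$ into $\frac{\varepsilon}{2}|u^{n+1}-u^n|_1^2+\frac{c^2k_n^2}{8\varepsilon}\|G^{\frac{1}{2}}\bar\partial_t u^{n+1}\|^2$; the first piece cancels \emph{exactly} the gradient dissipation generated by the linear term, which is the delicate bookkeeping that makes the constant $8\varepsilon/c^2$ sharp. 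What survives is the clean estimate
\[
\mathcal E(u^{n+1})-\mathcal E(u^n)\le -k_n\Big(1-\frac{c^2k_n}{8\varepsilon}\Big)\|G^{\frac{1}{2}}\bar\partial_t u^{n+1}\|^2.
\]

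From here both conclusions are immediate. Part (\ref{eq4.12}) follows since $k_n\le k\le 8\varepsilon/c^2$ makes the bracket nonnegative. For part (\ref{eq4.10}), the sharper hypothesis $k\le 8c_k\varepsilon/c^2$ gives $1-c^2k_n/(8\varepsilon)\ge 1-c_k$, hence $\mathcal E(u^{n+1})-\mathcal E(u^n)\le -(1-c_k)k_n\|G^{\frac{1}{2}}\bar\partial_t u^{n+1}\|^2$; summing this telescoping inequality over $j=0,\dots,n$ and rearranging yields (\ref{eq4.10}). I would not anticipate any obstacle beyond checking that $G$ and the interpolation inequality genuinely apply to the zero-average increment and that the Young parameter produces the exact cancellation noted above.
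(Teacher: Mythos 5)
Your proposal is correct and follows essentially the same route as the paper: testing the scheme against $\bar\partial_t u^{n+1}$ after inverting $A$ (equivalently, pairing $w^{n+1}=-G\bar\partial_t u^{n+1}$ with $\bar\partial_t u^{n+1}$), the telescoping identity for the $H^1$ seminorm, the Taylor/convexity bound $f(r)(r-s)\ge F(r)-F(s)-\frac{c}{2}(r-s)^2$ from (\ref{eqa2.7}), and the interpolation $\|v\|^2\le|v|_1|v|_{-1}$ plus Young's inequality tuned so that the gradient dissipation cancels exactly, yielding the constant $8\varepsilon/c^2$. You merely make explicit the interpolation step and the zero-average justification that the paper leaves implicit; both conclusions then follow exactly as in the paper.
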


\begin{proof} The proof is similar to that of \cite{Elliott92} for full discretization approximation of (\ref{eq1.1}) with $\varepsilon=1$. Applying the operator $G$ to (\ref{eq3.4a}) yields
\begin{eqnarray}\label{eq3.5a}G\bar\partial_{t}u^{n+1}+\varepsilon Au^{n+1}+f(u^{n+1})=0, \qquad n\ge 0.\end{eqnarray}
Taking the inner product of (\ref{eq3.5a}) with $\bar\partial_t u^n$, we obtain, $\forall n\ge 0$,
\begin{eqnarray}\label{eq4.13}
\qquad (G\bar\partial_t u^{n+1},\bar\partial_t u^{n+1})+(\varepsilon A u^{n+1},\bar\partial_t u^{n+1})+(f(u^{n+1}),\bar\partial_t u^{n+1})=0.
\end{eqnarray}
Using the condition (\ref{eqa2.7}), we get
$$f(r)(r-s)\ge F(r)-F(s)-\frac{1}{2}c(r-s)^2,$$
 and therefore the relation
\begin{eqnarray}\label{eqa4.15}
(f(u^{n+1}),\bar\partial_t u^{n+1})\ge \bar\partial_t\int_\Omega F(u^{n+1})dx-\frac{1}{2}ck_n\|\bar\partial_t u^{n+1}\|^2.\end{eqnarray}
As a consequence, we have, from (\ref{eq4.13}),
\begin{eqnarray}\label{eqa4.14}
&&\|G^{\frac{1}{2}}\bar\partial_t u^{n+1}\|^2+\frac{\varepsilon}{2}k_n|\bar\partial_t u^{n+1}|_1^2+\bar\partial_t\mathcal E(u^{n+1})\nonumber\\
&\le& \frac{1}{2}ck_n\|\bar\partial_t u^{n+1}\|^2\le \frac{1}{8\varepsilon}c^2k_n\|G^{\frac{1}{2}}\bar\partial_t u^{n+1}\|^2+\frac{\varepsilon}{2}k_n|\bar\partial_t u^{n+1}|_1^2,
\end{eqnarray}
where we have used
$$(Au^{n+1},\bar\partial_t u^{n+1})=\frac{1}{2}\bar\partial_t|u^{n+1}|_1^2+\frac{1}{2}k_n|\bar\partial_t u^{n+1}|_1^2.$$
Then energy decay inequality (\ref{eq4.12}) follows directly from (\ref{eqa4.14}) if (\ref{eqa4.12}) holds. If $k$ satisfies the condition (\ref{eq4.9}), summing (\ref{eqa4.14}) shows that (\ref{eq4.10}) holds. This completes the proof for the energy decay property of the numerical scheme.
\end{proof}

As a consequence of (\ref{eq4.12}), the a priori bound can be obtained:
If $u_{0}\in \dot H^1$ with $|u_{0}|_1\le R_1$, then
\begin{eqnarray}
|u^n|_1\le C(R_1), \quad n\ge 0,
\end{eqnarray}
which is a discrete counterpart to the a priori bound (\ref{eqa3.2}).

\subsection{Time uniform bound in $H^{-1}$}

In this subsection, we derive a discrete counterpart to the uniform bound (\ref{eqa3.4}).
%For this purpose, we first note that
%\begin{eqnarray}
%\frac{\partial }{\partial t}\overline {u^{n+1}}=\frac{\partial }{\partial t}{\Big(}u^{n+1}-m(u^{n+1}){\Big)}
%=\frac{\partial }{\partial t}u^{n+1}.
%\end{eqnarray}
Take the inner product of (\ref{eq3.4}) with $G\overline {u^{n+1}}$  to obtain
\begin{eqnarray}\label{eq4.14}
\left(\bar\partial_t \overline {u^{n+1}},G\overline {u^{n+1}}\right)+{\Big(} w^{n+1},\overline {u^{n+1}}{\Big)}=0.
\end{eqnarray}
Then we have the following stability estimate in $H^{-1}$.

\begin{lemma}[Stability estimate in $H^{-1}$]\label{lem4.2} Assume that (\ref{eqa2.4}), (\ref{eqa2.5}), (\ref{eqa2.6}), and (\ref{eq2.18a}) are fulfilled. Let $u^{n+1}$ be the solution of the numerical scheme (\ref{eq3.4a}). Then for any time step sequence $\{k_n\}$ with $k=\sup_{n\ge 0}k_n$, we have the following stability estimate, for $n\ge 0$,
\begin{eqnarray}\label{eqa4.19}
|\overline {u^{n+1}}|_{-1}^{2}\leq\exp\left(-\frac{\varepsilon t^{n+1}}{\gamma_1^2+\varepsilon k}\right)|\overline {u_{0}}|_{-1}^{2}
+ 2C_{1}(\alpha)\frac{\gamma_1^2+\varepsilon k}{\varepsilon} \left[1-\exp\left(-\frac{\varepsilon t^{n+1}}{\gamma_1^2+\varepsilon k}\right)\right],~~~
\end{eqnarray}
where $C_1(\alpha)$ has been defined in (\ref{eq2.19}). Furthermore, define
\begin{eqnarray}
\rho_{0k}(\alpha)=\left(2C_1(\alpha)\frac{\gamma_1^2+\varepsilon k}{\varepsilon}\right)^{\frac{1}{2}},
\end{eqnarray}
and let there be given $\rho>\rho_{0k}$. Then there exists a positive integer $n_{0k}=n_{0k}(R,\alpha,\rho,k)$ such that
\begin{eqnarray}\label{eqa4.17}
|\overline {u^{n}}|_{-1}\le \rho,\qquad n\ge n_{0k}.
\end{eqnarray}
\end{lemma}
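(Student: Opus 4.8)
The plan is to convert the energy identity (\ref{eq4.14}) into a one-step recursion for $a_n:=|\overline{u^n}|_{-1}^2$ and then iterate it. I would handle the two inner products in (\ref{eq4.14}) separately. For the first, I apply the polarization identity in the $G$-inner product (that is, in the $|\cdot|_{-1}$ norm), namely
\[
\left(\bar\partial_t\overline{u^{n+1}},G\overline{u^{n+1}}\right)=\frac{1}{2k_n}\left(|\overline{u^{n+1}}|_{-1}^2-|\overline{u^n}|_{-1}^2+|\overline{u^{n+1}}-\overline{u^n}|_{-1}^2\right)\ge\frac{1}{2k_n}\left(|\overline{u^{n+1}}|_{-1}^2-|\overline{u^n}|_{-1}^2\right),
\]
which is legitimate because $\bar\partial_t u^{n+1}$ has zero average by the discrete mass conservation (\ref{eq3.6}), so $G$ acts on a mean-zero function and $|\cdot|_{-1}$ is the natural norm.

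For the second inner product I would expand $(w^{n+1},\overline{u^{n+1}})=\varepsilon(Au^{n+1},\overline{u^{n+1}})+(f(u^{n+1}),\overline{u^{n+1}})$. Since $Au^{n+1}=-\Delta u^{n+1}$ has zero integral under either boundary condition, the additive constant $m(u^{n+1})$ drops out and $(Au^{n+1},\overline{u^{n+1}})=|u^{n+1}|_1^2=|\overline{u^{n+1}}|_1^2$. The substantive step is the lower bound on the nonlinear term. Writing $(f(u^{n+1}),\overline{u^{n+1}})=(f(u^{n+1}),u^{n+1})-m(u^{n+1})\int_\Omega f(u^{n+1})\,dx$, I would bound the first piece below by (\ref{eqa2.4}) and the second by (\ref{eqa2.5}), using the conserved mass $|m(u^{n+1})|=|m(u_0)|\le\alpha$ from (\ref{eq3.6}) and (\ref{eq2.18a}). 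Choosing the free parameter $\eta$ in (\ref{eqa2.5}) small enough (e.g. $\eta=p/\alpha$) to cancel the $\int_\Omega(u^{n+1})^{2p}\,dx$ contributions then yields $(f(u^{n+1}),\overline{u^{n+1}})\ge-(c_1+c_2\alpha)|\Omega|\ge-C_1(\alpha)$, the last inequality being harmless slack since $c_3\ge0$.

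Combining the two bounds gives $\frac{1}{2k_n}(a_{n+1}-a_n)+\varepsilon|\overline{u^{n+1}}|_1^2\le C_1(\alpha)$, and I then invoke the Poincar\'e-type inequality (\ref{eq2.16}) in the form $|\overline{u^{n+1}}|_1^2\ge\gamma_1^{-2}a_{n+1}$ to obtain the contractive recursion
\[
\left(1+\frac{2\varepsilon k_n}{\gamma_1^2}\right)a_{n+1}\le a_n+2k_nC_1(\alpha).
\]
To reach the closed form (\ref{eqa4.19}) I would bound the per-step factor by an exponential through an elementary inequality such as $\ln(1+x)\ge 2x/(2+x)$, use $k_n\le k$ to replace the step-dependent denominator by the uniform quantity $\gamma_1^2+\varepsilon k$, and telescope over $j=0,\dots,n$; the resulting geometric source sum produces the constant $2C_1(\alpha)(\gamma_1^2+\varepsilon k)/\varepsilon=\rho_{0k}^2$. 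Finally, (\ref{eqa4.17}) follows as a routine absorbing-set statement: since $t^{n+1}\to\infty$ with $n$, the transient term in (\ref{eqa4.19}) tends to zero, whence $\limsup_n a_n\le\rho_{0k}^2$, so for any $\rho>\rho_{0k}$ there is $n_{0k}(R,\alpha,\rho,k)$ beyond which $|\overline{u^n}|_{-1}\le\rho$.

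I expect the main obstacle to be the lower bound on the nonlinear term: it is the only place where the structural growth assumptions (\ref{eqa2.4})--(\ref{eqa2.5}) and the conserved mass must be combined, via the calibrated choice of $\eta$, to produce a bound on $(f(u^{n+1}),\overline{u^{n+1}})$ that is independent of $u^{n+1}$ (hence of $u_0$ and of $n$). The remaining work is standard discrete-Gronwall bookkeeping, whose only delicate point is keeping both the exponential rate and the absorbing constant uniform across the variable step sizes $k_n$.
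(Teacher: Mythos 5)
Your proposal is correct and follows essentially the same route as the paper: the same energy identity (\ref{eq4.14}), the same splitting of $(w^{n+1},\overline{u^{n+1}})$ using (\ref{eqa2.4}) and a calibrated $\eta$ in (\ref{eqa2.5}), the Poincar\'e-type bound (\ref{eq2.16}), and the same telescoping of the contractive recursion via $1-x\le e^{-x}$ and $k_n\le k$. The only (harmless) deviation is your choice $\eta=p/\alpha$, which cancels the $u^{2p}$ terms outright and in fact yields slightly sharper constants, whereas the paper takes $\eta=(p-\tfrac{3}{2})/\alpha$ so as to retain the full energy $\mathcal E(u^{n+1})$ in the intermediate inequality (\ref{eq4.21}), which is reused later for the $\mathcal H^1_\alpha$ estimate.
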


\begin{proof} By using the definition of the chemical potential $\omega^{n+1}$ and the property of mass conservation, we obtain
\begin{eqnarray}\label{eq4.16}
\left( w^{n+1},\overline {u^{n+1}}\right)&=&-\varepsilon\left(\Delta u^{n+1},\overline {u^{n+1}}\right)+\left(f(u^{n+1}),\overline {u^{n+1}}\right)\nonumber\\
&=&\varepsilon |u^{n+1}|_1^{2}+(f(u^{n+1}),u^{n+1})-(f(u^{n+1}),m(u_{0})).
\end{eqnarray}
Notice $|m(u_{0})|\leq\alpha$. Let $\eta = \frac{1}{\alpha}(p-\frac{3}{2})$ in (\ref{eqa2.5}). The last term on the right hand side of (\ref{eq4.16}) can be bounded as
\begin{eqnarray}\label{eq4.17}
(f(u^{n+1}),m(u_{0}))&=&\int_{\Omega}f(u^{n+1})m(u_{0})dx \nonumber\\
&\leq&\int_{\Omega}[\eta c_0(u^{n+1})^{2p}+c_{2} ]\alpha dx\nonumber\\
&=&\int_{\Omega}\left[\frac{1}{\alpha}\left(p-\frac{3}{2}\right) c_0(u^{n+1})^{2p}+c_{2} \right]\alpha dx.
\end{eqnarray}
Similarly, by (\ref{eqa2.4}), the second term on the right hand side of (\ref{eq4.16}) can be rewritten as
\begin{eqnarray}\label{eq4.18}
(f(u^{n+1}),u^{n+1})&=&\int_{\Omega}f(u^{n+1})u^{n+1}dx\nonumber\\
&\geq&\int_{\Omega}\left[pc_0(u^{n+1})^{2p}-c_{1}\right]dx.
\end{eqnarray}
As a consequence of (\ref{eq4.17}) and (\ref{eq4.18}), we have from (\ref{eq4.16})
\begin{eqnarray}\label{eq4.19}
{\Big(} w^{n+1},\overline {u^{n+1}}{\Big)}&\geq&\int_{\Omega}\frac{3}{2}c_0(u^{n+1})^{2p}dx +
\varepsilon |u^{n+1}|_1^{2} - c_{2}\alpha|\Omega|-c_{1}|\Omega| \nonumber\\
&\geq&\int_{\Omega}\left(\frac{3}{2}c_0(u^{n+1})^{2p} + c_{3}\right)dx - c_{3}|\Omega|\nonumber\\
&&+\varepsilon |u^{n+1}|_1^{2} - c_{2}\alpha|\Omega| - c_{1}|\Omega|\nonumber\\
&\geq&\int_{\Omega}F( u^{n+1})dx + \varepsilon |u^{n+1}|_1^{2} - C_{0}(\alpha)\nonumber\\
&\geq& \mathcal E(u^{n+1}) - C_{0}(\alpha),
\end{eqnarray}
where
\begin{eqnarray*}
C_{0}(\alpha )=[c_{2}\alpha+c_{1}+c_{3}]|\Omega|.
\end{eqnarray*}
Using the identity
\begin{eqnarray}
2(a-b,a) = |a|^2-|b|^2 + |a-b|^2,\label{eq3.16}
\end{eqnarray}
and substituting (\ref{eq4.19}) into (\ref{eq4.14}), we obtain
\begin{eqnarray}\label{eq4.21}
\qquad \frac{1}{2}\left(|\overline {u^{n+1}}|_{-1}^{2}-|\overline {u^{n}}|_{-1}^{2}+|\overline {u^{n+1}}-\overline {u^{n}}|_{-1}^{2}\right)+k_n\mathcal E(u^{n+1})\leq C_{0}(\alpha )k_n.
\end{eqnarray}
In view of the relation (\ref{eq2.16}), we have
\begin{eqnarray}\label{eq4.22}
\frac{\varepsilon}{2}|u^{n+1}|_1^{2}\geq \frac{\varepsilon}{2\gamma^{2}_1}|u^{n+1}-m(u^{n+1})|_{-1}^{2}=\frac{\varepsilon}{2\gamma^{2}_1}|\overline {u^{n+1}}|_{-1}^{2}.
\end{eqnarray}
Combining (\ref{eq4.21}), (\ref{eq4.22}) and (\ref{eqa2.6}) yields
\begin{eqnarray}\label{eq4.23}
\frac{1}{2}\left(|\overline {u^{n+1}}|_{-1}^{2}-|\overline {u^{n}}|_{-1}^{2}\right)+\frac{k_n\varepsilon}{2\gamma^{2}_1}|\overline {u^{n+1}}|_{-1}^{2}\leq C_{1}(\alpha)k_n.
\end{eqnarray}
%where
%\begin{eqnarray*}
%C_{1}(\alpha)=C_{0}(\alpha)+c_3|\Omega|.
%\end{eqnarray*}
%It follows from (\ref{eq4.23}) that
%\begin{eqnarray}
%|\overline {u^{n+1}}|_{-1}^{2}&\leq& \frac{1}{1+\frac{\varepsilon k_n}{\gamma_1^{2}}}|\overline {u^{n}}|_{-1}^{2} + \frac{2k_n}{1+\frac{\varepsilon k_n}{\gamma_1^{2}}}C_{1}(\alpha)\nonumber\\
%&\leq&\prod\limits_{i=0}^n \left(1-\frac{\frac{\varepsilon k_i}{\gamma_1^{2}}}{1+\frac{\varepsilon k_i}{\gamma_1^{2}}}\right)|\overline {u_{0}}|_{-1}^{2}
%+ 2C_{1}(\alpha)\sum_{i=0}^{n}\prod\limits_{j=i}^n \left(1-\frac{\frac{\varepsilon k_j}{\gamma_1^{2}}}{1+\frac{\varepsilon k_j}{\gamma_1^{2}}}\right)k_i.
%\end{eqnarray}
Since $k_i\le k$, $i=0,1,\cdots$, we further have
\begin{eqnarray}
|\overline {u^{n+1}}|_{-1}^{2}&\leq&\prod\limits_{i=0}^n \left(1-\frac{\varepsilon k_i}{\gamma_1^2+\varepsilon k}\right)|\overline {u_{0}}|_{-1}^{2}
+ 2C_{1}(\alpha)\sum_{i=0}^{n}\prod\limits_{j=i}^n \left(1-\frac{\varepsilon k_j}{\gamma_1^2+\varepsilon k}\right)k_i\nonumber\\
&\leq&\prod\limits_{i=0}^n \exp\left(-\frac{\varepsilon k_i}{\gamma_1^2+\varepsilon k}\right)|\overline {u_{0}}|_{-1}^{2}
+ 2C_{1}(\alpha)\sum_{i=0}^{n}\prod\limits_{j=i}^n \exp\left(-\frac{\varepsilon k_j}{\gamma_1^2+\varepsilon k}\right)k_i\nonumber\\
&=&\exp\left(-\frac{\varepsilon t^{n+1}}{\gamma_1^2+\varepsilon k}\right)|\overline {u_{0}}|_{-1}^{2}
+ 2C_{1}(\alpha)\sum_{i=0}^{n} \exp\left(-\frac{\varepsilon (t^{n+1}-t^i)}{\gamma_1^2+\varepsilon k}\right)k_i\nonumber\\
&\leq&\exp\left(-\frac{\varepsilon t^{n+1}}{\gamma_1^2+\varepsilon k}\right)|\overline {u_{0}}|_{-1}^{2}
+ 2C_{1}(\alpha)\int^{t^{n+1}}_0 \exp\left(-\frac{\varepsilon (t^{n+1}-s)}{\gamma_1^2+\varepsilon k}\right)ds,~~~~~
\end{eqnarray}
which implies (\ref{eqa4.19}). For any $u_0$ satisfying $|\overline{u_0}|_{-1}\le R$, for any given $\rho>\rho_{0k}$, there exists a positive integer $n_{0k}$ with
\begin{eqnarray}\label{eq4.27}
t^{n_{0k}}\ge \frac{\gamma_1^2+\varepsilon k}{\varepsilon}\left[\ln R^2-\ln(\rho^2-\rho_{0k}^2)\right]
\end{eqnarray}
such that
\begin{eqnarray*}&&\exp\left(-\frac{\varepsilon t^{n+1}}{\gamma_1^2+\varepsilon k}\right)|\overline {u_{0}}|_{-1}^{2}
+ 2C_{1}(\alpha)\frac{\gamma_1^2+\varepsilon k}{\varepsilon} \left[1-\exp\left(-\frac{\varepsilon t^{n+1}}{\gamma_1^2+\varepsilon k}\right)\right]\le \rho^2,\\
&&\quad \forall n\ge n_{0k}(R,\alpha,\rho,k).
\end{eqnarray*}
Hence, we have (\ref{eqa4.17}). Then we complete the proof.
\qquad\end{proof}

We notice that the stability bounds (\ref{eqa4.19}) and (\ref{eqa4.17}) are unconditional estimates, that is, they hold for any time step sequence $\{k_n\}$. The stability bound (\ref{eqa4.17}) leads to an absorbing set in the affine space $\mathcal H_\alpha$ for $S_{k}$, and hence the mapping $S_{k}$ generates a discrete dissipative dynamical system on $\mathcal H_\alpha$, for each time mesh $\mathcal J$. We wish to point out that $\rho_{0k}\to \rho_0$ as $k\to 0$. This implies that the radius of the absorbing set of discretise system with vanished step sizes is the same as that of continuous system and the implicit Euler method can completely preserve the long-term behavior of the underlying system (1.1) in the affine space $\mathcal H_\alpha$ with the norm $|\cdot|_{-1}$.

With Lemma \ref{lem4.2}, we show the following uniform estimate in $H^{-1}$, independent of the initial data and the time step-sizes $k_n$.

\begin{theorem}[Time uniform bound in $H^{-1}$] Assume that (\ref{eqa2.4}), (\ref{eqa2.5}), (\ref{eqa2.6}), and (\ref{eq2.18a}) are fulfilled. Let $u^{n+1}$ be the solution of the numerical scheme (\ref{eq3.4a}) and define \begin{eqnarray*}
\hat\rho_{0}=\sqrt{2C_1(\alpha)}\left(\frac{\gamma_1^2}{\varepsilon}+\frac{8\varepsilon}{c^2}\right)^{\frac{1}{2}}.
\end{eqnarray*} If the time step sequence $\{k_n\}$ with $k=\sup_{n\ge 0}k_n$ satisfies (\ref{eqa4.12}), we have
\begin{eqnarray}\label{eqa4.21}
|\overline {u^{n}}|_{-1}\le \hat E_0:=\max\left\{R,\hat\rho_{0}\right\},\qquad n\ge 0,\end{eqnarray}
and uniform estimate
\begin{eqnarray}\label{eqa4.22}
|\overline {u^{n}}|_{-1}\le {\hat\rho_0}^\prime:=2\hat\rho_0,\qquad n\ge n_0,
\end{eqnarray}
where $n_0=n_0(R,\alpha,\hat\rho_0)$ will be defined in (\ref{eqa4.27}).
\end{theorem}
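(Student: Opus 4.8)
The plan is to combine the unconditional stability estimate (\ref{eqa4.19}) of Lemma \ref{lem4.2} with the step-size restriction (\ref{eqa4.12}), the latter serving precisely to strip every explicit occurrence of $k$ out of the bounds. The first step is to record the consequence of (\ref{eqa4.12}): since $k\le 8\varepsilon/c^2$, the coefficient governing the absorbing radius obeys
\[
\frac{\gamma_1^2+\varepsilon k}{\varepsilon}=\frac{\gamma_1^2}{\varepsilon}+k\le \frac{\gamma_1^2}{\varepsilon}+\frac{8\varepsilon}{c^2},
\]
so that $\rho_{0k}^2=2C_1(\alpha)\frac{\gamma_1^2+\varepsilon k}{\varepsilon}\le 2C_1(\alpha)\bigl(\frac{\gamma_1^2}{\varepsilon}+\frac{8\varepsilon}{c^2}\bigr)=\hat\rho_0^2$, i.e.\ $\rho_{0k}\le\hat\rho_0$ uniformly in $k$. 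This single monotone replacement is what turns the $k$-dependent radius of Lemma \ref{lem4.2} into the $k$-free radius $\hat\rho_0$ of the theorem.

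Next I would read the right-hand side of (\ref{eqa4.19}) as a convex combination. Setting $\lambda:=\varepsilon/(\gamma_1^2+\varepsilon k)$, it equals $e^{-\lambda t^{n+1}}|\overline{u_0}|_{-1}^2+(1-e^{-\lambda t^{n+1}})\rho_{0k}^2$, whose weights are nonnegative and sum to one. Hence
\[
|\overline{u^{n+1}}|_{-1}^2\le\max\{|\overline{u_0}|_{-1}^2,\rho_{0k}^2\}\le\max\{R^2,\hat\rho_0^2\}=\hat E_0^2,
\]
where the last two steps use (\ref{eq2.18a}) and the inequality $\rho_{0k}\le\hat\rho_0$ just established. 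Combined with $|\overline{u^0}|_{-1}=|\overline{u_0}|_{-1}\le R\le\hat E_0$, this is exactly (\ref{eqa4.21}), and no transient overshoot needs to be controlled.

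For the asymptotic bound (\ref{eqa4.22}) I would simply discard the decay in the second term: from (\ref{eqa4.19}) and $\rho_{0k}^2\le\hat\rho_0^2$,
\[
|\overline{u^n}|_{-1}^2\le e^{-\lambda t^n}R^2+\hat\rho_0^2 .
\]
Demanding that the right-hand side lie below $(2\hat\rho_0)^2=4\hat\rho_0^2$ reduces to $e^{-\lambda t^n}R^2\le 3\hat\rho_0^2$. Since $1/\lambda=\frac{\gamma_1^2}{\varepsilon}+k\le\frac{\gamma_1^2}{\varepsilon}+\frac{8\varepsilon}{c^2}$, it suffices to choose $n_0$ as the least integer for which
\begin{equation}
t^{n_0}\ge\left(\frac{\gamma_1^2}{\varepsilon}+\frac{8\varepsilon}{c^2}\right)\bigl[\ln R^2-\ln(3\hat\rho_0^2)\bigr],\label{eqa4.27}
\end{equation}
with the bracket read as $0$ (so $n_0=0$) whenever $R^2\le 3\hat\rho_0^2$; in that degenerate case (\ref{eqa4.21}) already gives $\hat E_0^2\le 3\hat\rho_0^2<4\hat\rho_0^2$. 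For $n\ge n_0$ one has $t^n\ge t^{n_0}$, and (\ref{eqa4.22}) follows.

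I do not expect a substantive obstacle; the argument is essentially monotonicity bookkeeping on top of Lemma \ref{lem4.2}. The only points demanding care are to ensure the threshold (\ref{eqa4.27}) is genuinely independent of $k$ --- which is guaranteed by the replacement $\frac{\gamma_1^2+\varepsilon k}{\varepsilon}\le\frac{\gamma_1^2}{\varepsilon}+\frac{8\varepsilon}{c^2}$ --- and to treat the case $R^2\le 3\hat\rho_0^2$ separately so that the logarithm in (\ref{eqa4.27}) is never applied to a nonpositive argument.
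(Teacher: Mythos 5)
Your proposal is correct and follows essentially the same route as the paper: read the right-hand side of (\ref{eqa4.19}) as a convex combination to get the uniform bound $\max\{R,\hat\rho_0\}$ after replacing $\frac{\gamma_1^2+\varepsilon k}{\varepsilon}$ by $\frac{\gamma_1^2}{\varepsilon}+\frac{8\varepsilon}{c^2}$ via (\ref{eqa4.12}), then choose $n_0$ so that the exponentially decaying term is dominated. The only (harmless) differences are that your threshold uses $\ln(R^2/(3\hat\rho_0^2))$ where the paper uses $\ln(R^2/\hat\rho_0^2)$, and that you explicitly handle the degenerate case of a nonpositive logarithm, which the paper leaves implicit.
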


\begin{proof} Boundedness estimate (\ref{eqa4.21}) is a direct result of (\ref{eqa4.19}). For any $u_0$ satisfying $|\overline{u_0}|_{-1}\le R$, there exists a positive integer $n_{0}$ with
\begin{eqnarray}\label{eqa4.27}
t^{n_{0}}\ge \left(\frac{\gamma_1^2}{\varepsilon}+\frac{8\varepsilon}{c^2}\right)\ln \left(R^2/\hat\rho^2_0\right)
\end{eqnarray}
such that (\ref{eqa4.22}) holds. The proof is complete.
\qquad\end{proof}

From the inequality (\ref{eq4.21}) and uniform estimate (\ref{eqa4.22}), we also obtain the following $H^{-1}$ bound on the difference of the solution at adjacent time steps:
\begin{eqnarray}
\sum\limits_{j=0}^{n-1}|\overline {u^{j+1}}-\overline {u^{j}}|_{-1}^{2}&\le& 2C_1(\alpha)t^{n}+R^2,\\
\sum\limits_{j=n}^{n+m-1}|u^{j+1}-u^{j}|_{-1}^{2}&\le& 2C_1(\alpha)(t^{n+m}-t^n)+\left(\hat\rho_0^\prime\right)^2,\quad n\ge n_0,~~m>0.\label{eq4.36}
\end{eqnarray}
To infer the inequality (\ref{eq4.36}), which will be used in the proof of the uniform estimate for the discrete chemical potential $\omega^{n+1}$, we have employed the relation $|u^{j}-u^{j-1}|_{-1}=|\overline {u^{j}}-\overline {u^{j-1}}|_{-1}$.

\subsection{Time uniform bound in $\mathcal H^1_\alpha$ }
We now establish the uniform a priori estimate for the numerical solution in $\mathcal H^1_\alpha$.

\begin{theorem}[Time uniform bound in $\mathcal H^1_\alpha$]\label{th2} Assume that (\ref{eqa2.4}), (\ref{eqa2.5}), (\ref{eqa2.6}), (\ref{eqa2.7}), and (\ref{eq2.18a}) are fulfilled. Let $u^{n+1}$ be the solution of the numerical scheme (\ref{eq3.4a}). Then for any $r>0$ and any time step sequence $\{k_n\}$ with $k=\sup_{n\ge 0}k_n$ satisfying (\ref{eqa4.12}), there exists a positive integer $n_1=n_1(R,\alpha,\hat\rho_{1},k,r)$ such that
\begin{eqnarray}\label{eq4.35}
|u^{n}|_1 \leq  \hat\rho_{1},\qquad n\ge n_1,
\end{eqnarray}
with
\begin{eqnarray}
\hat\rho_{1}=\left(\frac{2}{\varepsilon} C_{1}(\alpha )+\frac{\left(\hat\rho_{0}^\prime\right)^2}{r\varepsilon }\right)^{\frac{1}{2}}. \nonumber
\end{eqnarray}This leads to an absorbing set in the space $\mathcal H^1_\alpha$ of radius $\hat\rho_1$ for $S_{k}$ and implies the existence of a compact global attractor $\mathcal A_k\subset \mathcal H^1_\alpha$ for the scheme for each time grid $\mathcal J$ with $k\le \frac{8\varepsilon}{c^2}$ and $$\sup_{u\in \mathcal A}|u|_1\le \hat\rho_{1},$$
where $\mathcal A$ is the union of the global attractors for the scheme with different time grids $\mathcal J$.
\end{theorem}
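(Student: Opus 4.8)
The plan is to reproduce, at the discrete level, the continuous-in-time argument behind Proposition 3.2: use a summed version of the energy--dissipation inequality \eqref{eq4.21} to control a \emph{time average} of the energy, upgrade this to a uniform-in-time energy bound via the energy decay property of Proposition \ref{pro4.1}, and finally convert the energy bound into an $H^1$ bound using the coercivity of $\mathcal E$ that follows from assumption \eqref{eqa2.6}. The three ingredients already available -- the uniform $H^{-1}$ estimate \eqref{eqa4.22}, the energy inequality \eqref{eq4.21}, and the monotonicity \eqref{eq4.12} -- are exactly what is needed.

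First I would discard the nonnegative term $|\overline{u^{n+1}}-\overline{u^n}|_{-1}^2$ in \eqref{eq4.21} to obtain the one-step inequality
$$\tfrac12\big(|\overline{u^{n+1}}|_{-1}^2-|\overline{u^n}|_{-1}^2\big)+k_n\mathcal E(u^{n+1})\le C_0(\alpha)k_n,$$
and sum it over a window $n=n_0,\ldots,n_0+N-1$, where $n_0$ is the index from the uniform $H^{-1}$ bound \eqref{eqa4.22}. The first group telescopes, and invoking $|\overline{u^{n_0}}|_{-1}\le\hat\rho_0'$ gives
$$\sum_{n=n_0}^{n_0+N-1}k_n\mathcal E(u^{n+1})\le C_0(\alpha)\,(t^{n_0+N}-t^{n_0})+\tfrac12(\hat\rho_0')^2.$$
Now comes the key step: choose $N$ to be the smallest integer with $t^{n_0+N}-t^{n_0}\ge r$ (possible since $t^n\to\infty$), and apply an averaging argument. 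Since the weights $k_n$ are positive and sum to $t^{n_0+N}-t^{n_0}$, there must be an index $n^\ast\in\{n_0,\ldots,n_0+N-1\}$ at which $\mathcal E(u^{n^\ast+1})$ does not exceed the weighted average, so that
$$\mathcal E(u^{n^\ast+1})\le C_0(\alpha)+\frac{(\hat\rho_0')^2}{2(t^{n_0+N}-t^{n_0})}\le C_0(\alpha)+\frac{(\hat\rho_0')^2}{2r}.$$

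Under the step-size restriction $k\le 8\varepsilon/c^2$, the energy decay inequality \eqref{eq4.12} makes $\mathcal E(u^n)$ nonincreasing, hence $\mathcal E(u^n)\le\mathcal E(u^{n^\ast+1})$ for all $n\ge n^\ast+1$; setting $n_1=n^\ast+1$ (which depends on $R,\alpha,\hat\rho_1,k,r$) yields a bound holding for all $n\ge n_1$. To finish, assumption \eqref{eqa2.6} gives $\int_\Omega F(u)\,dx\ge -c_3|\Omega|$, hence the coercivity $\mathcal E(u)\ge\tfrac{\varepsilon}{2}|u|_1^2-c_3|\Omega|$, so for $n\ge n_1$,
$$\frac{\varepsilon}{2}|u^n|_1^2\le\mathcal E(u^n)+c_3|\Omega|\le C_0(\alpha)+c_3|\Omega|+\frac{(\hat\rho_0')^2}{2r}.$$
Since $C_0(\alpha)+c_3|\Omega|=C_1(\alpha)$ by \eqref{eq2.19}, this is precisely $|u^n|_1^2\le\frac{2}{\varepsilon}C_1(\alpha)+\frac{(\hat\rho_0')^2}{r\varepsilon}=\hat\rho_1^2$. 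The absorbing set and compact global attractor then follow from standard dynamical-systems theory: the $H^1$-ball of radius $\hat\rho_1$ intersected with $\mathcal H_\alpha$ is precompact in $\mathcal H_\alpha$ by the Rellich theorem, and the continuous map $S_k$ then admits a compact global attractor in $\mathcal H^1_\alpha$ for each admissible grid $\mathcal J$.

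I expect the step needing the most care to be the averaging extraction of a single good time level $n^\ast$, together with the bookkeeping that collapses the constants to exactly $\hat\rho_1$: matching $C_0(\alpha)+c_3|\Omega|$ with $C_1(\alpha)$, and arranging the window length to exceed $r$ so that the residual $H^{-1}$ mass $(\hat\rho_0')^2$ contributes precisely the $\frac{(\hat\rho_0')^2}{r\varepsilon}$ term rather than a larger or $r$-dependent quantity.
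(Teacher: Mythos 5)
Your proposal is correct and follows essentially the same route as the paper: sum the one-step inequality \eqref{eq4.21} starting from $n_0$, use the uniform $H^{-1}$ bound \eqref{eqa4.22} to control the telescoped term, invoke the energy decay of Proposition \ref{pro4.1} to convert the time-averaged energy bound into a pointwise one, and then use the coercivity from \eqref{eqa2.6} together with $C_0(\alpha)+c_3|\Omega|=C_1(\alpha)$ to land exactly on $\hat\rho_1$. The only cosmetic difference is that you extract a good index $n^\ast$ by averaging and then propagate forward by monotonicity, whereas the paper uses monotonicity directly to bound $\mathcal E(u^{m+1})$ by the weighted average; the two are logically equivalent here.
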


\begin{proof} Adding up (\ref{eq4.21}) with $n$ from $n_0$ to $m$
\begin{eqnarray}
\frac{1}{2}\left(|\overline {u^{m+1}}|_{-1}^{2}-|\overline {u^{n_0}}|_{-1}^{2}+\sum\limits_{i=n_0}^m|\overline {u^{i+1}}-\overline {u^{i}}|_{-1}^{2}\right)&+&\sum\limits_{i=n_0}^mk_i\mathcal E(u^{i+1})\nonumber\\
&\leq& C_{0}(\alpha )\sum\limits_{i=n_0}^mk_i,
\end{eqnarray}
where $n_0$ has been defined in (\ref{eqa4.27}). The bound in $H^{-1}$ is essentially there already.
This implies that
\begin{eqnarray}
\sum\limits_{i=n_0}^mk_i\mathcal E(u^{i+1})\leq C_{0}(\alpha )\sum\limits_{i=n_0}^mk_i+\frac{\left(\hat\rho_{0}^\prime\right)^2}{2}
\end{eqnarray}
Since by Proposition \ref{pro4.1} $\mathcal E(u^{i+1})$ decays, we have
\begin{eqnarray}\label{eq4.31}
\mathcal E(u^{m+1})\leq C_{0}(\alpha )+\frac{\left(\hat\rho_{0}^\prime\right)^2}{2(t^{m+1}-t^{n_0})}.
\end{eqnarray}
Noticing that the assumption (\ref{eqa2.6}) implies
\begin{eqnarray*}
\int_{\Omega}F( u^{m+1})dx \geq \int_{\Omega}\left[\frac{1}{2}c_0( u^{m+1})^{2p}-c_{3}\right]dx,
\end{eqnarray*}
we conclude from (\ref{eq4.31}) that
\begin{eqnarray*}
\int_{\Omega}\frac{1}{2}c_0( u^{m+1})^{2p}dx+\frac{\varepsilon }{2}| u^{m+1}|_1^2\leq C_{1}(\alpha )+\frac{\left(\hat\rho_{0}^\prime\right)^2}{2(t^{m+1}-t^{n_0})},
\end{eqnarray*}
which implies
\begin{eqnarray}
|u^{m+1}|_1^2\leq \frac{2}{\varepsilon} C_{1}(\alpha )+\frac{\left(\hat\rho_{0}^\prime\right)^2}{\varepsilon(t^{m+1}-t^{n_0})}.
\end{eqnarray}
Then for any $r>0$ (say $r=1$), there exists a positive integer $n_1$ such that $t^{n_1}-t^{n_0}>r$ and hence
when $n\ge n_1$
\begin{eqnarray}
| u^{n}|_1^2\leq \frac{2}{\varepsilon} C_{1}(\alpha )+\frac{\left(\hat\rho_{0}^\prime\right)^2}{r\varepsilon }.
\end{eqnarray}
This completes the proof of Theorem \ref{th2}.
\qquad\end{proof}

Observe that the uniform bound of the numerical solutions obtained in Theorem \ref{th2} is similar to that of the exact solution. We would like to emphasize the point that Theorem \ref{th2} implies that trajectories starting from $B_R(\mathcal H_\alpha)$ (ball centered at the origin with radius $R$ in the affine space $\mathcal H_\alpha$ with the norm $|\cdot|_{-1}$) enter an absorbing ball in $\mathcal H^1_\alpha$ of radius $\hat\rho_1$ with approximately $n_1$ steps.

From (\ref{eq4.31}), we deduce that
\begin{eqnarray}\label{eqaa4.43}
\mathcal E(u^{m+1})\leq C_{0}(\alpha )+\frac{\left(\hat\rho_{0}^\prime\right)^2}{2r}:=C_2(r),\quad \forall r>0,\quad t^{m+1}-t^{n_0}\ge r,
\end{eqnarray}
which will be used in following analysis.

\begin{remark}[Alternative proof of uniform estimate in $\mathcal H^1_\alpha$] Adding up (\ref{eq4.21}) with $n$ from $0$ to $m$
\begin{eqnarray}\label{eq4.40}
&&\frac{1}{2}\left(|\overline {u^{m+1}}|_{-1}^{2}-|\overline {u}_{0}|_{-1}^{2}+\sum\limits_{i=0}^m|\overline {u^{i+1}}-\overline {u^{i}}|_{-1}^{2}\right)+\sum\limits_{i=0}^mk_i\mathcal E(u^{i+1})\nonumber\\
&\leq& C_{0}(\alpha )\sum\limits_{i=0}^mk_i.
\end{eqnarray}
Utilising the decay property of $\mathcal E(u^{i+1})$ and the assumption (\ref{eqa2.6}) yields
\begin{eqnarray}
|u^{m+1}|_1^2 \leq \frac{2}{\varepsilon}C_1(\alpha)+\frac{1}{\varepsilon t^{m+1}}|\overline {u}(0)|_{-1}^{2}.
\end{eqnarray}
Then when $|\overline {u}(0)|_{-1}\le R$, for any given $\rho>\tilde \rho_1=\left(\frac{2}{\varepsilon}C_1(\alpha)\right)^{\frac{1}{2}}$, there exists a positive integer $m$ such that $t^{m+1}>\frac{R}{\varepsilon(\rho^2-\tilde \rho^2_1)}$ and hence $| u^{m+1}|_1\le \rho.$
\end{remark}

\subsection{Time uniform bound in $\mathcal H^2_\alpha$} With the $\mathcal H^1_\alpha$ uniform bound established in the previous subsection, we now prove the $\mathcal H^2_\alpha$ uniform bound of the scheme (\ref{eq3.4a}) and state the following theorem.

\begin{theorem}[Time uniform bound in $\mathcal H^2_\alpha$]\label{th4.6} Assume that $|u_0|_2\le R_2$, and (\ref{eqa2.4}), (\ref{eqa2.5}), (\ref{eqa2.6}), (\ref{eqa2.7}), (\ref{eq2.18a}) are fulfilled. Let $u^{n+1}$ be the solution of the numerical scheme (\ref{eq3.4a}). Then for any time step sequence $\{k_n\}$ with $k=\sup_{n\ge 0}k_n$ satisfying (\ref{eqa4.12}), we have
\begin{eqnarray}\label{eq4.42}
|u^{n+1}|_2^2\le R_2^2+\frac{\gamma^\prime_2 t^{n+1}}{\varepsilon}=:\hat E_2(R_2,t^{n+1}),\qquad \forall n\ge 0,
\end{eqnarray}
where $\gamma^\prime_2$ will be given in (\ref{eq4.53}). Furthermore, for any given $\rho>\hat\rho_{2}$ with
\begin{equation}
\hat\rho_{2}=\left(\frac{\gamma^3_4\varepsilon }{3\sqrt{3}}\hat\rho_{1}^{2}+\frac{\gamma_3}{\varepsilon}\right)^{\frac{1}{2}}\left(\frac{8\varepsilon}{c^2}+\frac{2}{\varepsilon } \right)^{\frac{1}{2}},
\end{equation}
where $\gamma_3$ and $\gamma_4$ will be given in (\ref{eq4.44}) and (\ref{eq4.48}), respectively, there exists a positive integer $n_2=n_2(R_2,\alpha,\hat\rho_{2},k)$ such that
\begin{eqnarray}\label{eqa4.43}
|u^{n}|_2 \leq  \rho,\qquad n\ge n_2.
\end{eqnarray}
\end{theorem}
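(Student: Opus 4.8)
The plan is to obtain the $\mathcal H^2_\alpha$ bound by running the same kind of energy argument used for the $H^{-1}$ and $\mathcal H^1_\alpha$ estimates, but now testing the scheme against $A^2u^{n+1}$ so that the left-hand side reproduces the $\dot H^2$ seminorm $|u^{n+1}|_2^2=\|Au^{n+1}\|^2$. First I would take the inner product of \eqref{eq3.4a} with $A^2u^{n+1}$ and invoke the $A^2$-weighted version of the identity \eqref{eq3.16}, which gives $(\bar\partial_t u^{n+1},A^2u^{n+1})=\frac12\bar\partial_t|u^{n+1}|_2^2+\frac{k_n}2|\bar\partial_t u^{n+1}|_2^2$. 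Since the stiff term contributes the dissipation $\varepsilon|u^{n+1}|_4^2$, moving the nonlinearity to the right yields
\begin{equation*}
\frac12\bar\partial_t|u^{n+1}|_2^2+\frac{k_n}2|\bar\partial_t u^{n+1}|_2^2+\varepsilon|u^{n+1}|_4^2=-\left(Af(u^{n+1}),A^2u^{n+1}\right).
\end{equation*}
Note that the elliptic problem defining $u^{n+1}$ gives enough regularity ($u^{n+1}\in\dot H^4$) to justify this test function.

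The crux of the argument, and the step I expect to be the main obstacle, is to control the right-hand side using only the already-established uniform bound $|u^{n+1}|_1\le\hat\rho_1$ from Theorem \ref{th2} together with the available dissipation $\varepsilon|u^{n+1}|_4^2$. I would estimate $|(Af(u^{n+1}),A^2u^{n+1})|\le|f(u^{n+1})|_2\,|u^{n+1}|_4$ and then expand $|f(u^{n+1})|_2=\|\Delta f(u^{n+1})\|$ by the chain rule as $\|f''(u^{n+1})|\nabla u^{n+1}|^2+f'(u^{n+1})\Delta u^{n+1}\|$. Because $f$ is polynomial and $d\le 3$, with the degree restricted to $p=2$ precisely when $d=3$, Hölder's inequality combined with the Sobolev embeddings $H^1\hookrightarrow L^6$ (and $H^1\hookrightarrow L^q$ for every finite $q$ when $d\le 2$) reduces $|f(u^{n+1})|_2$ to products of powers of $|u^{n+1}|_1$, $|u^{n+1}|_2$ and $|u^{n+1}|_3$. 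The Gagliardo--Nirenberg interpolation $|u^{n+1}|_2\le\gamma_4|u^{n+1}|_1^{2/3}|u^{n+1}|_4^{1/3}$ (and the analogous bound for $|u^{n+1}|_3$), followed by Young's inequality in the form $ab\le\frac13a^3+\frac23b^{3/2}$, then absorbs a factor $\frac\varepsilon2|u^{n+1}|_4^2$ and leaves a constant depending only on $\hat\rho_1$ and $\varepsilon$; this is exactly where the combination $\frac{\gamma^3_4\varepsilon}{3\sqrt3}\hat\rho_1^2+\frac{\gamma_3}{\varepsilon}$ and the interpolation constants $\gamma_3,\gamma_4$ enter. The delicate point is that the power of $|u^{n+1}|_4$ generated by the nonlinearity must not exceed $2$, and I would check that this holds case by case: in $d\le 2$ all powers of $u$ are absorbed into $L^q$ norms bounded by $|u^{n+1}|_1$, leaving a bound linear in $|u^{n+1}|_2$, while in $d=3$ the restriction $p=2$ is what keeps the resulting exponent below $2$.

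Carrying out this absorption produces the differential inequality $\bar\partial_t|u^{n+1}|_2^2+\frac\varepsilon2|u^{n+1}|_4^2\le\frac{\gamma'_2}{\varepsilon}$, where $\gamma'_2$ depends only on $\hat\rho_1$, $\varepsilon$ and $F$. Discarding the nonnegative dissipation and summing from $j=0$ to $n$, using $|u^0|_2\le R_2$ and $\sum_{j}k_j=t^{n+1}$, immediately gives the finite-time growth bound \eqref{eq4.42}.

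For the uniform estimate \eqref{eqa4.43} I would instead retain the dissipation. Summing the inequality over a time window gives a time-averaged bound on $\sum_j k_j|u^{j+1}|_4^2$, and hence, via a Poincaré-type inequality bounding $|u^{j+1}|_2$ by $|u^{j+1}|_4$ on the mean-free range (equivalently, a time-averaged $\mathcal H^2_\alpha$ bound inherited from the summed relation \eqref{eq4.21}), a time-averaged $\mathcal H^2_\alpha$ bound that becomes uniform once $n$ is large, thanks to the $H^{-1}$ absorbing estimate \eqref{eqa4.22} and the energy bound \eqref{eqaa4.43}. Feeding this time-averaged bound together with the differential inequality into the uniform discrete Gronwall lemma then upgrades it to the pointwise uniform bound $|u^n|_2\le\rho$ for all $n\ge n_2$; the admissible step-size range $k\le\frac{8\varepsilon}{c^2}$ supplies the window-length factor $\left(\frac{8\varepsilon}{c^2}+\frac2\varepsilon\right)^{1/2}$ that appears in the absorbing radius $\hat\rho_2$. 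Together these two steps establish \eqref{eq4.42} and \eqref{eqa4.43}.
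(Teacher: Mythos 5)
Your derivation of the a priori growth bound (\ref{eq4.42}) is essentially the paper's: test the scheme with $\Delta^2u^{n+1}$, bound $|f(u^{n+1})|_2^2$ by $\varepsilon^2\|\Delta^2u^{n+1}\|^2$ plus a constant (the paper simply cites the inequality $|f(u^{n+1})|_2^2\le\gamma_2(1+\|\Delta^2u^{n+1}\|^{2\sigma})$, $\sigma<1$, rather than rederiving it by the chain rule and Sobolev embeddings as you propose, but that is a matter of presentation), absorb the dissipation, and sum. That half is fine.

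The uniform bound (\ref{eqa4.43}) is where your plan has a genuine gap. You propose to feed a uniform \emph{time-averaged} $\mathcal H^2_\alpha$ bound into the discrete uniform Gronwall lemma, and you offer two sources for that time average: (a) retaining the dissipation and summing $\bar\partial_t|u^{n+1}|_2^2+\frac{\varepsilon}{2}\|\Delta^2u^{n+1}\|^2\le\gamma_3/\varepsilon$ over a window, then using Poincar\'e to pass from $|u|_4$ to $|u|_2$; and (b) the summed relation (\ref{eq4.21}). Neither works. For (a), summing over a window $[t^n,t^{n+m}]$ gives $\frac{\varepsilon}{2}\sum_jk_j\|\Delta^2u^{j+1}\|^2\le\frac{\gamma_3}{\varepsilon}(t^{n+m+1}-t^n)+|u^n|_2^2$, and the term $|u^n|_2^2$ at the start of the window is exactly the quantity you are trying to bound uniformly; the only available control on it, (\ref{eq4.42}), grows linearly in $t^n$, so the window average is not uniform in $n$ and the argument is circular. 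For (b), summing (\ref{eq4.21}) yields a time-averaged bound on $\mathcal E(u^{i+1})$, i.e.\ on $|u^{i+1}|_1^2$ and $\int_\Omega F(u^{i+1})$ — an $H^1$-level average, not an $H^2$-level one — and the absorbing estimates (\ref{eqa4.22}) and (\ref{eqaa4.43}) likewise live at the $H^{-1}$ and energy levels. The paper sidesteps all of this: it uses the Gagliardo--Nirenberg inequality (\ref{eq4.48}) not inside the nonlinear estimate but to convert the retained dissipation itself into a damping term, $\frac{\varepsilon k_n}{2}\|\Delta^2u^{n+1}\|^2\ge\frac{\varepsilon k_n}{2}|u^{n+1}|_2^2-\frac{\gamma_4^3\varepsilon k_n}{3\sqrt3}|u^{n+1}|_1^2$, which turns (\ref{eq4.41}) into the contraction $(1+\frac{\varepsilon k_n}{2})|u^{n+1}|_2^2\le|u^n|_2^2+\frac{\gamma_4^3\varepsilon k_n}{3\sqrt3}|u^{n+1}|_1^2+\frac{\gamma_3k_n}{\varepsilon}$. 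Iterating this geometric recursion from $n_1$ onward, with the forcing controlled by the uniform $H^1$ bound $\hat\rho_1$, gives exponential decay of the initial contribution and the stated radius $\hat\rho_2$ directly — no uniform Gronwall lemma and no time-averaged $H^2$ bound are needed. (This also explains why the constant $\frac{\gamma_4^3\varepsilon}{3\sqrt3}\hat\rho_1^2$ appears: it comes from the damping conversion, not from absorbing the nonlinearity as you suggest, and the factor $(\frac{8\varepsilon}{c^2}+\frac{2}{\varepsilon})^{1/2}$ is just $(k+\frac{2}{\varepsilon})^{1/2}$ from evaluating the geometric sum.) If you wish to keep a Gronwall-type route, the legitimate source of a uniform time-averaged higher-order bound is the chemical potential: (\ref{eq4.10}) and (\ref{eqaa4.43}) give window bounds on $\sum k_j|\omega^{j+1}|_1^2$, whence on $\sum k_j|u^{j+1}|_3^2$ — but that is the machinery the paper reserves for the $\mathcal H^3_\alpha$ estimate, not for this theorem.
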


\begin{proof}
We multiply (\ref{eq3.4a}) by $\Delta^2 u^{n+1}$, integrate by parts using the Green formula and the boundary conditions. Then we obtain
\begin{eqnarray}
&&\frac{1}{2}\left(|u^{n+1}|_2^2-|u^{n}|_2^2+|u^{n+1}-u^n|_2^2\right)+\varepsilon k_n \|\Delta^2 u^{n+1}\|^2\nonumber\\
&=&k_n\left(\Delta f(u^{n+1}),\Delta^2 u^{n+1}\right)\nonumber\\
&\le&\frac{k_n}{2\varepsilon}|f(u^{n+1})|_2^2+\frac{\varepsilon k_n}{2}\|\Delta^2 u^{n+1}\|^2,
\end{eqnarray}
which implies that
\begin{eqnarray}\label{eq4.43}
|u^{n+1}|_2^2-|u^{n}|_2^2+|u^{n+1}-u^n|_2^2+\varepsilon k_n \|\Delta^2 u^{n+1}\|^2\le\frac{k_n}{\varepsilon}|f(u^{n+1})|_2^2.
\end{eqnarray}
To prove (\ref{eq4.42}) and (\ref{eqa4.43}), we need the following inequality (see, for example, \cite{Temam88,Sell02})
\begin{eqnarray}\label{eqa4.44}
|f(u^{n+1})|_2^2\le \gamma_2\left(1+\|\Delta^2 u^{n+1}\|^{2\sigma}\right),
\end{eqnarray}
where $0\le \sigma<1$, the condition $p=2$ when $d=3$ being precisely necessary to ensure that $\sigma<1$ in this case.

We first show (\ref{eq4.42}), that is, obtain an a priori bound of $|u^{i}|_2$, $i\ge 1$. From (\ref{eqa4.44}), one gets\begin{eqnarray}\label{eq4.53}
| f(u^{n+1})|_2^2\le \varepsilon^2\|\Delta^2 u^{n+1}\|^{2}+\gamma^\prime_2.
\end{eqnarray}Substitute (\ref{eq4.53}) into (\ref{eq4.43}) to obtain
\begin{equation}\label{eq4.54}
|u^{n+1}|_2^2-|u^{n}|_2^2+|u^{n+1}-u^n|_2^2\le \frac{\gamma^\prime_2 k_n}{\varepsilon}.
\end{equation}It follows from (\ref{eq4.54}) that
\begin{eqnarray}\label{eq4.50}
|u^{n+1}|_2^2\le |u^{n}|_2^2+\frac{\gamma^\prime_2 k_n}{\varepsilon}\le|u^{0}|_2^2+\frac{\gamma^\prime_2 t^{n+1}}{\varepsilon}\le\hat E_2(R_2,t^{n+1}),\qquad \forall n\ge 0.
\end{eqnarray}

We now show the uniform bound (\ref{eqa4.43}). Using the Young inequality, from (\ref{eqa4.44}), we have
\begin{eqnarray}\label{eq4.44}
|f(u^{n+1})|_2^2\le \frac{\varepsilon^2}{2}\|\Delta^2 u^{n+1}\|^{2}+\gamma_3.
\end{eqnarray}
Substitute (\ref{eq4.44}) into (\ref{eq4.43}) to obtain
\begin{equation}\label{eq4.41}
|u^{n+1}|_2^2-|u^{n}|_2^2+|u^{n+1}-u^n|_2^2+\frac{\varepsilon k_n}{2} \|\Delta^2 u^{n+1}\|^2\le \frac{\gamma_3 k_n}{\varepsilon}.
\end{equation}
By the Nirenberg-Gagliardo inequality, there exists a positive constant $\gamma_4$ such that
\begin{eqnarray}\label{eq4.48}
|u|_2\le \gamma_4|u|_1^{2/3}\|\Delta^2 u\|^{1/3}.
\end{eqnarray}
Using the Young inequality, we get
\begin{eqnarray*}
|u^{n+1}|_2^2\le \frac{2\gamma^3_4}{3\sqrt{3}}|u^{n+1}|_1^{2}+\|\Delta^2 u^{n+1}\|^{2},\quad \forall n\ge 0.
\end{eqnarray*}
which implies
\begin{eqnarray}
\frac{\varepsilon k_n}{2}\|\Delta^2 u^{n+1}\|^{2}\ge \frac{\varepsilon k_n}{2}|u^{n+1}|_2^2- \frac{\gamma^3_4\varepsilon k_n}{3\sqrt{3}}|u^{n+1}|_2^{2}, \quad \forall n\ge 0.
\end{eqnarray}
Substituting this inequality into (\ref{eq4.41}) yields
\begin{equation}\label{eq4.49}
\left(1+\frac{\varepsilon k_n}{2}\right)|u^{n+1}|_2^2\le |u^{n}|_2^2 + \frac{\gamma^3_4\varepsilon k_n}{3\sqrt{3}}|u^{n+1}|_1^{2}+\frac{\gamma_3 k_n}{\varepsilon}, \quad \forall n\ge 0.
\end{equation}

By induction, noticing the time uniform bound of $|u^{i}|_1$, $i\ge n_1$, we have from (\ref{eq4.49})
\begin{eqnarray}
|u^{n+1}|_2^2&\le& \prod\limits_{i=n_1}^n\frac{2}{2+\varepsilon k_i}|u^{n_1}|_2^2 + \sum\limits_{i=n_1}^n\prod\limits_{j=i}^n \left(\frac{2}{2+\varepsilon k_j}\right)\left(\frac{\gamma^3_4\varepsilon k_i}{3\sqrt{3}}|u^{i+1}|_1^{2}+\frac{\gamma_3 k_i}{\varepsilon}\right)\nonumber\\
&\le&\prod\limits_{i=n_1}^n\exp\left(-\frac{\varepsilon k_i}{2+\varepsilon k}\right)|u^{n_1}|_2^2 \nonumber\\
&&+ \sum\limits_{i=n_1}^n \prod\limits_{j=i}^n\exp\left(-\frac{\varepsilon k_j}{2+\varepsilon k}\right)\left(\frac{\gamma^3_4\varepsilon }{3\sqrt{3}}\hat\rho_{1}^{2}+\frac{\gamma_3}{\varepsilon}\right)k_i\nonumber\\
&\le&\exp\left(-\frac{\varepsilon (t^{n+1}-t^{n_1})}{2+\varepsilon k}\right)|u^{n_1}|_2^2 \nonumber\\
&&+ \left(\frac{\gamma^3_4\varepsilon }{3\sqrt{3}}\hat\rho_{1}^{2}+\frac{\gamma_3}{\varepsilon}\right)\left(k+\frac{2}{\varepsilon }\right)\left[1-\exp\left(-\frac{\varepsilon (t^{n+1}-t^{n_1})}{2+\varepsilon k}\right)\right].
\end{eqnarray}
Let there be given $\rho>\hat \rho_{2}$. Then there exists a positive integer $n_2$ with
\begin{equation}
t^{n_2}\ge t^{n_1}+\frac{(2+\varepsilon k)[\ln \hat E_2(R_2,t_{n_1+1})-\ln(\rho^2-\hat\rho_{2}^2)]}{\varepsilon}
\end{equation}
such that (\ref{eqa4.43}) holds. The proof thus is complete.
\end{proof}

Theorem \ref{th4.6} implies that for any $\epsilon>0$ there exists constant $\hat\rho_2+\epsilon$ independent of the time step-sizes $k_n$ such the discrete semigroup $S_k$ possesses an absorbing set in $\mathcal H^2_\alpha$ with radius $\hat\rho_2+\epsilon$ which attracts all bounded sets in $\mathcal H^2_\alpha$. In particular, this scheme is uniformly dissipative with
\begin{eqnarray}
\sup\limits_{u\in K}|u|_2\le \hat\rho_2+\epsilon,
\end{eqnarray}
where $K$ is the union of the global attractors for the scheme with different time meshes $\mathcal J$.

There are two byproducts of the inequality (\ref{eq4.41}). Namely, we have
\begin{eqnarray}
\frac{\varepsilon}{2}\sum\limits_{i=0}^nk_n\|\Delta^2 u^{i+1}\|^{2}&\leq& R_2^2+\frac{\gamma_3t^{n+1}}{\varepsilon},\quad \forall n\ge 0.\\
\sum\limits_{i=0}^n|u^{i+1}- u^{i}|_2^{2}&\leq& R_2^2+\frac{\gamma_3t^{n+1}}{\varepsilon},\quad \forall n\ge 0.\end{eqnarray}
Since $\|\Delta^2\cdot\|$ is equivalent to $\|\cdot\|_4$, the first inequality is a bound in $L^2(0,T^*;\dot H^4)$ for the scheme for any $T^*>0$. It is easy to see that the second is a bound on the difference of the solution at adjacent time steps.

For the Neumann boundary condition, we have a much simpler proof for the uniform $\mathcal H^2_\alpha$ estimate.

\begin{remark}[Alterative proof of the uniform bound in $\mathcal H^2_\alpha$ for the Neumann boundary condition] In the case of the Neumann boundary condition, there exists a positive constant $\gamma_5$ such that for any $|u_0|_2\le R_2$ (see, Lemma 55.5 in \cite{Sell02})
\begin{eqnarray}
|u^{n+1}|_2^2\le \gamma_5\|\Delta^2 u^{n+1}\|^{2},\qquad n\ge 0.
\end{eqnarray}
Substituting this inequality into (\ref{eq4.41}) yields
\begin{equation}
\left(1+\frac{\varepsilon \gamma_5k_n}{2}\right)|u^{n+1}|_2^2\le |u^{n}|_2^2 +\frac{\gamma_3 k_n}{\varepsilon},
\end{equation}
which implies
\begin{eqnarray}
|u^{n+1}|_2^2\le \frac{2}{2+\varepsilon \gamma_5k_n}|u^{n}|_2^2 +\frac{2\gamma_3 k_n}{\varepsilon(2+\varepsilon \gamma_5k_n)}.
\end{eqnarray}
By induction, we further have
\begin{eqnarray}\label{eq4.64}
|u^{n+1}|_2^2&\le& \prod\limits_{i=0}^n\frac{2}{2+\varepsilon \gamma_5 k_i}| u^{0}|_2^2 + \sum\limits_{i=0}^n\prod\limits_{j=i}^n \left(\frac{2}{2+\varepsilon \gamma_5k_j}\right)\frac{\gamma_3 k_i}{\varepsilon}\nonumber\\
&\le& \prod\limits_{i=0}^n\left(1-\frac{\varepsilon \gamma_5k_i}{2+\varepsilon \gamma_5k}\right)| u^{0}|_2^2 +\sum\limits_{i=0}^n\prod\limits_{j=i}^n \left(1-\frac{\varepsilon \gamma_5k_j}{2+\varepsilon \gamma_5k}\right)\frac{\gamma_3 k_i}{\varepsilon}\nonumber\\
&\le& \exp\left(-\frac{\varepsilon \gamma_5t^{n+1}}{2+\varepsilon \gamma_5k}\right)|u^{0}|_2^2 +\frac{\gamma_3(2+\varepsilon \gamma_5k)}{\varepsilon^2 \gamma_5}.
\end{eqnarray}
Define
\begin{equation}
\tilde\rho_{2k}=\frac{1}{\varepsilon}\left(\frac{\gamma_3(2+\varepsilon \gamma_5k)}{ \gamma_5} \right)^{\frac{1}{2}},
\end{equation}
and let there be given $\rho>\tilde\rho_{2k}$. Then there exists a positive integer $\tilde n_2$ with
\begin{equation*}
t^{\tilde n_2}\ge \frac{(2+\varepsilon \gamma_5k)[\ln R_2-\ln(\rho^2-\tilde\rho_{2k}^2)]}{\varepsilon \gamma_5}
\end{equation*}
such that
\begin{equation}\label{eq4.66}|u^{n}|_2\le \rho,\qquad \forall n\ge \tilde n_2.\end{equation}

We want to emphasize that (\ref{eq4.66}) holds for any time step sequence $\{k_n\}$. This implies that it is an unconditional $\mathcal H^2_\alpha$ estimate. When (\ref{eqa4.12}) is satisfied, we have the following uniform estimate independent of the initial data and the time step-sizes $k_n$: there exists a positive integer $\hat n_2$ such that
\begin{equation}\label{eq4.67}|u^{n}|_2\le \rho,\qquad \forall n\ge \hat n_2,\end{equation}
for any $\rho>\tilde\rho_2$ with $\tilde\rho_{2}=\frac{1}{\varepsilon}\left(\frac{2\gamma_3}{\gamma_5}+\frac{8\gamma_3\varepsilon^2}{ c^2} \right)^{\frac{1}{2}}$.
\end{remark}

\subsection{Time uniform bound for chemical potential in $H^1$}
Our goal now is to obtain uniform bound in $H^1$ for the discrete chemical potential $\omega^n$ of the scheme (\ref{eq3.4a}). This estimate will be used to show the uniform $\mathcal H^3_\alpha$ estimate. For this
purpose, we need the following discrete uniform Gronwall lemma with variable step-sizes, slightly different from Appendix 2 in \cite{Elliott93} and Lemma 4.6 in \cite{Tone12}.

\begin{lemma}[Discrete Uniform Gronwall Lemma]\label{lem2} Let $n_*$, $N\in \mathbb N$, $a_1$, $a_2$, $a_3$, $\hat r >0$, and $\{k_n\}$, $\{\xi^n\}$, $\{\zeta^n\}$, $\{\eta^n\}$ be four sequences of nonnegative real numbers which satisfy
\begin{eqnarray}k\eta^n&\le&\vartheta\in (0,1), \quad n\ge n_*,\qquad k=\sup\limits_{n\ge n_*}k_n,\\
(1-k_n\eta^n)\xi^{n}&\le& \xi^{n-1}+k_n\zeta^n,\quad n\ge n_*,\label{eq4.71a}
\end{eqnarray}
and
\begin{eqnarray}\label{eq4.72a}\sum\limits_{n=n^*}^{n^*+N}k_n\eta^n\le a_1,\quad \sum\limits_{n=n^*}^{n^*+N}k_n\zeta^n\le a_2,\quad \sum\limits_{n=n^*}^{n^*+N}k_n\xi^n\le a_3,
\end{eqnarray}
for all $n^*\ge n_*$, with $\hat r=t^{n_*+N}-t^{n_*}>0$. Then
$$\xi^n\le \left(a_2+\frac{a_3}{\hat r}\right)\exp(c_\vartheta a_1),\qquad \forall n\ge n_*+N,$$
where the constant $c_\vartheta>0$ depends on $\vartheta$.
\end{lemma}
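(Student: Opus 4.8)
The plan is to establish a discrete uniform Gronwall estimate by mimicking the continuous-time proof, where one integrates a differential inequality over a sliding window and controls the solution at the endpoint via the averaged bound on the window. The discrete analogue requires two phases: first, derive a recursive lower bound showing that $\xi^n$ cannot be too small across the whole window (so that the window-average bound $a_3/\hat r$ forces some index to have a controlled value); second, propagate that controlled value forward via the recurrence \eqref{eq4.71a} with the multiplicative factors absorbed into an exponential. First I would rewrite \eqref{eq4.71a} as $\xi^n \le (1-k_n\eta^n)^{-1}(\xi^{n-1}+k_n\zeta^n)$, which is legitimate since $k_n\eta^n \le k\eta^n \le \vartheta < 1$ guarantees the factor $(1-k_n\eta^n)^{-1}$ is finite and positive.

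Next I would iterate this recurrence from some base index $m$ (with $n_* \le m \le n_*+N$) up to an arbitrary $n \ge n_*+N$. Iterating yields
\begin{eqnarray*}
\xi^n \le \left(\prod_{i=m+1}^{n}\frac{1}{1-k_i\eta^i}\right)\xi^{m} + \sum_{i=m+1}^{n}\left(\prod_{j=i}^{n}\frac{1}{1-k_j\eta^j}\right)k_i\zeta^i.
\end{eqnarray*}
The key technical estimate is the elementary bound $(1-x)^{-1} \le \exp(c_\vartheta x)$ valid for $0 \le x \le \vartheta < 1$, where $c_\vartheta = -\ln(1-\vartheta)/\vartheta$; applying it with $x = k_i\eta^i$ converts every product $\prod_{i}(1-k_i\eta^i)^{-1}$ into $\exp\left(c_\vartheta \sum_i k_i\eta^i\right) \le \exp(c_\vartheta a_1)$ by the first hypothesis in \eqref{eq4.72a}. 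This is precisely where the three control quantities enter and where the constant $c_\vartheta$ of the conclusion originates.

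After this substitution the bound reads $\xi^n \le \exp(c_\vartheta a_1)\left(\xi^{m} + \sum_{i=m+1}^{n}k_i\zeta^i\right)$, and by the second hypothesis in \eqref{eq4.72a} the summed term is bounded by $a_2$. It remains to choose the base index $m$ so that $\xi^{m}$ is itself controlled by $a_3/\hat r$. This is the crux and the main obstacle: I would argue by contradiction or, more directly, by the pigeonhole/averaging principle applied to the third hypothesis. Since $\sum_{n=n_*}^{n_*+N}k_n\xi^n \le a_3$ and $\sum_{n=n_*}^{n_*+N}k_n = \hat r$, the weighted average of $\xi^n$ over the window does not exceed $a_3/\hat r$, so there must exist an index $m \in \{n_*,\ldots,n_*+N\}$ with $\xi^{m} \le a_3/\hat r$. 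Picking this $m$ and inserting it above gives exactly
$$
\xi^n \le \left(a_2 + \frac{a_3}{\hat r}\right)\exp(c_\vartheta a_1), \qquad \forall n \ge n_*+N,
$$
as claimed. The delicate point to get right is that the window-average argument supplies only one good base index per window, but because the conclusion is required for all $n \ge n_*+N$ and the hypotheses \eqref{eq4.72a} hold for every starting index $n^* \ge n_*$, the estimate extends uniformly; I would make sure the exponential factor depends only on $\vartheta$ through $a_1$ and not on the particular window, which the uniform-in-$n^*$ form of \eqref{eq4.72a} guarantees.
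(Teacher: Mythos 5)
Your proposal follows essentially the same route as the paper's proof: iterate the recurrence after inverting the factor $(1-k_n\eta^n)$, convert the products into $\exp(c_\vartheta\sum k_i\eta^i)\le\exp(c_\vartheta a_1)$ via $(1-x)^{-1}\le e^{c_\vartheta x}$, and then use the third hypothesis to control the base value. The only methodological difference is at that last step: you invoke the pigeonhole principle to produce a single index $m$ with $\xi^{m}\le a_3/\hat r$, whereas the paper multiplies the intermediate inequality $\xi^{n_{*1}+N}\le(\xi^{n_{*2}-1}+a_2)e^{c_\vartheta a_1}$ by $k_{n_{*2}-1}$ and sums over the window, so that $\sum k_n\xi^n\le a_3$ is used in aggregate rather than pointwise. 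The two devices are equivalent here; the averaging version avoids an existence argument, the pigeonhole version is perhaps more transparent.

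One point needs to be made precise, because as literally written your argument would fail for $n$ far beyond $n_*+N$. You anchor the pigeonhole to the \emph{initial} window $\{n_*,\dots,n_*+N\}$ and then iterate from $m$ in that window up to an \emph{arbitrary} $n\ge n_*+N$. But the hypotheses in (\ref{eq4.72a}) only bound sums over windows of $N+1$ consecutive indices; if $n-m>N$ the quantities $\sum_{i=m+1}^{n}k_i\eta^i$ and $\sum_{i=m+1}^{n}k_i\zeta^i$ span several windows and are not controlled by $a_1$ and $a_2$, so the exponential factor and the $a_2$ term would grow with $n$. The correct arrangement, which is what the paper does by keeping both $n_{*1}$ and $n_{*2}$ inside a single window of length $N$, is to fix the target index $n\ge n_*+N$ first, apply (\ref{eq4.72a}) with $n^*=n-N$, and choose the base index $m$ in $\{n-N,\dots,n\}$; the iteration then never leaves one window. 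Your closing remark about the estimate ``extending uniformly'' gestures at exactly this, so I regard it as a presentational slip rather than a missing idea, but it must be stated explicitly for the proof to be complete.
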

\begin{proof} The proof is similar with that of Appendix 2 in \cite{Elliott93} and Lemma 4.6 in \cite{Tone12}. Let $n_{*1}$ and $n_{*2}$ be such that $n_*\le n_{*1}<n_{*2}\le n_{*1}+N$. We use recursively (\ref{eq4.71a}) to obtain
\begin{eqnarray*}
\xi^{n_{*1}+N}\le\frac{1}{\prod_{n=n_{*2}}^{n_{*1}+N}(1-k_n\eta^n)}\xi^{n_{*2}-1}
+\sum\limits_{n=n_{*2}}^{n_{*1}+N}\frac{k_n}{\prod_{j=n}^{n_{*1}+N}(1-k_j\eta^j)}\zeta^n.
\end{eqnarray*}
Since for every $\vartheta_n\in (0,1)$, there exists a constant $c_{\vartheta_n}$ such that $\frac{1}{1-\vartheta_n}\le e^{c_{\vartheta_n} \vartheta_n}\le e^{c_{\vartheta} \vartheta_n}$ with $c_\vartheta=\max_{n=n_{*2},\cdots,n_{*1}+N}\{c_{\vartheta_n}\}$, from the assumption (\ref{eq4.72a}), we obtain
\begin{eqnarray*}
\xi^{n_{*1}+N}\le\left(\xi^{n_{*2}-1}+a_2\right)e^{c_\vartheta a_1}.
\end{eqnarray*}
Multiplying this inequality by $k_{n_{*2}-1}$, summing $n_{*2}$ from $n_{*1}+1$ to $n_{*1}+N$ and using the third inequality in (\ref{eq4.72a}), we get the desired result.
\end{proof}

The following lemma will prove useful.
\begin{lemma}Assume that (\ref{eqa2.7}) holds. Let $\omega^{n+1}$ be the discrete chemical potential obtained by the numerical scheme (\ref{eq3.4})-(\ref{eq3.5}). Then
\begin{eqnarray}\label{eq4.71}
\bar\partial_t|\omega^{n+1}|_1^2+k_n|\bar\partial_t \omega^{n+1}|_1^2\le \frac{c^2}{2\varepsilon}|\omega^{n+1}|_1^2,\quad n\ge0.
\end{eqnarray}
\end{lemma}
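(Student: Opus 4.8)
The plan is to differentiate the scheme at the level of the chemical potential and exploit the Lipschitz-type bound $(\ref{eqa2.7})$ on $F''$. First I would take the discrete time-difference $\bar\partial_t$ of the defining relation $(\ref{eq3.5})$, namely $w^{n+1}=\varepsilon Au^{n+1}+f(u^{n+1})$, which upon differencing gives
\begin{eqnarray}\label{eqplan1}
\bar\partial_t\omega^{n+1}=\varepsilon A\,\bar\partial_t u^{n+1}+\frac{f(u^{n+1})-f(u^{n})}{k_n},\qquad n\ge 1.
\end{eqnarray}
I then want to produce a relation in the $|\cdot|_1$ norm, so the natural move is to apply $A^{1/2}$ (equivalently, take the inner product of $A\,\bar\partial_t\omega^{n+1}$ with $\bar\partial_t\omega^{n+1}$), generating the quantities $|\bar\partial_t\omega^{n+1}|_1^2$ and coupling them to $|\bar\partial_t u^{n+1}|_1^2$ and to the increment of $f$.

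Second, I would combine $(\ref{eqplan1})$ with the differentiated evolution equation itself. Taking $\bar\partial_t$ of the scheme $(\ref{eq3.4})$ gives $\bar\partial_t(\bar\partial_t u^{n+1})+A\,\bar\partial_t\omega^{n+1}=0$; testing this against $\bar\partial_t\omega^{n+1}$ in the appropriate inner product (here the $\dot H^1$ pairing is the right one to generate $|\omega^{n+1}|_1^2$) and using the self-adjointness and positivity of $A$ and $G$ should produce the telescoping quantity $\bar\partial_t|\omega^{n+1}|_1^2$ together with the nonnegative "defect" term $k_n|\bar\partial_t\omega^{n+1}|_1^2$ via the identity $(\ref{eq3.16})$. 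The term coming from the increment of $f$ is then estimated by the convexity-type bound: since $F''\ge -c$, one has the pointwise control $|f(u^{n+1})-f(u^{n})|\le (\text{derivative bound})$, which after Cauchy–Schwarz and Young's inequality feeds into the factor $\frac{c^2}{2\varepsilon}|\omega^{n+1}|_1^2$ on the right-hand side, the $\varepsilon$ appearing because it must absorb the gradient term $\varepsilon A$ sitting in $(\ref{eqplan1})$.

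The main obstacle, I expect, is controlling the nonlinear increment $f(u^{n+1})-f(u^{n})$ in the $H^1$ (gradient) norm rather than merely in $L^2$, since differentiating $f(u)$ in space brings in $f'(u)\nabla u$ and the increment brings in spatial derivatives of both $u^{n+1}$ and $u^{n}$. The key is that $(\ref{eqa2.7})$, $F''=f'\ge -c$, is exactly the one-sided bound needed to write $(f(u^{n+1})-f(u^{n}),\,\text{test function})$ in a form where the "bad" part is bounded below by $-c\,\|\bar\partial_t u^{n+1}\|\cdot(\dots)$ and then absorbed; the positive part of $f'$ contributes with a favorable sign. I would need to be careful that the monotone part of $f$ is dissipated into the left-hand side and only the $-c$ shift survives, producing precisely the constant $\frac{c^2}{2\varepsilon}$ after the Young splitting against the $\varepsilon|\bar\partial_t u^{n+1}|_1^2$ term. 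The bound $(\ref{eq2.16})$ relating $|\cdot|_{-1}$ and $|\cdot|_1$, together with the self-adjointness of $G$, will be used to convert between the norm in which $\bar\partial_t u^{n+1}$ naturally lives and the norm in which $\omega^{n+1}=-G\bar\partial_t u^{n+1}$ is measured.
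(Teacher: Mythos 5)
Your proposal assembles the right ingredients (differencing the constitutive relation (\ref{eq3.5}), the one-sided bound $f'\ge -c$ from (\ref{eqa2.7}), Young's inequality against $\varepsilon|\bar\partial_t u^{n+1}|_1^2$, and the identification $\omega^{n+1}=-G\bar\partial_t u^{n+1}$), but the structural step that is supposed to generate the left-hand side of (\ref{eq4.71}) does not work as described. You propose to apply $\bar\partial_t$ to (\ref{eq3.4}) and test $\bar\partial_t(\bar\partial_t u^{n+1})+A\bar\partial_t\omega^{n+1}=0$ against $\bar\partial_t\omega^{n+1}$. That pairing produces $|\bar\partial_t\omega^{n+1}|_1^2$ with no factor $k_n$ together with a second time difference of $u$, and neither telescopes into $\bar\partial_t|\omega^{n+1}|_1^2$ (to telescope you need the \emph{undifferenced} $\omega^{n+1}$ paired with $\bar\partial_t\omega^{n+1}$); moreover the second difference is not available at $n=0$, while the lemma is claimed for all $n\ge 0$. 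The paper instead takes the inner product of the undifferenced equation (\ref{eq3.4}) with $\bar\partial_t\omega^{n+1}$: the term $(A\omega^{n+1},\bar\partial_t\omega^{n+1})$ yields exactly $\tfrac12\bar\partial_t|\omega^{n+1}|_1^2+\tfrac{k_n}{2}|\bar\partial_t\omega^{n+1}|_1^2$ via (\ref{eq3.16}), and the remaining term $(\bar\partial_t u^{n+1},\bar\partial_t\omega^{n+1})$ is expanded with your identity $\bar\partial_t\omega^{n+1}=\varepsilon A\bar\partial_t u^{n+1}+\bar\partial_t f(u^{n+1})$, giving $\varepsilon|\bar\partial_t u^{n+1}|_1^2+(\bar\partial_t u^{n+1},\bar\partial_t f(u^{n+1}))\ge\varepsilon|\bar\partial_t u^{n+1}|_1^2-c\|\bar\partial_t u^{n+1}\|^2$. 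In this pairing the nonlinear increment only ever appears in $L^2$ against $\bar\partial_t u^{n+1}$, so the $H^1$ control of $f(u^{n+1})-f(u^{n})$ you were worried about is never needed; that worry is a symptom of choosing the wrong pairing.

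The second gap is the final conversion to $\frac{c^2}{2\varepsilon}|\omega^{n+1}|_1^2$. Invoking (\ref{eq2.16}) would bring in the Poincar\'e-type constant $\gamma_1$ and cannot yield the stated clean constant. What is actually used is the interpolation $c\|v\|^2\le c|v|_{-1}|v|_1\le \frac{c^2}{4\varepsilon}|v|_{-1}^2+\varepsilon|v|_1^2$ applied to $v=\bar\partial_t u^{n+1}$, with the $\varepsilon|v|_1^2$ piece absorbed by the good term $\varepsilon|\bar\partial_t u^{n+1}|_1^2$ already on the left, followed by the exact identity $|\bar\partial_t u^{n+1}|_{-1}=|\omega^{n+1}|_1$ read off directly from (\ref{eq3.4}). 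Without these two steps the inequality with its specific constant $\frac{c^2}{2\varepsilon}$ is not reached.
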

\begin{proof}We take the inner product of (\ref{eq3.4}) with $\bar\partial_t \omega^{n+1}$ and obtain
\begin{eqnarray}\label{eq4.72}
(\bar\partial_t u^{n+1},\bar\partial_t \omega^{n+1})+\frac{1}{2}\left[\bar\partial_t|\omega^{n+1}|_1^2+k_n|\bar\partial_t \omega^{n+1}|_1^2\right]=0.
\end{eqnarray}
We note that by (\ref{eqa2.7}) and the mean value theorem we have, for all $v_1,v_2\in \mathbb R$,
\begin{eqnarray}
(f(v_1)-f(v_2))(v_1-v_2)=f^\prime(\xi_{v_1,v_2})(v_1-v_2)^2\ge -c(v_1-v_2)^2, \quad \xi_{v_1,v_2}\in \mathbb R.~~~
\end{eqnarray}
It follows that
\begin{eqnarray}\label{eq4.74}
(\bar\partial_t u^{n+1},\bar\partial_t \omega^{n+1})&=&\varepsilon|\bar\partial_t u^{n+1}|_1^2+(\bar\partial_t u^{n+1},\bar\partial_t f(u^{n+1}))\nonumber\\
&\ge&\varepsilon|\bar\partial_t u^{n+1}|_1^2-c\|\bar\partial_t u^{n+1}\|^2.
\end{eqnarray}
Substituting (\ref{eq4.74}) into (\ref{eq4.72}) yields
\begin{eqnarray}\label{eq4.75}
\frac{1}{2}\left[\bar\partial_t|\omega^{n+1}|_1^2+k_n|\bar\partial_t \omega^{n+1}|_1^2\right]+\varepsilon|\bar\partial_t u^{n+1}|_1^2&\le& c\|\bar\partial_t u^{n+1}\|^2\nonumber\\
&\le&\frac{c^2}{4\varepsilon}|\bar\partial_t u^{n+1}|^2_{-1}+\varepsilon\|\bar\partial_t u^{n+1}\|_1^2.~~~
\end{eqnarray}
From (\ref{eq3.4}) we also have
$$|\bar\partial_t u^{n+1}|_{-1}=|\omega^{n+1}|_1.$$
Then (\ref{eq4.71}) follows from (\ref{eq4.75}).
\end{proof}

\begin{theorem}[Time uniform bound of chemical potential in $H^1$]\label{th5} Assume that (\ref{eqa2.4}), (\ref{eqa2.5}), (\ref{eqa2.6}), (\ref{eqa2.7}), and (\ref{eq2.18a}) are fulfilled. Let $\omega^{n+1}$ be the discrete chemical potential obtained by the numerical scheme (\ref{eq3.4})-(\ref{eq3.5}). Then for any time step sequence $\{k_n\}$ with $k=\sup_{n\ge 0}k_n$ satisfying
\begin{eqnarray}\label{eq4.78}
k<\frac{2\varepsilon}{c^2},
\end{eqnarray}
there exists a positive integer $n_\omega$ with $r_1=t^{n_\omega}-t^{n_0+N_1}>0$ and $t^{n_0+N_1}-t^{n_0}\ge r$ such that
\begin{eqnarray}\label{eqa4.75}
|\omega^{n}|_1 \leq  \rho_\omega,\qquad n\ge n_\omega,
\end{eqnarray}
where
\begin{eqnarray}
\rho_\omega^2:=\left(\frac{c^2}{4\varepsilon}C_1(\alpha)+\frac{c^2\left(\hat\rho_0^\prime\right)^2}{8\varepsilon r_1}+\frac{C_2(r)}{r_1}\right)\exp\left(\frac{2c^2r_1}{\varepsilon}\right).
\end{eqnarray}
\end{theorem}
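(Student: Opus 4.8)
The plan is to obtain (\ref{eqa4.75}) as a direct consequence of the discrete uniform Gronwall lemma (Lemma \ref{lem2}) applied to the sequence $\xi^n=|\omega^n|_1^2$. First I would discard the nonnegative term $k_n|\bar\partial_t\omega^{n+1}|_1^2$ in (\ref{eq4.71}); the surviving inequality $\bar\partial_t|\omega^{n+1}|_1^2\le\frac{c^2}{2\varepsilon}|\omega^{n+1}|_1^2$ rearranges into $\bigl(1-k_n\tfrac{c^2}{2\varepsilon}\bigr)|\omega^{n+1}|_1^2\le|\omega^n|_1^2$, which is precisely the recursive hypothesis (\ref{eq4.71a}) with $\eta^n\equiv\frac{c^2}{2\varepsilon}$ (and a vanishing one-step source). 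The step restriction (\ref{eq4.78}), $k<2\varepsilon/c^2$, is imposed exactly so that $k\eta^n=\frac{c^2k}{2\varepsilon}=:\vartheta\in(0,1)$, meeting the admissibility condition of Lemma \ref{lem2}. I would then fix the Gronwall window by choosing $N_1$ with $t^{n_0+N_1}-t^{n_0}\ge r$, so that its left endpoint $n_*=n_0+N_1$ already lies in the regime where the uniform energy bound (\ref{eqaa4.43}), $\mathcal E\le C_2(r)$, and the uniform $H^{-1}$ bound (\ref{eqa4.22}) hold independently of $u_0$; setting $r_1=t^{n_\omega}-t^{n_*}>0$ gives $\hat r=r_1$. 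Because $\eta^n$ is constant, $a_1=\sum k_n\eta^n$ is of order $\frac{c^2r_1}{2\varepsilon}$, which is what produces the factor $\exp\!\bigl(\frac{2c^2r_1}{\varepsilon}\bigr)$ once the lemma's constant $c_\vartheta$ is absorbed.

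The heart of the proof, and the step I expect to be the main obstacle, is the uniform control of the time-accumulated quantities that feed $a_2$ and $a_3$ in Lemma \ref{lem2}, above all the sum $\sum_n k_n|\omega^{n+1}|_1^2$. The bridge is the identity $|\omega^{n+1}|_1=|\bar\partial_t u^{n+1}|_{-1}=\|G^{1/2}\bar\partial_t u^{n+1}\|$, read off from (\ref{eq3.4}), which recasts the $H^1$-seminorm of the chemical potential as the discrete energy-dissipation functional appearing in Proposition \ref{pro4.1}. Summing the energy identity behind that proposition over $[n_*,n_\omega]$ and telescoping $\sum k_n\bar\partial_t\mathcal E$ controls $\sum k_n|\omega^{n+1}|_1^2$ by $\mathcal E(u^{n_*})-\mathcal E(u^{n_\omega})$, hence from above by $C_2(r)$ and from below by the coercivity bound $\mathcal E\ge-c_3|\Omega|$ furnished by (\ref{eqa2.6}); the complementary $H^{-1}$ balance (\ref{eq4.21}), summed over the same window, is what supplies the $C_1(\alpha)$ and $(\hat\rho_0^\prime)^2$ contributions. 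The two delicate points are (i) keeping every constant independent of $u_0$ and of the individual $k_n$—which forces the window to start beyond the absorbing index $n_0$ and forces systematic use of $k_n\le k$ in all the products and geometric sums—and (ii) correctly matching the telescoped boundary terms so that, after division by $\hat r=r_1$, the accumulated bounds assemble into the displayed combination $\frac{c^2}{4\varepsilon}C_1(\alpha)+\frac{c^2(\hat\rho_0^\prime)^2}{8\varepsilon r_1}+\frac{C_2(r)}{r_1}$.

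Finally, with $a_1$, $a_2$ and $a_3$ in hand I would invoke Lemma \ref{lem2} on the window $[n_*,n_\omega]$ to conclude $|\omega^n|_1^2\le\bigl(a_2+a_3/r_1\bigr)\exp(c_\vartheta a_1)=\rho_\omega^2$ for all $n\ge n_\omega=n_*+N$, which is exactly (\ref{eqa4.75}). Recording the explicit lower bound on $t^{n_\omega}$ that guarantees $r_1>0$ closes the argument, in the same spirit as the earlier uniform estimates of this section. The principal difficulty throughout is the first delicate point above: converting the dissipation and $H^{-1}$ energy balances—which a priori involve $\mathcal E(u_0)$ and the initial $H^{-1}$ norm—into bounds that are genuinely uniform in the initial data, by exploiting that the window has been placed inside the absorbing sets constructed in the previous subsections.
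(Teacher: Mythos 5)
Your proposal is correct and follows essentially the same route as the paper: apply the discrete uniform Gronwall lemma to $\xi^n=|\omega^n|_1^2$ via the recursion (\ref{eq4.71}) with $\eta^n\equiv c^2/(2\varepsilon)$ and $\zeta^n\equiv0$, and control $a_3=\sum k_n|\omega^{n+1}|_1^2$ on a window beyond the absorbing time by combining the energy balance (testing with $\omega^{n+1}$, equivalently with $G\bar\partial_t u^{n+1}$), the bound $\mathcal E\le C_2(r)$ from (\ref{eqaa4.43}), and the summed $H^{-1}$ increment bound (\ref{eq4.36}), which is exactly how the paper arrives at (\ref{eq4.85}) before invoking Lemma \ref{lem2}. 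Your packaging of the dissipation sum through the identity $|\omega^{n+1}|_1=\|G^{1/2}\bar\partial_t u^{n+1}\|$ and Proposition \ref{pro4.1} is only a cosmetic rephrasing of the paper's computation, and your explicit use of the lower bound $\mathcal E\ge -c_3|\Omega|$ is, if anything, slightly more careful than the paper's treatment of the terminal energy term.
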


\begin{proof} We take the inner product of (\ref{eq3.4}) with $\omega^{n+1}$ and obtain
\begin{eqnarray}
(\bar\partial_tu^{n+1},\omega^{n+1})+|\omega^{n+1}|_1^2=0,
\end{eqnarray}
which implies that
\begin{eqnarray}
\frac{\varepsilon}{2k_n}\left[|u^{n+1}|_1^2-|u^{n}|_1^2+|u^{n+1}-u^n|_1^2\right]+(\bar\partial_tu^{n+1},f(u^{n+1}))+|\omega^{n+1}|_1^2=0.
\end{eqnarray}
Using (\ref{eqa4.15}), we further have
\begin{eqnarray}
\frac{\varepsilon k_n}{2}|\bar\partial_tu^{n+1}|_1^2+\bar\partial_t\mathcal E(u^{n+1})+|\omega^{n+1}|_1^2\le \frac{ck_n}{2}\|\bar\partial_t u^{n+1}\|^2,
\end{eqnarray}
and therefore
\begin{eqnarray}\label{eq4.83}
\bar\partial_t\mathcal E(u^{n+1})+|\omega^{n+1}|_1^2\le \frac{c^2k_n}{8\varepsilon}|\bar\partial_t u^{n+1}|_{-1}^2.
\end{eqnarray}
Summing (\ref{eq4.83}) from $n=n_0+N_1$ to $n_0+N_1+N_2$ with $N_1,N_2\in \mathbb N$ and $t^{n_0+N_1}-t^{n_0}\ge r$, we obtain
\begin{eqnarray}
&&\mathcal E(u^{n_0+N_1+N_2+1})-\mathcal E(u^{n_0+N_1})+\sum\limits_{i=n_0+N_1}^{n_0+N_1+N_2}k_{i}|\omega^{i+1}|_1^2\nonumber\\
&\le&
\frac{c^2}{8\varepsilon}\sum\limits_{i=n_0+N_1}^{n_0+N_1+N_2}|u^{i+1}-u^i|_{-1}^2.
\end{eqnarray}
Using (\ref{eq4.36}) and (\ref{eqaa4.43}), we get
\begin{eqnarray}\label{eq4.85}
\sum\limits_{i=n_0+N_1}^{n_0+N_1+N_2}k_{i}|\omega^{i+1}|_1^2\le
\frac{c^2}{4\varepsilon}C_1(\alpha)(t^{n_0+N_1+N_2+1}-t^{n_0+N_1})+\frac{c^2}{8\varepsilon}\left(\hat\rho_0^\prime\right)^2+C_2(r),~~~
\end{eqnarray}
where $n_0$ has been defined in (\ref{eqa4.27}).

When $r_1=t^{n_0+N_1+N_2+1}-t^{n_0+N_1}>0$, an application of Lemma \ref{lem2} to (\ref{eq4.71}), in view of (\ref{eq4.85}), leads to (\ref{eqa4.75}). This completes the proof.
\end{proof}

\subsection{Time uniform bound in $\mathcal H^3_\alpha$}

We are now ready to give the uniform bound in $\mathcal H^3_\alpha$.

\begin{theorem}[Time uniform bound in $\mathcal H^3_\alpha$]\label{th5a} Assume that $|u_0|_2\le R_2$, and (\ref{eqa2.4}), (\ref{eqa2.5}), (\ref{eqa2.6}), (\ref{eqa2.7}), (\ref{eq2.18a}) are fulfilled. Let $u^{n+1}$ be the solution of the numerical scheme (\ref{eq3.4a}). Then for any time step sequence $\{k_n\}$ with $k=\sup_{n\ge 0}k_n$ satisfying (\ref{eq4.78}), we have the uniform estimates
\begin{eqnarray}\label{eqa4.83}
|u^{n}|_3 \leq  \hat\rho_3,\qquad n\ge n_3:=\max\{n_\omega,n_1\},
\end{eqnarray}
with $\hat\rho_3=\gamma_6\rho_\omega+\gamma_6\gamma_7\hat\rho_1$, where $\gamma_6$ and $\gamma_7$ will be defined in (\ref{eq4.84}) and (\ref{eq4.86}), respectively.
\end{theorem}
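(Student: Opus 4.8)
The plan is to read the defining relation (\ref{eq3.5}) for the discrete chemical potential, $\omega^{n+1}=\varepsilon A u^{n+1}+f(u^{n+1})$, as an elliptic equation for $u^{n+1}$ and to convert the already-established $H^1$ control of $\omega^{n+1}$ into $H^3$ control of $u^{n+1}$. Rearranging gives $\varepsilon A u^{n+1}=\omega^{n+1}-f(u^{n+1})$; applying $A^{1/2}$ and recalling $|v|_3=\|A^{3/2}v\|$ with $A^{3/2}=A^{1/2}A$ yields the purely algebraic identity $\varepsilon A^{3/2}u^{n+1}=A^{1/2}\omega^{n+1}-A^{1/2}f(u^{n+1})$, whence
\begin{equation*}
\varepsilon|u^{n+1}|_3\le |\omega^{n+1}|_1+|f(u^{n+1})|_1 .
\end{equation*}
The two inputs for the first term are at hand: $|\omega^{n+1}|_1\le\rho_\omega$ for $n\ge n_\omega$ by Theorem~\ref{th5}, and $|u^{n+1}|_1\le\hat\rho_1$ for $n\ge n_1$ by Theorem~\ref{th2}. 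The genuinely hard analytic work — the uniform $H^1$ bound on the chemical potential via the discrete uniform Gronwall Lemma~\ref{lem2} — has thus already been front-loaded, and the remaining task is only to estimate $|f(u^{n+1})|_1$.

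Next I would bound $|f(u^{n+1})|_1$. Since $A^{1/2}$ annihilates constants and $\|A^{1/2}v\|=\|\nabla v\|$ under either boundary condition, $|f(u^{n+1})|_1=\|\nabla f(u^{n+1})\|=\|f'(u^{n+1})\nabla u^{n+1}\|\le\|f'(u^{n+1})\|_{L^\infty}\,|u^{n+1}|_1$. The key ingredient, and the main obstacle for \emph{this} theorem, is a uniform-in-$n$ $L^\infty$ bound on $u^{n+1}$, which is what tames the polynomially growing factor $f'(u^{n+1})$. I obtain it from the uniform $\mathcal H^2_\alpha$ estimate of Theorem~\ref{th4.6} together with the Sobolev embedding $H^2\hookrightarrow L^\infty$, valid because $d\le 3$: for $n$ past the entry time into the $\mathcal H^2_\alpha$ absorbing set one has $\|u^{n+1}\|_{L^\infty}\le C\hat\rho_2$, so that $\|f'(u^{n+1})\|_{L^\infty}\le\sup_{|v|\le C\hat\rho_2}|f'(v)|=:\gamma_7$, a constant independent of the data and of the step-sizes. (One tacitly requires $n$ large enough that this $H^2$ bound also holds; in the present ordering $n_\omega\ge n_2$, so $n_3=\max\{n_\omega,n_1\}$ already guarantees uniform $L^\infty$ control.)

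Finally, combining the estimates for all $n\ge n_3=\max\{n_\omega,n_1\}$ gives $\varepsilon|u^{n+1}|_3\le\rho_\omega+\gamma_7\hat\rho_1$. Dividing by $\varepsilon$ and absorbing the factor $1/\varepsilon$ (and, if the full Sobolev norm $\|\cdot\|_3$ is used in place of the seminorm $|\cdot|_3=\|A^{3/2}\cdot\|$, the corresponding equivalence constant) into a single constant $\gamma_6$, one arrives at $|u^{n+1}|_3\le\gamma_6\rho_\omega+\gamma_6\gamma_7\hat\rho_1=\hat\rho_3$, which is (\ref{eqa4.83}). I expect no further difficulty beyond the $L^\infty$ step: once the uniform chemical-potential bound of Theorem~\ref{th5} and the uniform bound on $\|f'(u^{n+1})\|_{L^\infty}$ are secured, the passage to $\mathcal H^3_\alpha$ is elliptic and algebraic, and the step-size restriction (\ref{eq4.78}) enters only through Theorem~\ref{th5}.
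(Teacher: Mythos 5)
Your argument is essentially the paper's own: both read $\varepsilon A u^{n}=\omega^{n}-f(u^{n})$ as an elliptic identity, bound $|u^n|_3$ by $\gamma_6\bigl(|\omega^n|_1+|f(u^n)|_1\bigr)$, and conclude from the uniform bounds $|\omega^n|_1\le\rho_\omega$ (Theorem~\ref{th5}) and $|u^n|_1\le\hat\rho_1$ (Theorem~\ref{th2}). The only divergence is that the paper simply cites \cite{Li98} for the inequality $|f(u)|_1\le\gamma_7|u|_1$ whereas you derive it via the chain rule and the $L^\infty$ control supplied by the uniform $\mathcal H^2_\alpha$ estimate; this is the same step made explicit, though your parenthetical claim that $n_\omega\ge n_2$ is nowhere established (the harmless fix is to take $n_3=\max\{n_\omega,n_1,n_2\}$, a dependence the paper itself leaves tacit).
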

\begin{proof} Since, with constant $\gamma_6>0$,
\begin{eqnarray}\label{eq4.84}
|u^n|_3\le \gamma_6|\varepsilon \Delta u^n|_1,
\end{eqnarray}
we have
\begin{eqnarray}
|u^n|_3\le \gamma_6|\varepsilon \Delta u^n|_1\le \gamma_6|\omega^n|_1+\gamma_6|f(u^n)|_1.
\end{eqnarray}
In view of (see \cite{Li98})
\begin{eqnarray}\label{eq4.86}|f(u)|_1\le \gamma_7|u|_1,
\end{eqnarray} combining (\ref{eq4.35}) and (\ref{eqa4.75}) leads to (\ref{eqa4.83}).
\end{proof}

From uniform estimate (\ref{eqa4.83}), we know that the discrete semigroup $S_k$ possesses an absorbing set in $\mathcal H^3_\alpha$ with radius $\hat\rho_3$ which attracts all bounded sets in $\mathcal H^2_\alpha$.

\section{Numerical results} Using several numerical examples, we now illustrate the long time behaviour of the numerical solution. To do this, we first need to consider the space discretization of Cahn-Hilliard equation. Here we consider the Fourier pseudospectral approximation.
\subsection{Fourier spectral collocation methods}  For simplicity of presentation we consider a $2$-D domain $\Omega=(0,L_x)\times (0,L_y)$ and assume that $L_x=L_y=2\pi$ and $L_x=M_x\cdot h_x$, $L_y=M_y\cdot h_y$ for some mesh sizes $h_x=h_y=h>0$ and some positive integers $M_x=M_y=2M+1$. All variables are evaluated at the regular numerical grid $(x_i,y_j)$ with $x_i=ih,~y_j=jh$, $0\le i,j\le 2M+1$.

For a periodic function $g$ over the given $2$-D numerical grid, assume its discrete Fourier expansion is given by
\begin{eqnarray}
g_{i,j}=\sum\limits_{l,m=-M}^M\hat g^c_{l,m}e^{ i(lx_i+my_j)},
\end{eqnarray}
in which the collocation coefficients are given by the following backward transformation formula:
\begin{eqnarray}
\hat g_{l,m}=\frac{1}{(2M+1)^2}\sum\limits_{i,j=0}^{2M}f_{i,j}e^{- i(lx_i+my_j)}.
\end{eqnarray}
In the following numerical experiments, we need to calculate the discrete gradient, Laplacian, divergence, and $\nabla\Delta$ which become
\begin{eqnarray}
\nabla_Mg&=&\begin{pmatrix}
\mathcal D_{M_x}g\\
\mathcal D_{M_y}g
\end{pmatrix},\qquad \Delta_Mg=\nabla_M\cdot\nabla_Mg=\left(\mathcal D^2_{M_x}+\mathcal D^2_{M_y}\right)g,\\
\nabla_M\cdot\begin{pmatrix}g_1\\
g_2\end{pmatrix}&=&\mathcal D_{M_x}g_1+\mathcal D_{M_y}g_2,\qquad \nabla_M\Delta_Mg=\begin{pmatrix}\left(\mathcal D^3_{M_x}+\mathcal D_{M_x}\mathcal D^2_{M_y}\right)g\\
\left(\mathcal D_{M_y}\mathcal D^2_{M_x}+\mathcal D^3_{M_y}\right)g\end{pmatrix},
\end{eqnarray}
at the pointwise level. Here the Fourier spectral collocation approximation to the first, second and third order partial derivatives are given by
\begin{eqnarray}
\left(\mathcal D_{M_x}g\right)_{i,j}&=&\sum\limits_{l,m=-M}^M(l i)\hat g^c_{l,m}e^{ i(lx_i+my_j)},\\
\left(\mathcal D^2_{M_x}g\right)_{i,j}&=&\sum\limits_{l,m=-M}^M(-l^2)\hat g^c_{l,m}e^{ i(lx_i+my_j)}\\
\left(\mathcal D^3_{M_x}g\right)_{i,j}&=&\sum\limits_{l,m=-M}^M(-l^3i)\hat g^c_{l,m}e^{ i(lx_i+my_j)}.
\end{eqnarray}
The differentiation operators $\mathcal D_{M_y}$, $\mathcal D^2_{M_y}$ and $\mathcal D^3_{M_y}$ could be defined in the same fashion.

For any given periodic grid functions $\phi$ and $\psi$, the spectral approximations to the $L^2$ inner product and $L^2$ norm are introduced as
\begin{eqnarray}
\|\phi\|_{h}=\sqrt{( \phi,\phi)_h}\quad {\hbox{with}}\quad (\phi,\psi)_h=h^2\sum\limits_{i,j=0}^{2M}\phi_{i,j}\psi_{i,j}.
\end{eqnarray}
%The following formulas of summation by parts have been shown to be valid at the discrete level (see, e.g., []):
%\begin{eqnarray}
%\left\langle \phi,\nabla_M\cdot\begin{pmatrix}\psi_1\\
%\psi_2\end{pmatrix}\right\rangle&=&-\left\langle \nabla_M\phi,\begin{pmatrix}\psi_1\\
%\psi_2\end{pmatrix}\right\rangle,
%\qquad \left\langle \phi,\Delta_M\psi\right\rangle=-\left\langle \nabla_M\phi,\nabla_M\psi\right\rangle,\\
% \left\langle \phi,\Delta^2_M\psi\right\rangle&=&\left\langle \Delta_M\phi,\Delta_M\psi\right\rangle.
%\end{eqnarray}
\subsection{Numerical experiments} We consider a uniform space grid with $M_x=M_y=80$ and a uniform time grid $\mathcal J$ with $k=0.1$. Let $f(u)=u^3-u$ and $\varepsilon=0.1$. Obviously, the condition (\ref{eq4.78}) is satisfied. To support our uniform estimates which are independent of the initial data, we consider three different initial values:
\begin{enumerate}
\item[(i)] Initial value I: \begin{eqnarray}
u_0(x,y)=\left\{\begin{array}{ll} 1,&\qquad {\hbox{if}} ~~~~(x-\pi)^2+(y-\pi)^2<1,\\
-1,&\qquad {\hbox{others}};
\end{array}\right.
\end{eqnarray}
\item[(ii)] Initial value II ((2.29) in \cite{Xu18}):
\begin{eqnarray}u_0(x,y)=\tanh\left(\frac{\sqrt{x^2+y^2}-0.17}{\sqrt{2}\varepsilon}\right);
\end{eqnarray}
\item[(iii)] initial value III: \begin{eqnarray}u_0(x,y)=\sin(4x)\cos(3y).
\end{eqnarray}
\end{enumerate}
We perform the computation up to $T=100$ and examine the time evolution of the solution by recording its discrete $H^s$ ($s=1,2,3$) norms, namely, $\|\nabla_M u\|_h$,  $\|\Delta_M u\|_h$, and $\|\nabla_M\Delta u\|_h$, respectively. The discrete $H^1$ norm $\|\nabla_M \omega\|_h$ of the chemical potential $\omega$ is also computed. These time evolution plots are presented in Fig. \ref{fig:1}. A clear observation shows that, for all three initial data, all three norms of the solution $u$ and the discrete $H^1$ norm $\|\nabla_M \omega\|_h$ of the chemical potential $\omega$ are globally in time bounded. Also, these global in time bounds are expected to be valid for even a larger time scale. This numerical result provides strong evidence of the long time uniform estimates given in this paper.
\begin{figure}
\centering
\subfigure[The discrete $H^1$ norm of $u$.]{
\label{fig:a1} %% label for first subfigure
\includegraphics[height=5cm,width=5.5cm]{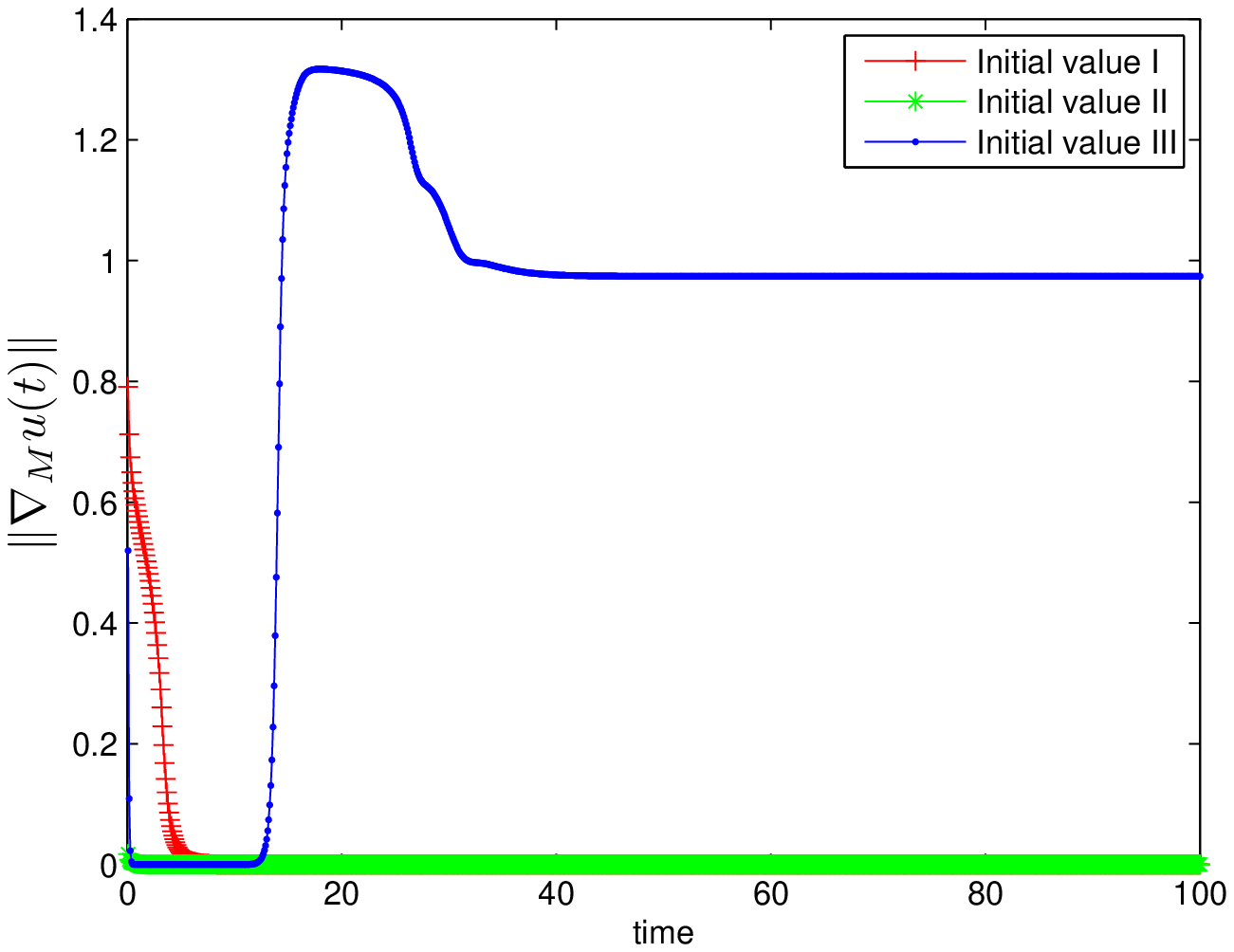}}
\subfigure[The discrete $H^2$ norm of $u$.]{
\label{fig:b1} %% label for second subfigure
\includegraphics[height=5cm,width=5.5cm]{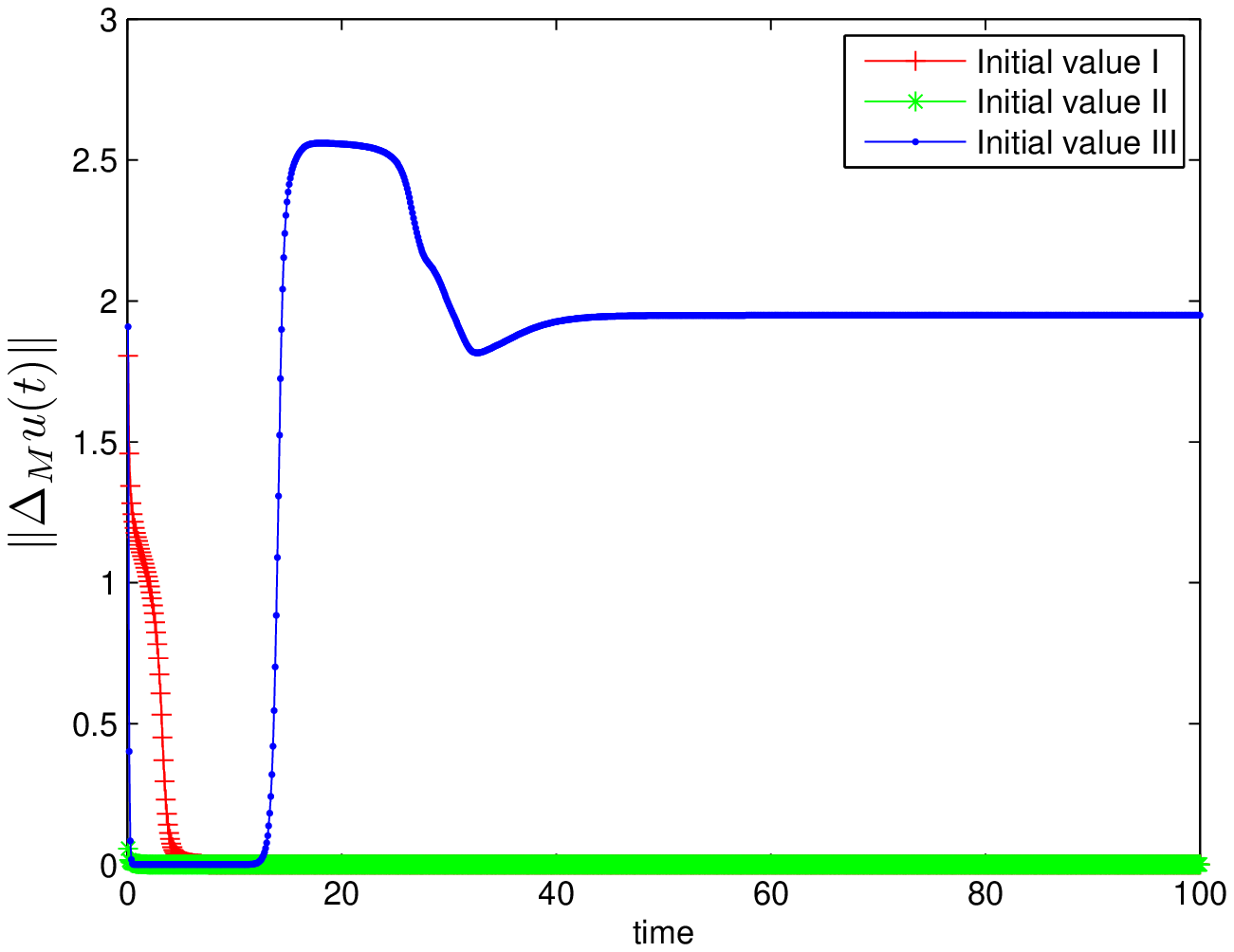}}
\subfigure[The discrete $H^3$ norm of $u$.]{
\label{fig:c1} %% label for first subfigure
\includegraphics[height=5cm,width=5.5cm]{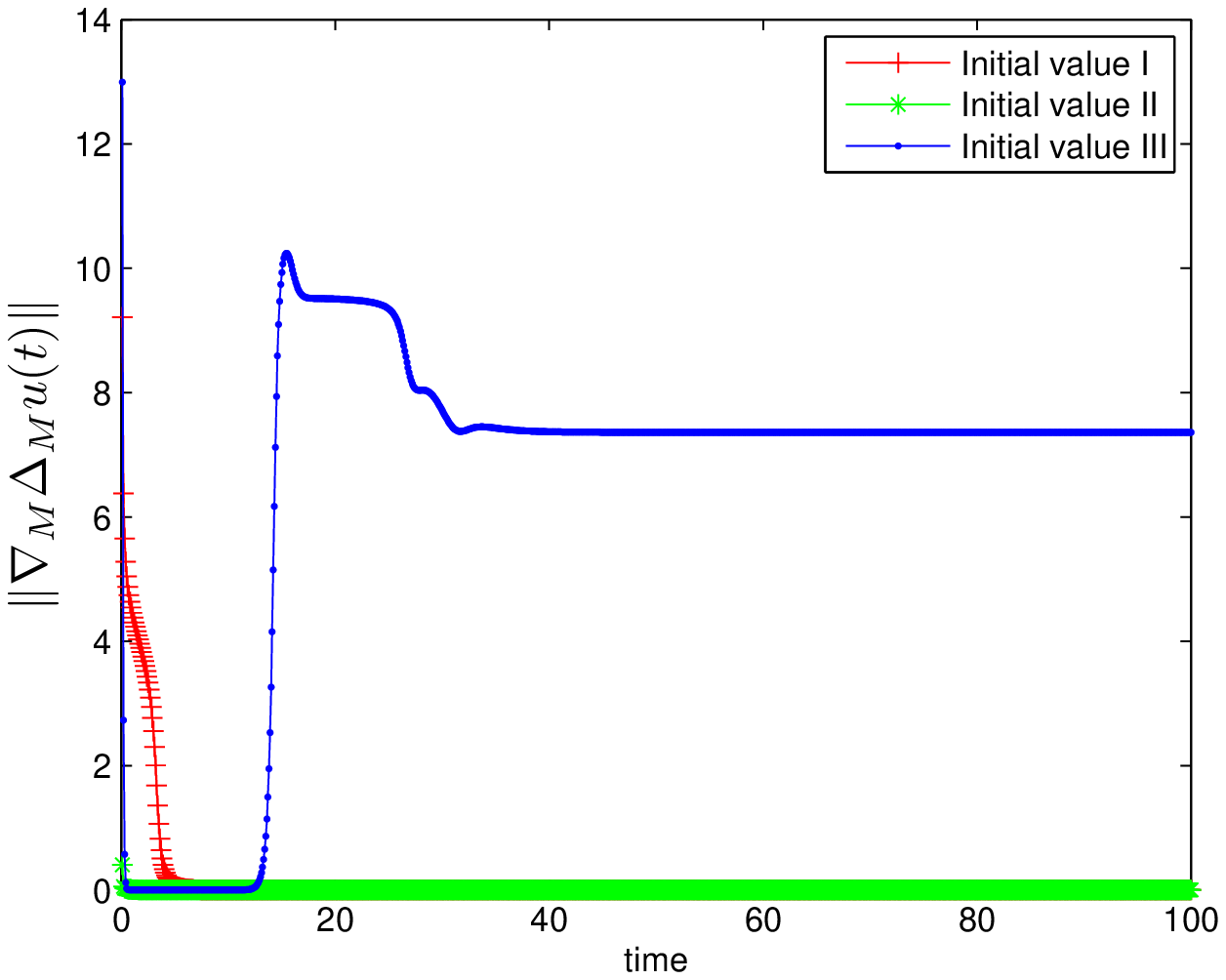}}
\subfigure[The discrete $H^1$ norm of $\omega$. ]{
\label{fig:d1} %% label for second subfigure
\includegraphics[height=5cm,width=5.5cm]{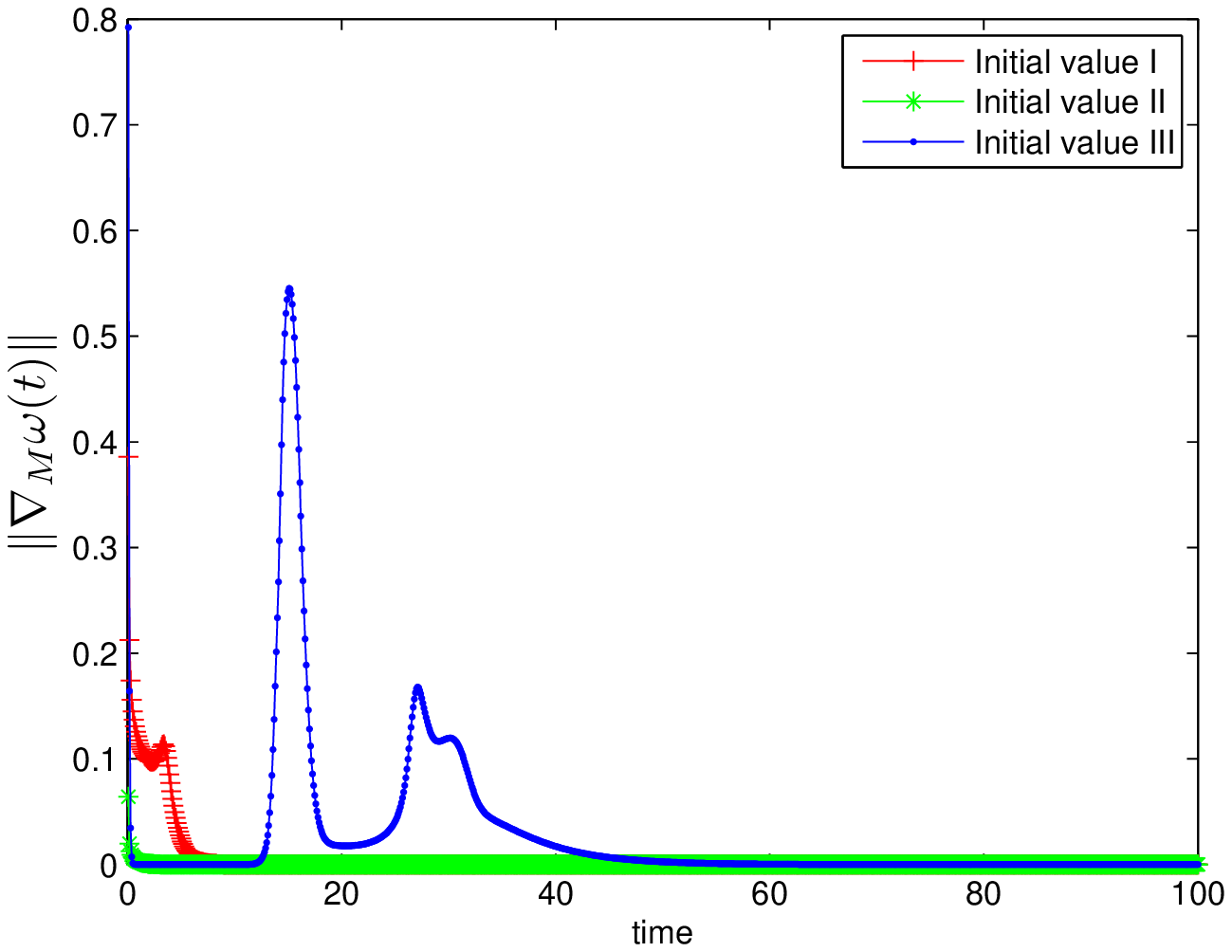}}
\caption{Time evolution of the solution $u$ and the chemical potential $\omega$, where $k=0.1,~\varepsilon=0.1$. }
\label{fig:1} %% label for entire figure
\centering
\end{figure}

Fig. \ref{fig:2} shows the different initial values and the evolutions of the numerical solutions
at different $t$'s, using the implicit Euler scheme for time discretization. It can be observed that the solutions
in all these cases eventually tend to be steady.
\begin{figure}
\centering
\subfigure[Initial value I, $t=0$.]{
\label{fig2:a1} %% label for first subfigure
\includegraphics[height=3cm,width=3.5cm]{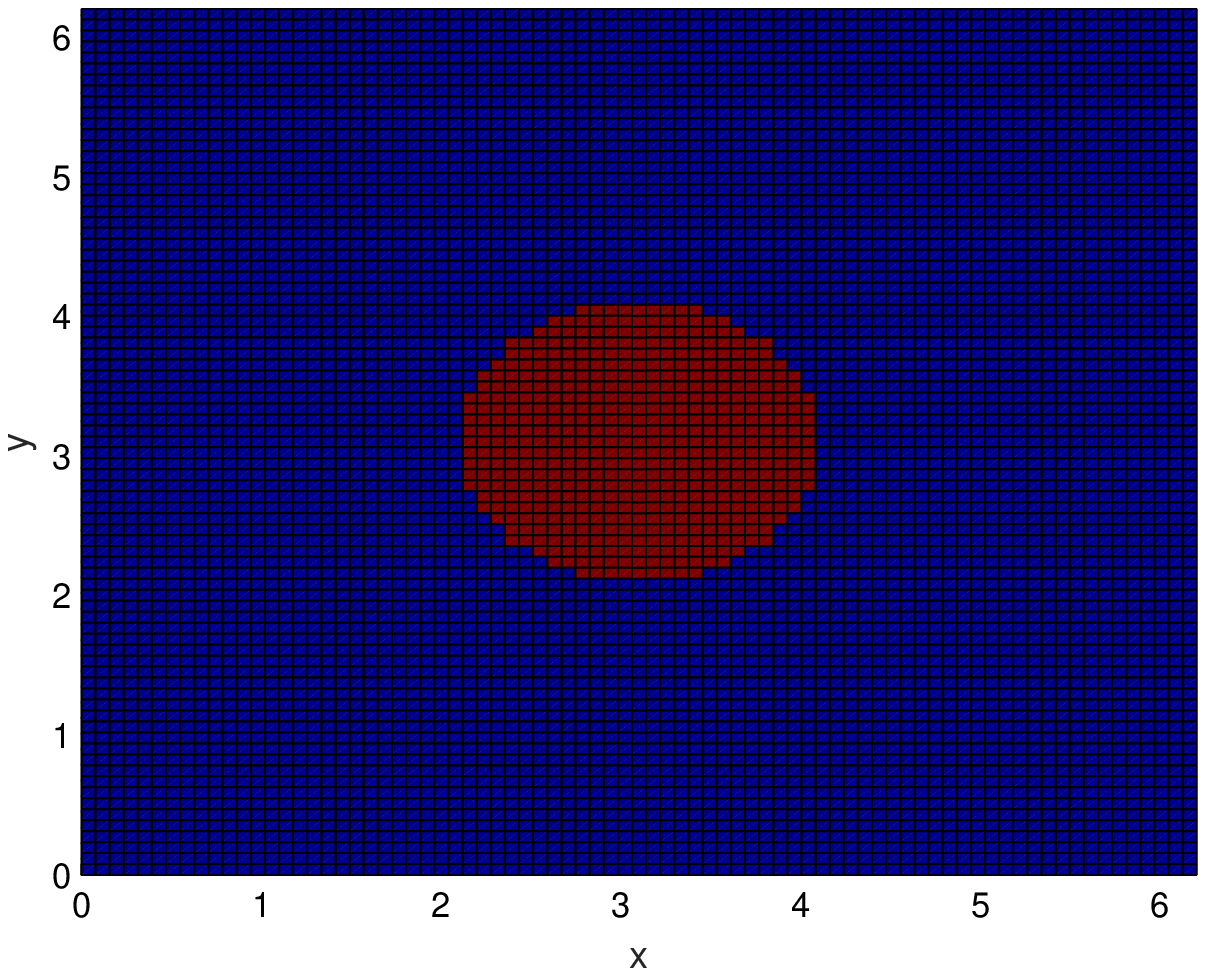}}
\subfigure[Initial value II, $t=0$. ]{
\label{fig2:b1} %% label for second subfigure
\includegraphics[height=3cm,width=3.5cm]{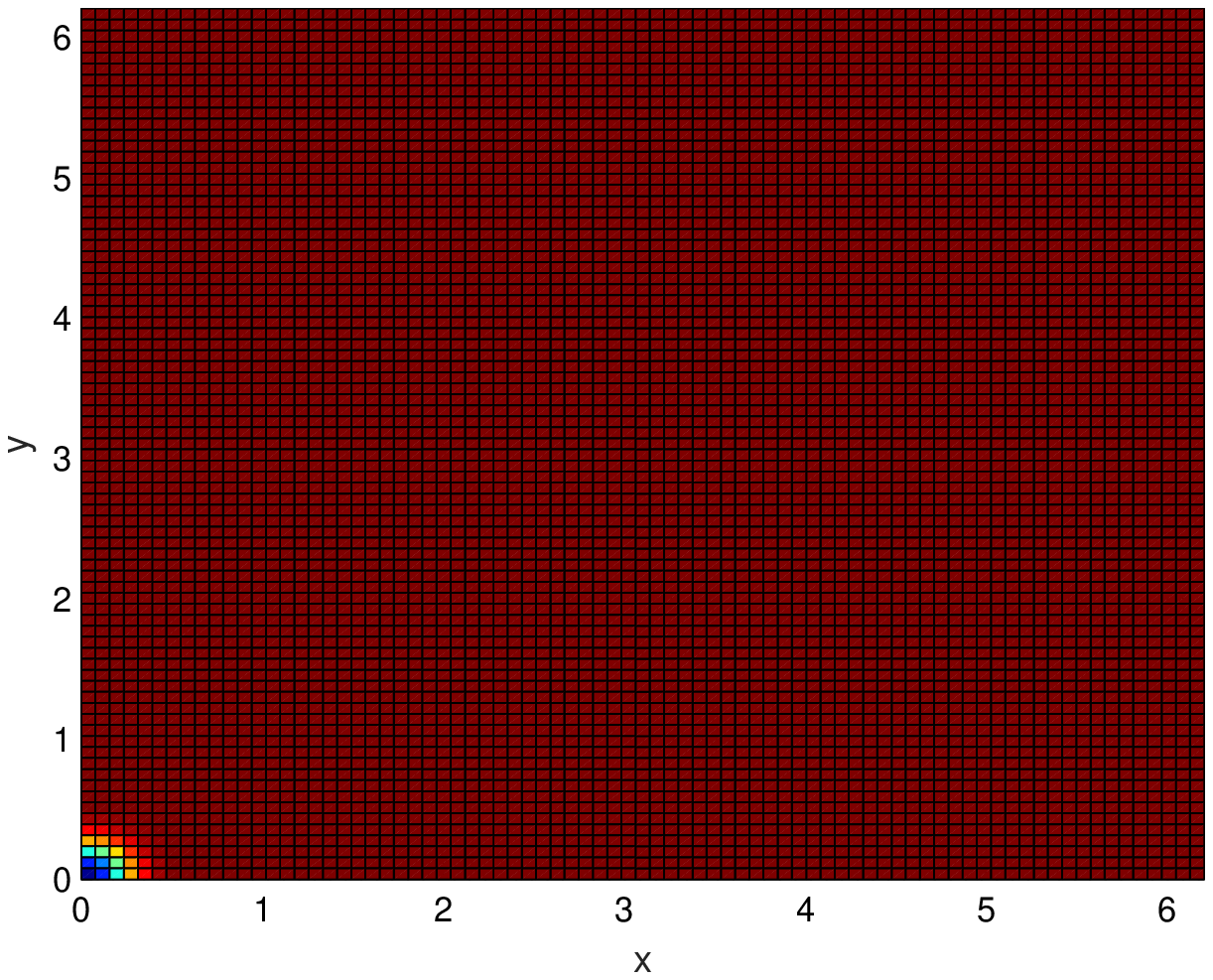}}
\subfigure[Initial value III, $t=0$.]{
\label{fig2:c1} %% label for first subfigure
\includegraphics[height=3cm,width=3.5cm]{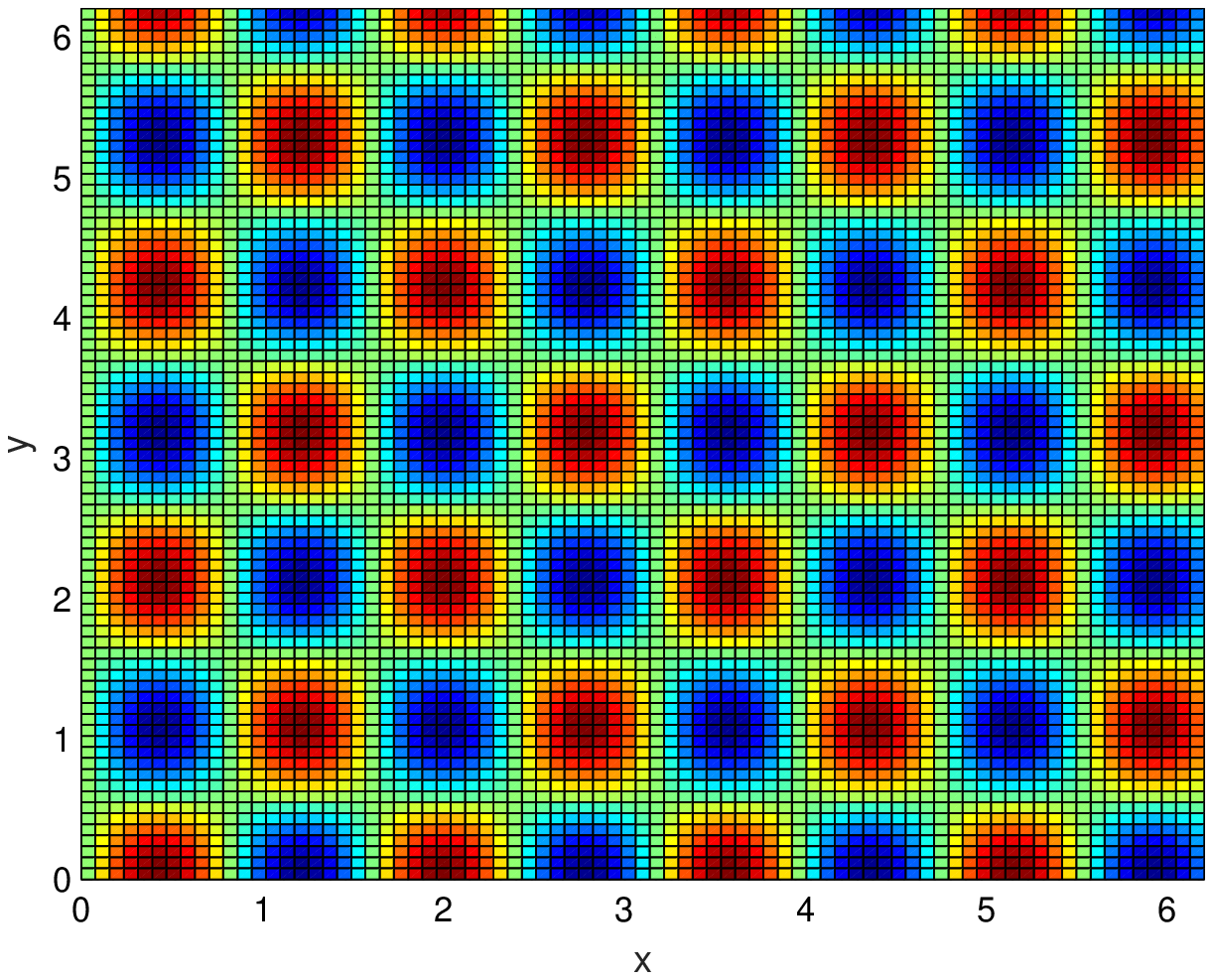}}
\subfigure[Initial value I, $t=10$. ]{
\label{fig2:a2} %% label for second subfigure
\includegraphics[height=3cm,width=3.5cm]{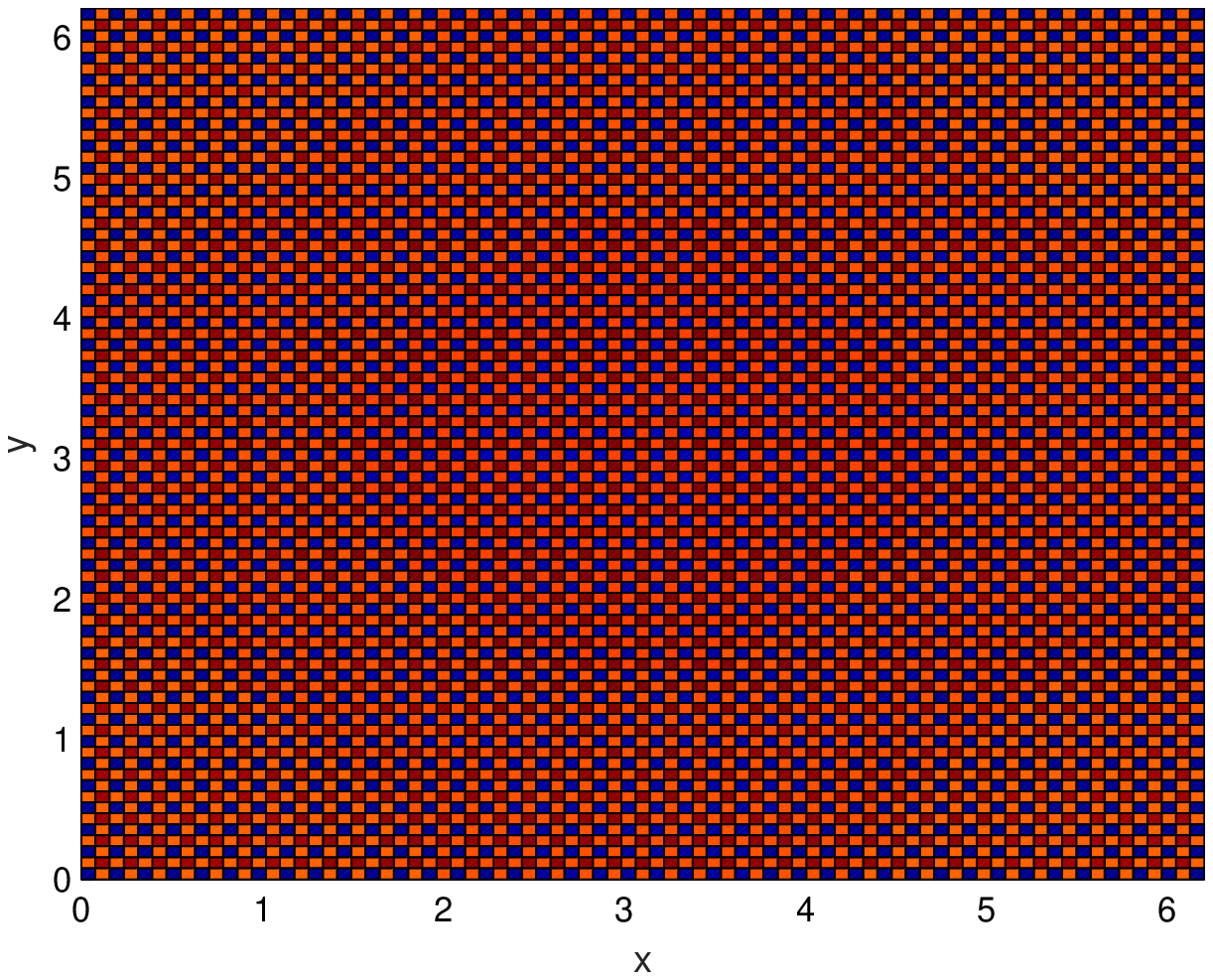}}
\subfigure[Initial value II, $t=10$.]{
\label{fig2:b2} %% label for first subfigure
\includegraphics[height=3cm,width=3.5cm]{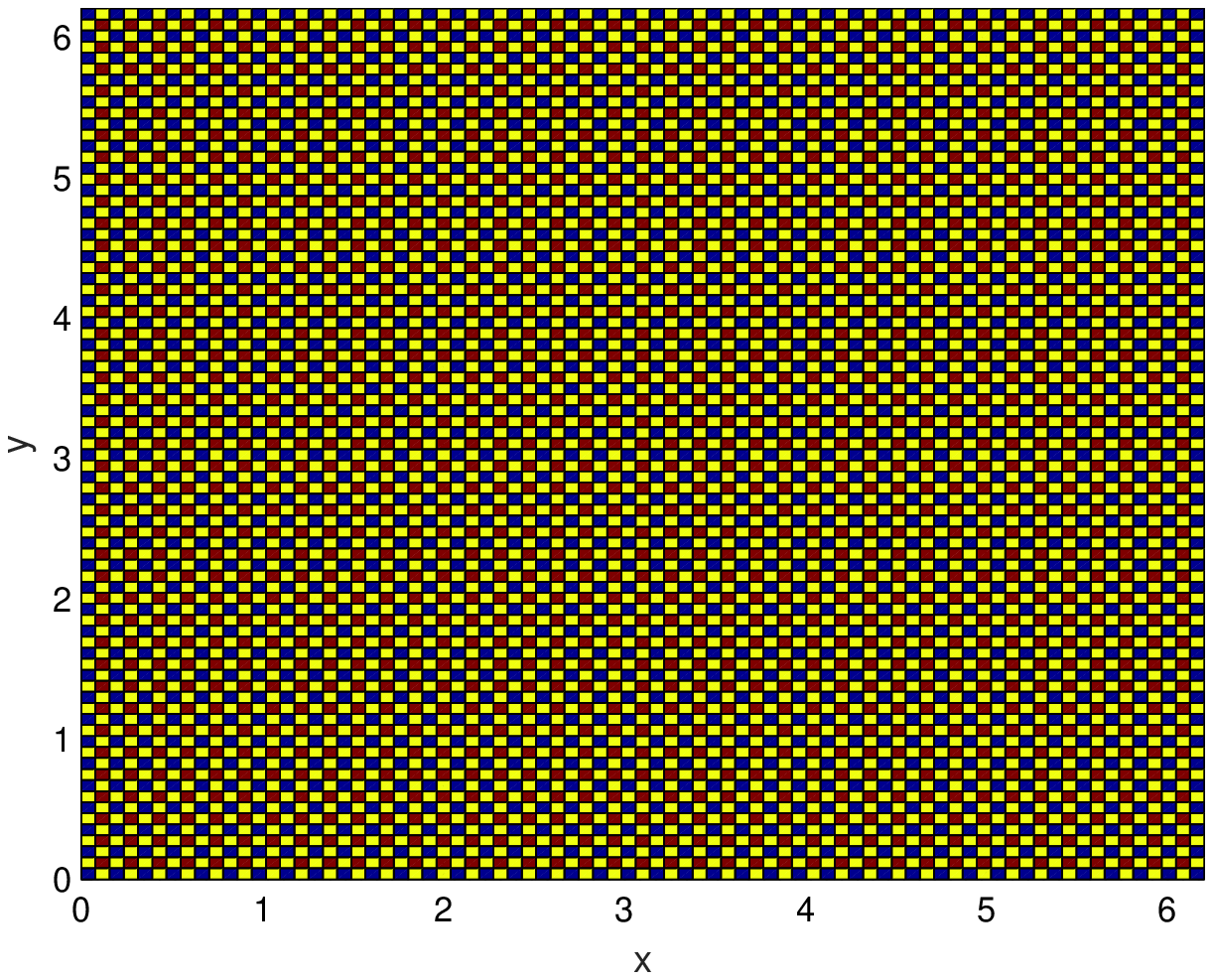}}
\subfigure[Initial value III, $t=10$. ]{
\label{fig2:c2} %% label for second subfigure
\includegraphics[height=3cm,width=3.5cm]{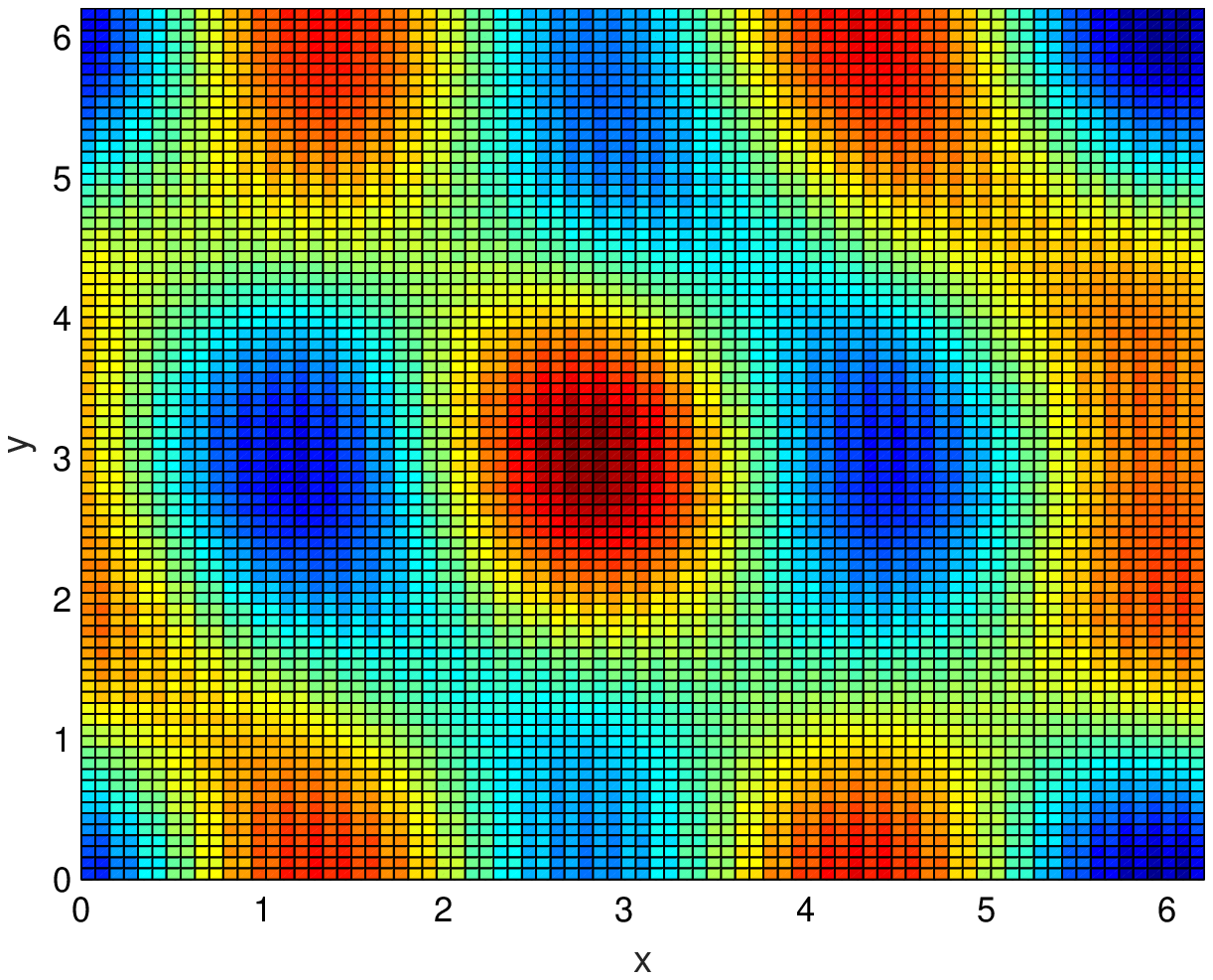}}
\subfigure[Initial value I, $t=50$.]{
\label{fig2:a3} %% label for first subfigure
\includegraphics[height=3cm,width=3.5cm]{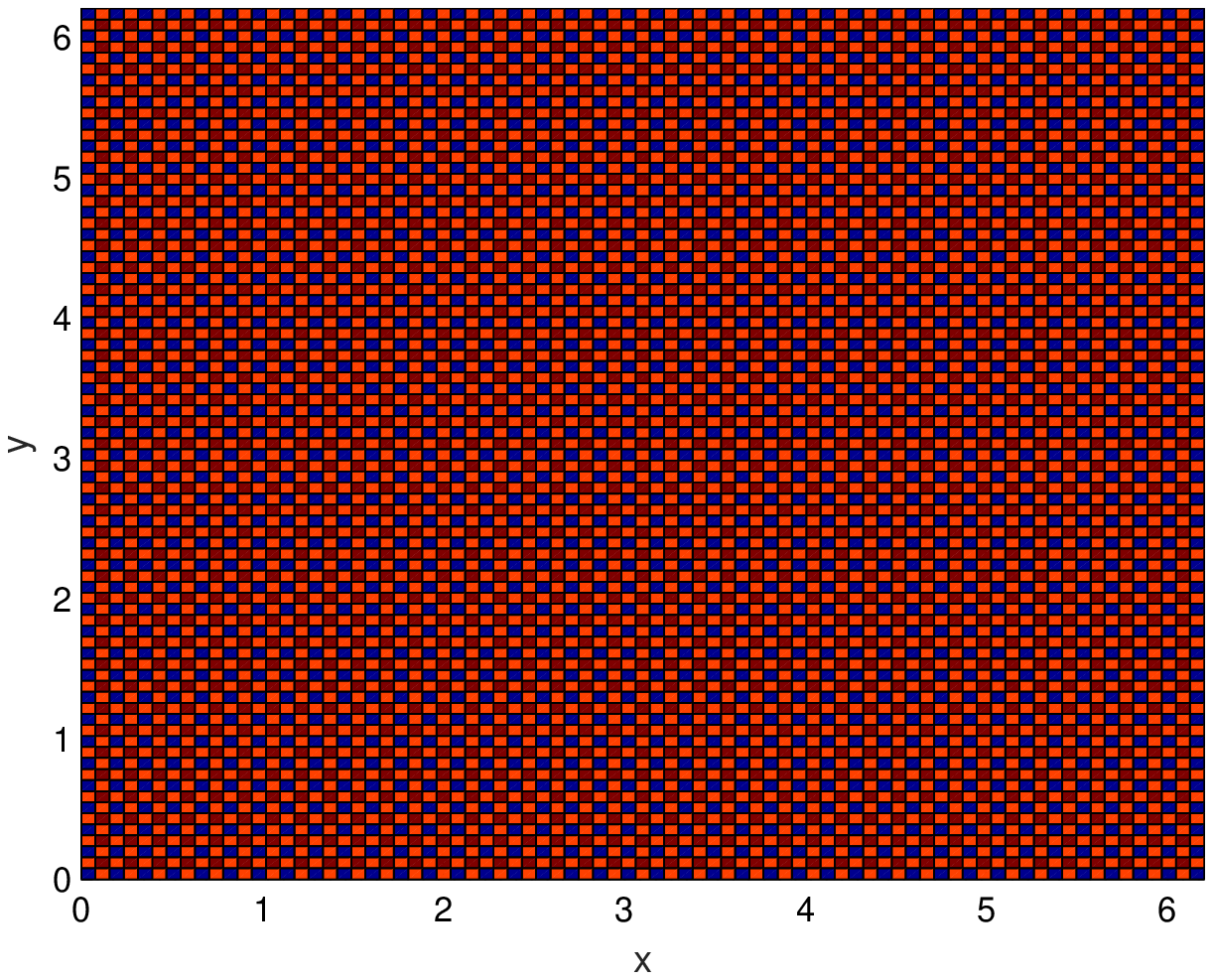}}
\subfigure[Initial value II, $t=50$. ]{
\label{fig2:b3} %% label for second subfigure
\includegraphics[height=3cm,width=3.5cm]{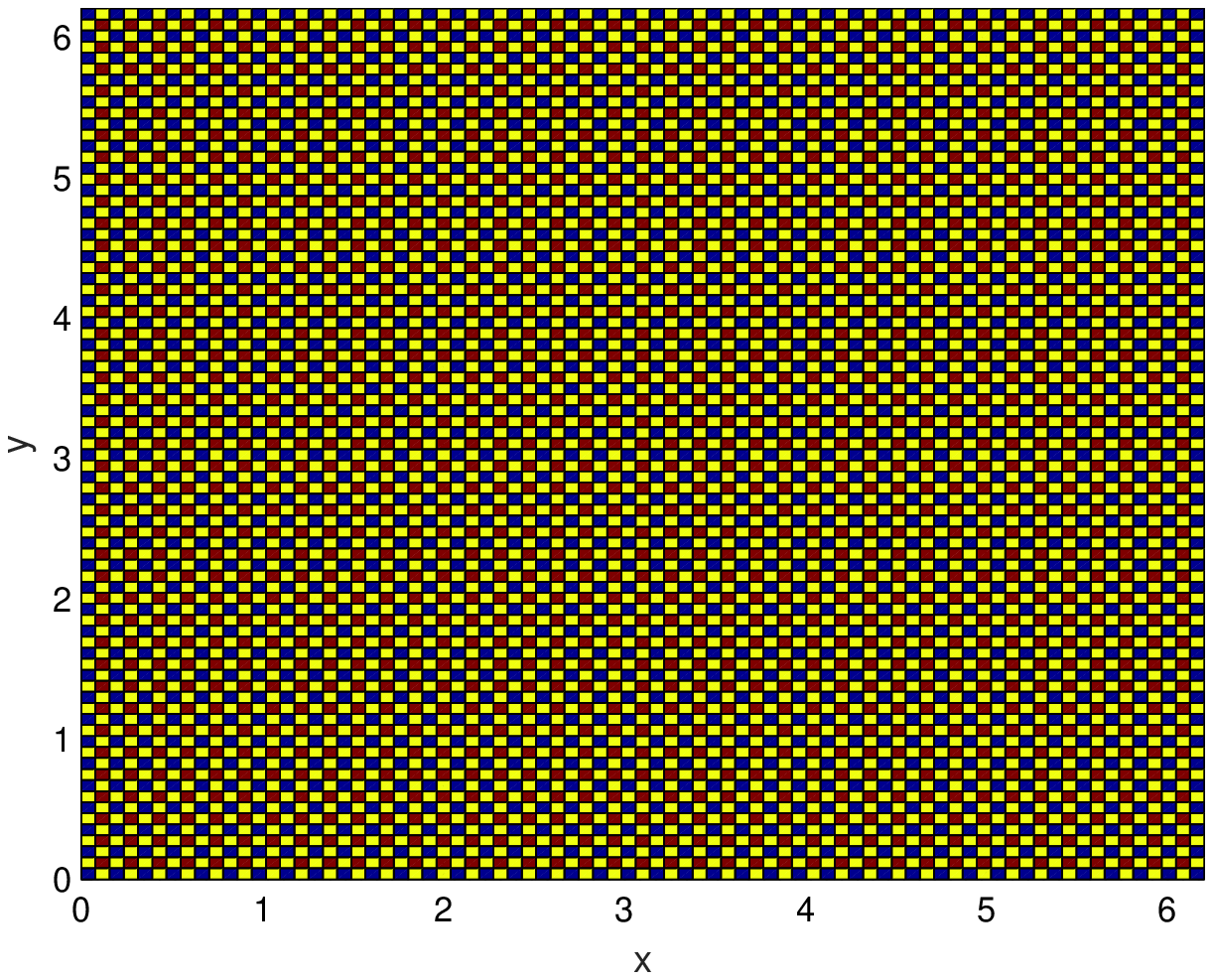}}
\subfigure[Initial value III, $t=50$. ]{
\label{fig2:c3} %% label for second subfigure
\includegraphics[height=3cm,width=3.5cm]{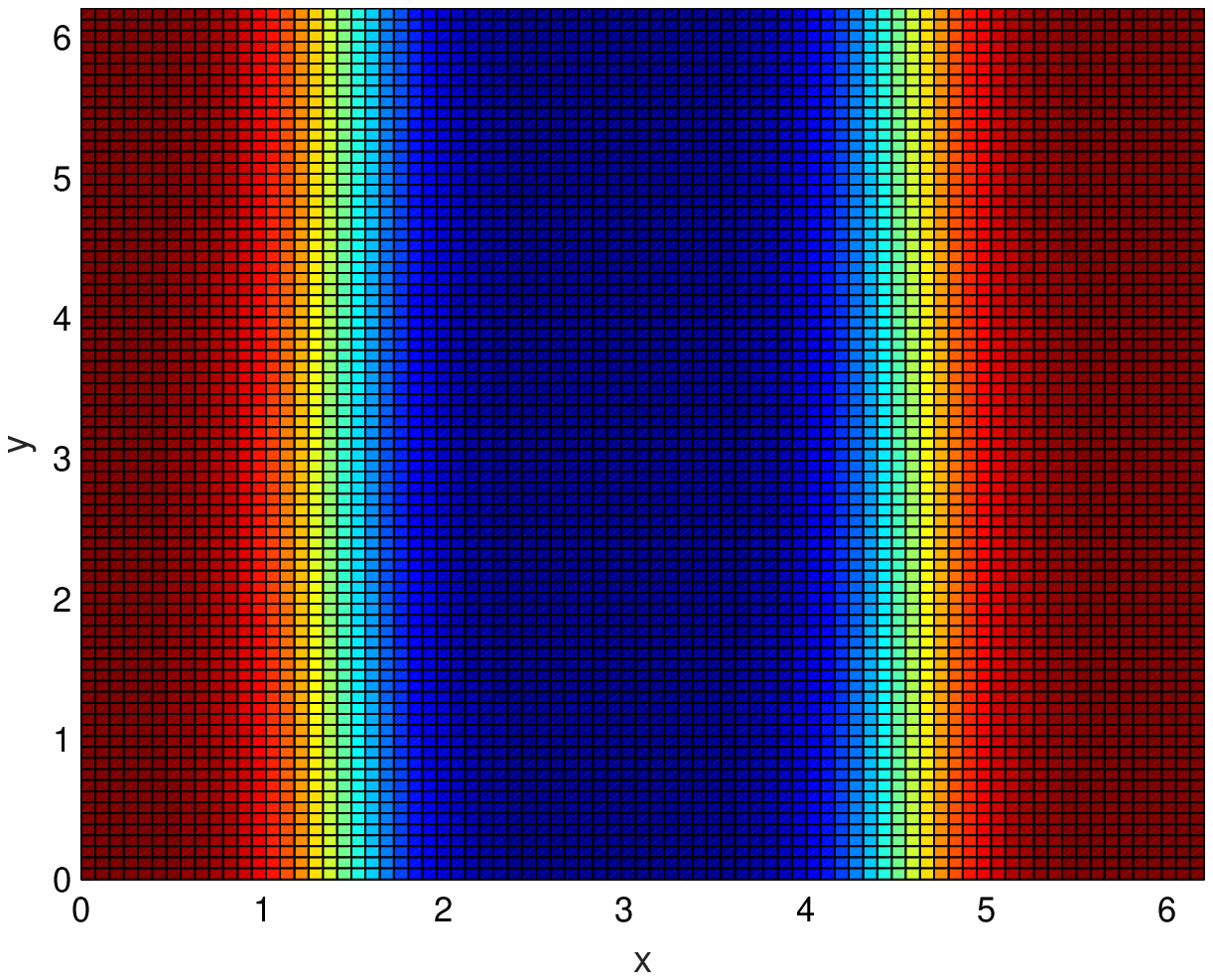}}
\subfigure[Initial value I, $t=100$.]{
\label{fig2:a4} %% label for first subfigure
\includegraphics[height=3cm,width=3.5cm]{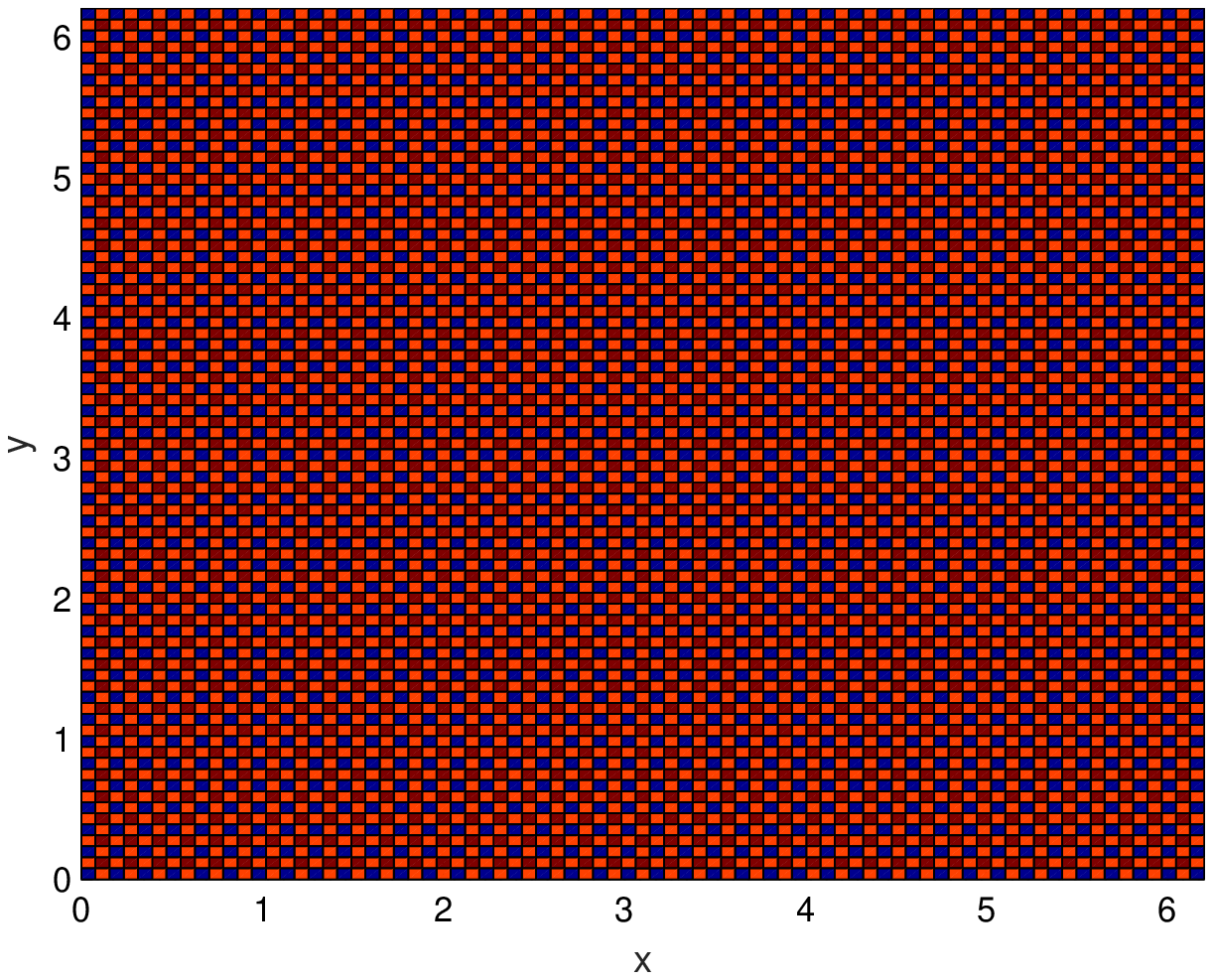}}
\subfigure[Initial value II, $t=100$. ]{
\label{fig2:b4} %% label for second subfigure
\includegraphics[height=3cm,width=3.5cm]{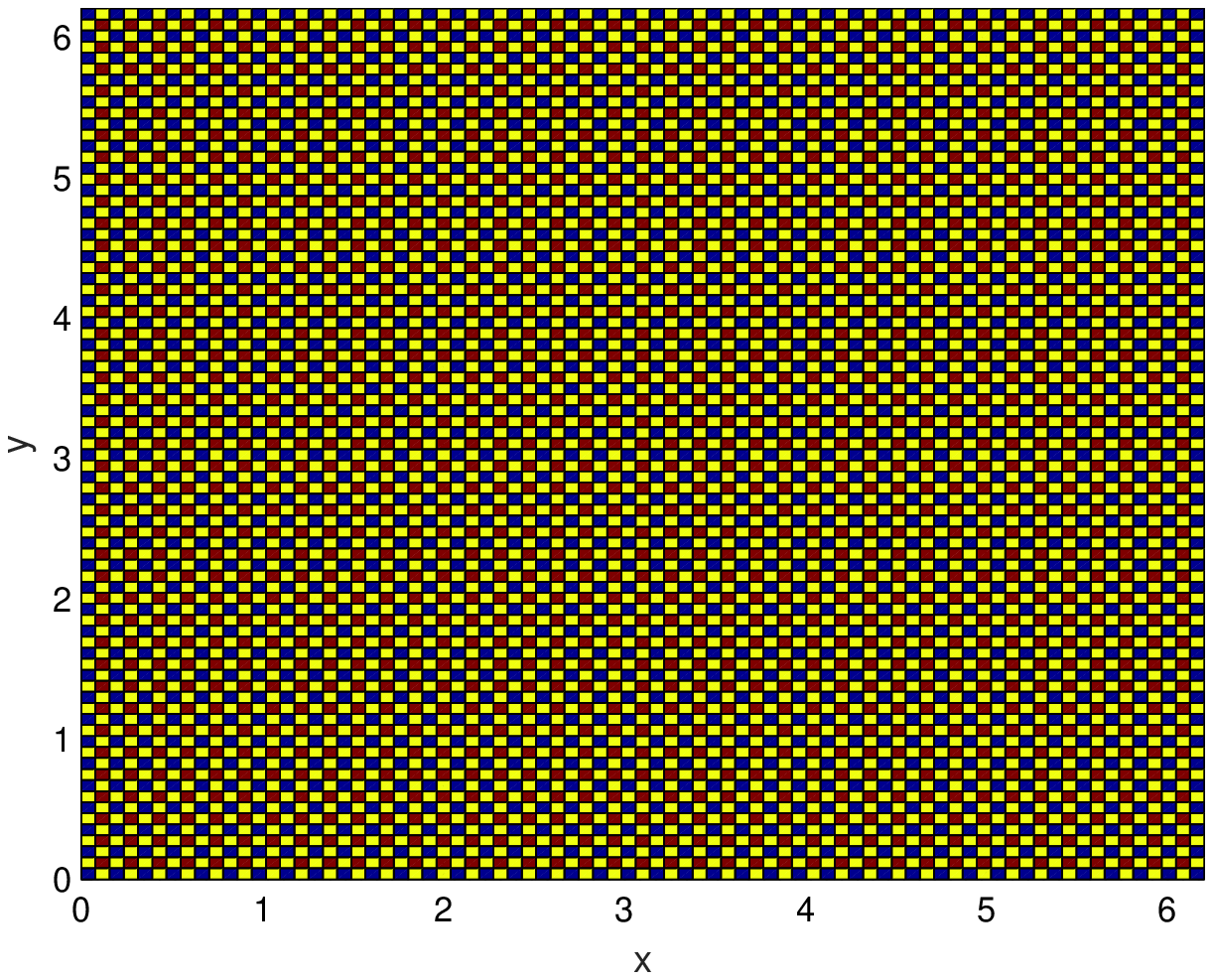}}
\subfigure[Initial value III, $t=100$. ]{
\label{fig2:c4} %% label for second subfigure
\includegraphics[height=3cm,width=3.5cm]{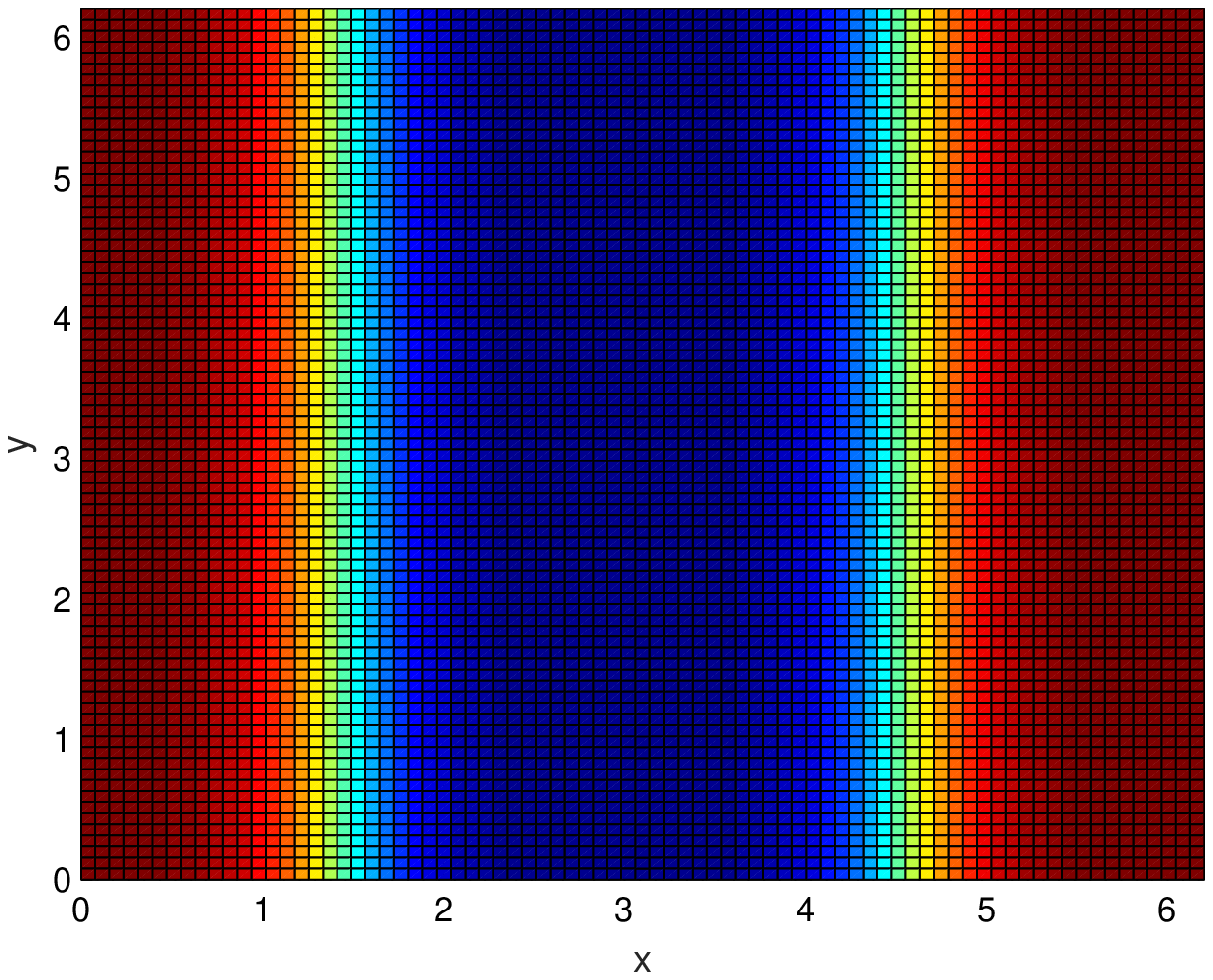}}
\caption{The Cahn-Hilliard equation with different initial values: plot of the numerical solutions at different $t$'s, where $k=0.1$, $\varepsilon=0.1$. }
\label{fig:2} %% label for entire figure
\centering
\end{figure}

\section{Conclusions} \label{5}
In this paper we focused on dissipative systems generated by Cahn-Hilliard equation and time discretization since long time approximation seems to be the key issue involved. We provide a long time numerical stability analysis for the implicit Euler scheme for Cahn-Hilliard equation with polynomial nonlinearity; uniform in time estimates for this scheme, in $H^{-1}$ and $H^s (s=1,2,3)$ norms, are derived. In addition, we show the uniform bound for the time semi-discrete chemical potential $\omega^{n+1}$ in $H^1$ norm with the aid of the variable steps-sizes uniform discrete Gronwall lemma. As a consequence of these estimates, for every time grid, we build a global attractor of the discrete-in-time dynamical system. The numerical results obtained by this scheme, combined with Fourier pseudospectral spatial approximation, have also verified such a long time stability.

%\section*{Acknowledgments}
%The author thanks the anonymous authors whose work largely
%constitutes this sample file. He also thanks the INFO-TeX mailing
%list for the valuable indirect assistance he received.

\end{document}